\newcommand{\comment}[1]{} 
\def\N{\mathbb{N}} 
\def\Z{\mathbb{Z}}
\def\S{\mathbb{S}}
\def\topdf{\texorpdfstring}
\theoremstyle{plain}
\newtheorem{thm}[equation]{Theorem}
\newtheorem{lem}[equation]{Lemma}
\newtheorem{coro}[equation]{Corollary} 
\newtheorem{prop}[equation]{Proposition}
\theoremstyle{definition}
\newtheorem{defi}[equation]{Definition} 
\newtheorem{ex}[equation]{Example}
\newtheorem{exas}[equation]{Examples}
\newtheorem{stan}[equation]{Standing assumption}
\newtheorem{lambada}[equation]{$\lambda$-assumption}
\theoremstyle{remark} 
 \newtheorem{rem}[equation]{Remark}
  \numberwithin{equation}{section}
\newcommand{\cE}{\mathcal E}
\newcommand{\sH}{\mathscr H}
\newcommand{\cM}{\mathcal M}
\newcommand{\cO}{\mathcal O}
\newcommand{\cP}{\mathcal P}
\newcommand{\cU}{\mathcal U}
\newcommand{\sU}{\mathscr U}
\newcommand{\cUH}{\sU\sH}
\newcommand{\cV}{\mathcal V}
\newcommand{\sV}{\mathscr V}
\newcommand{\cVH}{\sV\sH}
\def\fA{\mathfrak{A}}
\def\fC{\mathfrak{C}}
\def\fD{\mathfrak{D}}
\def\fS{\mathfrak{S}}
\def\fT{\mathfrak{T}}
\def\fU{\mathfrak{U}}
\def\sets{\mathfrak{Set}}
\def\Mod{\operatorname{Mod}}
\def\map{\operatorname{map}}
\def\ind{\operatorname{Ind}}
\def\pro{\operatorname{Pro}}
\def\indu{\operatorname{ind}}
\def\inv{\operatorname{inv}}
\def\res{\operatorname{res}}
\def\can{\operatorname{can}}
\def\hyp{\operatorname{hyp}}
\def\forg{\operatorname{forg}}
\newcommand{\alg}{\rm Alg}
\newcommand{\ring}{\rm Ring}
\newcommand{\ringa}{\ring^*}
\newcommand{\aha}{{{\rm Alg}_\ell}}
\newcommand{\ahas}{{{\rm Alg}^*_\ell}}
\newcommand{\lra}{\longrightarrow}
\newcommand{\iso}{\overset{\cong}{\lra}}
\newcommand{\weq}{\overset{\sim}{\lra}}
\def\triqui{\vartriangleleft}
\def\supp{\operatorname{supp}}
\def\inc{\operatorname{inc}}
\def\can{\operatorname{can}}
\def\diag{\operatorname{diag}}
\def\Gl{\operatorname{GL}}
\def\GL{\Gl}
\def\ad{\operatorname{ad}}
\def\ev{\operatorname{ev}}
\def\id{\operatorname{id}}
\def\iotap{\iota_{+}}
\newcommand{\coker}{{\rm Coker}}
\renewcommand{\ker}{{\rm Ker}}
\newcommand{\im}{\mathrm{Im}}
\newcommand{\op}{\mathrm{op}}
\DeclareMathOperator*{\colim}{colim}
\def\otimesl{\otimes_\ell}
\DeclareMathOperator{\sd}{\rm{sd}}
\newcommand{\sdi}{\sd^\bullet}
\def\ialgl{Alg^*_\ell}
\begin{document}
\hfuzz=22pt
\vfuzz=22pt
\hbadness=2000
\vbadness=\maxdimen

\author{Guillermo Corti\~nas and Santiago Vega}
\title{Bivariant Hermitian K-theory and Karoubi's fundamental theorem}

\begin{abstract} Let $\ell$ be a commutative ring with involution $*$ containing an element $\lambda$ such that $\lambda+\lambda^*=1$ and let $\ahas$ be the category of $\ell$-algebras equipped with a semilinear involution and involution preserving homomorphisms. 
We construct a triangulated category $kk^h$ and a functor $j^h:\ahas\to kk^h$ that is homotopy invariant, matricially and hermitian stable and excisive and is universal initial with these properties. We prove that a version of Karoubi's fundamental theorem holds in $kk^h$. By the universal property of the latter, this implies that any functor $H:\ahas\to\fT$ with values in a triangulated category which is homotopy invariant, matricially and hermitian stable and excisive satisfies the fundamental theorem. We also prove a bivariant version of Karoubi's $12$-term exact sequence. 
\end{abstract}
\maketitle

\section{Introduction}\label{sec:intro}
Let $\ell$ be a commutative ring with involution $*$. Assume that $\ell$ contains an element $\lambda$ such that
\begin{equation}\label{intro:lambda}
\lambda+\lambda^*=1.
\end{equation}
A \emph{$*$-algebra} over $\ell$ is an algebra $A$ equipped with a semilinear
involution $*:A\to A^{\op}$. Write $\aha$ for the category of $\ell$-algebras
and $\ahas$ for the subcategory of $*$-algebras and involution preserving
homomorphisms. We construct a triangulated category $kk^h$, with the same
objects as $\ahas$, and a functor $j^h:\ahas\to kk^h$ which is the identity on
objects and is homotopy invariant, matricially and hermitian stable and excisive
(the first three terms are defined in Subsection \ref{subsec:stab} and the last 
in Definition \ref{def:ex-ho-theory}),
and is universal initial with these properties (see Proposition \ref{prop:kkuniv}). We write $[-1]$ for the suspension functor, and for $n\in\Z$, consider the \emph{bivariant hermitian $K$-theory} groups 
\[
kk^h_n(A,B):=\hom_{kk^h}(j^h(A),j^h(B)[n]),\,\, kk^h(A,B)=kk_0^h(A,B).
\]
Setting $A=\ell$ we recover a Weibel style \cite{kh}, homotopy invariant version of $K^h$, the $K$-theory of hermitian forms of \cite{karfund}. We prove in Proposition \ref{prop:agree} that
\begin{equation}\label{intro:kkkh}
kk_n^h(\ell,A)=KH^h_n(A)\,\, (n\in\Z). 
\end{equation}
The triangulated category $kk^h$ is related to the bivariant $K$-theory category $kk$ of \cite{ct} by means of a pair of functors
\begin{equation}\label{intro:indres}
\res:kk^h\leftrightarrows kk:\indu
\end{equation}
which are both left and right adjoint to each other (Proposition \ref{prop:adjunction}). There is a $*$-algebra $\Lambda$ such that for $\Lambda A=\Lambda\otimesl A$ we have $j^h\circ\Lambda= \indu\circ\res\circ j^h$. Usual homotopy $K$-theory is recovered from $KH^h$ via 
\begin{equation}\label{intro:agree}
KH_*(A)=KH^h_*(\Lambda A). 
\end{equation}
Under the isomorphisms \eqref{intro:kkkh} and \eqref{intro:agree}, the unit and counit maps 
\[
\eta_A\in kk^h(A,\Lambda A) \text{ and }\phi_A\in kk^h(\Lambda A, A)
\]
correspond, respectively, to the forgetful and hyperbolic maps. There are $*$-algebras $U,V\in \ahas$ such that for $UA=U\otimesl A$ and $VA=V\otimesl A$ we have natural transformations $\eta:UA\to \Lambda A$ and $\bar{\phi}:j^h(VA)\to j^h(A)$ which fit into distinguished triangles
\begin{equation}\label{intro:phieta}
\xymatrix{
j^h(U A)\ar[r]& \Lambda j^h(\Lambda A)\ar[r]^{\bar{\phi}}& j^h(A)\ar[r]& j^h(UA)[-1]\\
j^h(VA)\ar[r]& j^h(A)\ar[r]^{\eta}& j^h(\Lambda A)\ar[r]&j^h(VA)[-1].}
\end{equation}
In particular, for $\cVH_*(A):=kk^h_*(\ell, VA)$ and $\cUH_*(A):=kk_*^h(\ell,UA)$, we have long exact sequences
\[
\xymatrix{
\cUH_n(A)\ar[r]& KH_n(A)\ar[r]^{\hyp}& KH^h_n(A)\ar[r]& \cUH_{n-1}(A)\\
\cVH_n(A)\ar[r]& KH_n^h(A)\ar[r]^{\forg}& KH_n(A)\ar[r]&\cVH_{n-1}(A).}
\]
An invertible element $u$ in a unital $*$-algebra is \emph{unitary} if $u^*=u^{-1}$. For unitary $\epsilon\in\ell$  let ${}_\epsilon M_2$ be the algebra of $2\times 2$-matrices with coefficients in $\ell$ equipped with the involution
\[
\begin{bmatrix}a&b\\ c& d\end{bmatrix}^*=\begin{bmatrix}d^*&\epsilon b^*\\ \epsilon^*c^*& a^*\end{bmatrix}
\]
For $A\in\ahas$, we write ${}_\epsilon M_2A={}_\epsilon M_2\otimesl A$. 
A main result of this article is Corollary \ref{coro:bivafund}, which establishes the following.
\begin{thm}\label{intro:bivafund}
Assume that $\ell$ contains an element $\lambda$ satisfying \eqref{intro:lambda}. Then there is a natural isomorphism
\[
j^h({}_{\epsilon} M_2 V(A))\cong j^h({}_{-\epsilon} M_2 U(A))[1]\quad (A\in\ahas).
\]
\end{thm}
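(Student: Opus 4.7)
My plan is to reduce the statement to the coefficient case $A=\ell$ and then derive it from the ambidextrous adjunction $\res\dashv\ind\dashv\res$ (Proposition~\ref{prop:adjunction}) via the octahedral axiom, using hermitian stability to absorb the $\epsilon\to -\epsilon$ twist. Since $UA=U\otimesl A$ and $VA=V\otimesl A$, and tensoring preserves distinguished triangles in $kk^h$, it suffices to establish a natural isomorphism
\[
j^h({}_\epsilon M_2\otimesl V)\cong j^h({}_{-\epsilon}M_2\otimesl U)[1]
\]
in $kk^h$; naturality in $A$ then follows by tensoring with $j^h(A)$.

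For $A=\ell$, I would combine the two defining triangles by applying the octahedral axiom to the norm map $\phi\circ\eta\colon\ell\to\ell$ (or, dually, $\eta\circ\phi\colon\Lambda\to\Lambda$) arising from the ambidextrous adjunction. This produces a distinguished triangle in $kk^h$ relating $V$, $U[1]$ and the fiber of this composite. The next task is to identify this fiber, after tensoring with ${}_\epsilon M_2$, with a ${}_{-\epsilon}M_2$-stabilized object that becomes contractible in $kk^h$. Concretely, this reduces to showing that ${}_\epsilon M_2\otimesl\Lambda$ and ${}_{-\epsilon}M_2\otimesl\Lambda$ become isomorphic in $kk^h$ via an explicit map built from $\lambda$: a suitable hermitian element of ${}_\epsilon M_2\otimesl\Lambda$ constructed using $\lambda$ implements an inner automorphism whose effect on the form parameter is $\epsilon\mapsto-\epsilon$, modulo correction terms killed by matricial and hermitian stability. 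Combining this twist with the octahedron delivers the claimed isomorphism.

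The main obstacle is the bookkeeping of involutions through the octahedron and the matrix stabilizations. The sign flip $\epsilon\to-\epsilon$ is the signature of Karoubi's fundamental theorem, and realizing it at the level of $*$-algebras rather than K-theory groups is the most delicate step. The hypothesis~\eqref{intro:lambda} is used essentially here: without such a $\lambda$ the required hermitian interpolating element between the two involutions on $M_2$ does not exist, so the $\epsilon\mapsto-\epsilon$ twist cannot be realized by a $*$-algebra construction. The bivariant framework streamlines the rest of the argument, since the distinguished triangles produced by excision in $kk^h$ already encode Karoubi's exact sequence and the universal property of $kk^h$ allows us to work with objects rather than K-theory spectra.
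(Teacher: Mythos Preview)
Your approach has a genuine gap. Applying the octahedral axiom to $\ell\xrightarrow{\eta}\Lambda\xrightarrow{\phi}{}_1M_2$ yields a distinguished triangle
\[
j^h(V)\lra j^h(F)\lra j^h(U)\lra j^h(V)[-1],
\]
with $F=P_{\phi\eta}$. After tensoring with ${}_\epsilon M_2$ all three vertices carry the \emph{same} $\epsilon$; the octahedron never introduces the sign flip $\epsilon\mapsto-\epsilon$. To conclude even $j^h({}_\epsilon M_2V)\cong j^h({}_\epsilon M_2U)[1]$ you would need $j^h({}_\epsilon M_2F)=0$, equivalently that ${}_\epsilon M_2(\phi\eta)$ is an isomorphism in $kk^h$. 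But under $kk^h(\ell,-)\cong KH_0^h$ the map $(\phi\eta)_*$ is ``forget then hyperbolize'', whose cokernel is the Witt group; this is already nonzero for $\ell=\R$ with trivial involution and $\epsilon=1$. The isomorphism ${}_\epsilon M_2\Lambda\cong{}_{-\epsilon}M_2\Lambda$ of \eqref{map:lambda-matrix} is correct but irrelevant here: it lives on the middle object $\Lambda$ of the factorization and does nothing to the fibers $U$, $V$ or to $F$.

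The deeper issue is that the statement is a periodicity theorem---the hermitian analogue of Bott periodicity---and has no purely formal proof from the ambidextrous adjunction and the triangulated axioms. The paper (Theorem~\ref{thm:bivafund} and Corollary~\ref{coro:bivafund}) takes Karoubi's classical result \cite{karfund} as essential input: one imports the element $\theta_0\in{}_{-1}K_2^h((U')^2)$, transports it along the comparison map to $\theta\in kk^h(\ell,{}_{-1}M_2U^2)$, and proves $\theta$ is invertible by reducing, via Proposition~\ref{prop:agree} and Lemma~\ref{lem:prodagree}, to the invertibility of $\theta_0\star-$ on $K^h_*$ of unital $*$-algebras. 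Only then does one combine $\theta$ with the formal identity $j^h(VU)\cong j^h(\ell)[1]$ of Lemma~\ref{lem:sigmavu-l} and with ${}_{-1}M_2({}_\epsilon M_2)\cong M_\pm({}_{-\epsilon}M_2)$ to obtain the stated isomorphism. The construction and invertibility of $\theta_0$ is precisely the non-formal ingredient your outline omits.
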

Applying $kk^h_*(\ell,-)$ to the isomorphism of Theorem \ref{intro:bivafund} and writing ${}_\epsilon \cVH_*$ and ${}_\epsilon \cUH_*$ for $KH_*$ of ${}_{\epsilon} M_2U$ and ${}_{\epsilon} M_2V$, we obtain
\begin{equation}\label{intro:fundkh}
{}_\epsilon \cVH_*(A)={}_{-\epsilon} \cUH_{*+1}(A).
\end{equation}
We remark that because $j^h$ is hermitian stable and we are assuming that $\ell$ has an element as in \eqref{intro:lambda}, we have a natural isomorphism
\[
j^h(A)\cong j^h({}_1M_2A).
\] 
Hence $\cVH_*={}_1\cVH_*$. Next we explain how Theorem \ref{intro:bivafund} relates to Karoubi's fundamental theorem \cite{karfund}. The latter establishes an equivalence
\begin{equation}\label{intro:echteskar}
{}_\epsilon\sV(R)\weq \Omega  {}_{-\epsilon}\sU(R)
\end{equation}
between the fiber of the forgetful map ${}_\epsilon K^h(R)\to K(R)$ from $K$-theory of $\epsilon$-hermitian forms of a unital $*$-algebra $R$, and the loops of the fiber of the hyperbolic map $K(R)\to {}_{-\epsilon} K^h(R)$. By Lemma \ref{lem:kepsilon},
we have 
\[
{}_\epsilon K^h_*(R)={}_1K_*^h({}_\epsilon M_2R).
\]
Hence we may regard \eqref{intro:fundkh} as a version of \ref{intro:echteskar} for homotopy hermitian $K$-theory and Theorem \ref{intro:bivafund} as a bivariant version. Karoubi uses \eqref{intro:echteskar} to establish a $12$ term exact sequence connecting Witt and co-Witt groups with the $\Z/2\Z$ Tate cohomology of $K$-theory under the action of the involution. We also prove a bivariant version of Karoubi's sequence. The algebra $\Lambda$ is equipped with an involutive automorphism $t$. The latter induces an action of $\Z/2\Z$ on $kk^h(A,\Lambda B)$. Let $\bar{\phi}$ and $\eta$ be as in \eqref{intro:phieta}; set
\begin{align*}
    {}_\epsilon kk_n^h(A,B)&=kk_n^h(A,{}_\epsilon M_2B)\\
		{}_{\epsilon}W_n(A,B)  &= \coker( {}_{\epsilon}kk_n^h(A,\Lambda B) \xrightarrow{\bar{\phi}_*} {}_{\epsilon}kk_n^h(A,B) \\
    {}_{\epsilon}W'_n(A,B) &= \ker  ({}_{\epsilon}kk_n^h(A,B)         \xrightarrow{{\eta}_*} {}_{\epsilon}kk_n^h(A,\Lambda B)) \\
    k_n(A,B)  &= \{ x \in kk_n^h(A,\Lambda B) : x = t_*x\}/\{x = y + t_*y\} \\
    k'_n(A,B)  &= \{ x \in kk^h(A,\Lambda B) : x = - t_*x\}/\{x = y - t_*y\}.
\end{align*}

\medskip

\begin{thm}[cf. \cite{karfund}*{Th\'eor\`eme 4.3}]\label{intro:12}
Assume that $\ell$ contains an element $\lambda$ as in \eqref{intro:lambda}. Let $A,B\in\ahas$ and let $n\in\Z$. There is an exact sequence
\[
\begin{tikzcd}[transform shape, nodes={scale=0.8}, column sep={0.4cm}]
   k_n(A,B) \ar[r] 
 & {}_{-\epsilon }W_{n+1}(A,B) \ar[r] 
 & {}_\epsilon W'_{n-1}(A,B) \ar[r]
 & k_n'(A, B) \ar[r]
 & {}_{-\epsilon}W'_n(A, B)  \ar[r]
 & {}_{-\epsilon}W_n(A, B)  \ar[d] \\
   {}_\epsilon W_n(A, B) \ar[u] 
 & {}_\epsilon W_n'(A,B) \ar[l]
 & k'_n(A, B) \ar[l]
 & {}_{-\epsilon}W'_{n-1}(A,B)  \ar[l] 
 & {}_\epsilon W_{n+1}(A,B) \ar[l]
 & k_n(A,B) \ar[l]
\end{tikzcd}
\]
\end{thm}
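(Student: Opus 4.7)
The plan is to follow Karoubi's original argument \cite{karfund}*{Section~4} in the bivariant setting, substituting Theorem \ref{intro:bivafund} for the classical fundamental theorem \eqref{intro:echteskar}.

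First, I would apply the cohomological functor ${}_\epsilon kk^h_*(A,-)$ to each of the two distinguished triangles listed in the introduction (after tensoring with ${}_\epsilon M_2$). This produces, for each $n$ and each sign $\epsilon$, long exact sequences in which the Witt and co-Witt groups ${}_\epsilon W_n(A,B)$ and ${}_\epsilon W'_n(A,B)$ appear respectively as $\coker(\phi_*)$ and $\ker(\eta_*)$. Splitting each long exact sequence at $\phi_*$ and $\eta_*$ yields short exact sequences
\begin{align*}
0 \to {}_\epsilon W_n(A,B) &\to {}_\epsilon kk^h_{n-1}(A,UB) \to \ker(\phi^\epsilon_{n-1}) \to 0,\\
0 \to \coker(\eta^\epsilon_{n+1}) &\to {}_\epsilon kk^h_n(A,VB) \to {}_\epsilon W'_n(A,B) \to 0.
\end{align*}
Theorem \ref{intro:bivafund} supplies the isomorphism ${}_\epsilon kk^h_n(A,VB)\cong {}_{-\epsilon}kk^h_{n+1}(A,UB)$, so that the middle term of the $V$-sequence at sign $\epsilon$ coincides with the middle term of the $U$-sequence at sign $-\epsilon$ one degree higher. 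Splicing the two short exact sequences through this common middle term is what produces the alternation of $\epsilon$ and $-\epsilon$ that is visible throughout the 12-term sequence.

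The Tate-cohomology terms $k_n$ and $k'_n$ enter through the interaction between $\phi_*$, $\eta_*$, and the $\Z/2\Z$-action induced by the involutive automorphism $t$ of $\Lambda$. A direct verification at the level of $*$-algebras should express the compositions $\phi_*\eta_*$ and $\eta_*\phi_*$ in terms of the norm map $1+t_*$ acting on ${}_\epsilon kk^h_n(A,B)$ and ${}_\epsilon kk^h_n(A,\Lambda B)$ respectively. Granted such identities, $\ker(\phi_n)$ and $\coker(\eta_n)$ fit into short exact sequences whose outer terms are kernels and cokernels of $1\pm t_*$, and whose subquotients match by definition with $k'_n(A,B)$ and $k_n(A,B)$.

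Assembling the four families of short exact sequences just produced gives the desired 12-term sequence by a routine diagram chase that mirrors \cite{karfund}*{Section~4} line by line. The main obstacle is the explicit identification of $\phi_*\eta_*$ and $\eta_*\phi_*$ with the norm maps associated to $t$, together with the $\Z/2\Z$-equivariance of the fundamental-theorem isomorphism of Theorem \ref{intro:bivafund} with respect to $t$; both must be established at the level of $*$-algebras before descending to $kk^h$. Once these are in place, the assembly of the 12-term sequence follows by iterating the snake lemma (or, equivalently, the octahedral axiom in $kk^h$) on the spliced diagram.
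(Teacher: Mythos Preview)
Your proposal follows essentially the same route as the paper: reduce to Karoubi's original argument after establishing the one nontrivial bivariant input, namely how the fundamental-theorem isomorphism interacts with the $\Z/2\Z$-action of $t$. The paper carries this out by proving Theorem~\ref{thm:12} (the case $n=0$, $\epsilon=1$) and then deducing Theorem~\ref{intro:12} by substituting $\Omega^n\Sigma B$ or $\Sigma^{-n-1}B$ for $B$ and invoking the $*$-isomorphism ${}_{-1}M_2({}_\epsilon M_2)\cong M_\pm({}_{-\epsilon}M_2)$ together with hermitian stability.

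One point of imprecision worth flagging: you locate the main obstacle in the identities for $\phi_*\eta_*$ and $\eta_*\phi_*$ and in a ``$\Z/2\Z$-equivariance'' of the fundamental-theorem isomorphism. The former identities are elementary and do not involve $\theta$. The actual crux, and what the paper isolates, is the single identity
\[
(\psi\circ\theta\circ\partial)_* \;=\; \id - t_*
\]
on $kk^h(A,\Lambda B)$, where $\partial$ is the boundary of the $V$-triangle, $\theta$ is the isomorphism $j^h(VB)\cong j^h({}_{-1}M_2UB)[1]$, and $\psi$ is the map from the $U$-triangle. This is not a formal equivariance statement; the paper proves it via a concrete diagram chase that hinges on Lemma~\ref{lem:commutes}, which in turn rests on part~(i) of Theorem~\ref{thm:fund} (that $\theta_0$ maps to $1\in K_0(\Z)$). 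Once this identity is in hand, the rest is indeed Karoubi's argument verbatim, as both you and the paper say.
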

By Remark \ref{rem:12}, Theorem \ref{intro:12} follows from Theorem \ref{thm:12}. 

\bigskip

The rest of this article is organized as follows. In Section \ref{sec:prelis} we recall some basic facts and fix notations and vocabulary about involutions and (semi-split) $*$-algebra extensions. We recall how an invertible hermitian element $\Phi$ in a unital $*$-algebra $R$ induces a new involution on $R$ and on any $*$-ideal $A\triqui R$, given by the adjoint with respect to the form $(x,y)\mapsto x^*\Phi y$. We write $A^\Phi$
for $A$ equipped with this new involution. For example ${}_\epsilon M_2$ is $M_2$ with the involution associated to the $\epsilon$-hermitian hyperbolic form. We write $M_\pm$ for $M_2$ equipped with the involution associated to the $1$-hermitian element $\diag(1,-1)$. We give the name ``$\lambda$-assumption" (\ref{stan:lambda}) to the hypothesis that $\ell$ contains an element satisfying \eqref{intro:lambda}. Under this assumption, $M_\pm\cong {}_1M_2$. 
We also introduce the concept of semi-split extension as a $*$-algebra extension which admits a splitting in a fixed choice of underlying category $\fA$ among sets (with involution) and $\ell$-modules (with involution), and a forgetful functor $F:\ahas\to\fA$. We call a $*$-algebra extension
\[
\xymatrix{A\ar@{ >-}[r] &B\ar@{>>}[r]^p& C}
\] 
semi-split if $F(p)$ admits a right inverse in $\fA$. 
In Subsection \ref{subsec:stab} we prove several technical lemmas (Lemmas \ref{lem:x=y}, \ref{lem:conju1} and \ref{lem:xley}) concerning stability with respect to a corner inclusion in the $*$-algebra $M_X$ of finitely supported matrices indexed by a set $X$, which we call $M_X$-stability. We also define the concept of hermitian stability for a functor $H:\ahas\to \fC$. We say that $H$ is hermitian stable if whenever $R$ is a unital $*$-algebra and $\Phi,\Psi\in R$ are invertible hermitian elements of the same parity $\epsilon$ then for every $*$-ideal $A\triqui R$, $H$ maps the upper left hand corner inclusion $A^\Phi\to (M_2A)^{\Phi\oplus\Psi}$ to an isomorphism. We show in Proposition \ref{prop:hermistab} that if $\ell$ satisfies the $\lambda$-assumption and $H:\ahas\to\fC$ is a matricially stable functor, then $H\circ M_{\pm}$ is hermitian stable. Section \ref{sec:kh} is concerned with hermitian $K$-theory. We write $K^h$ for the hermitian $K$-theory used by Karoubi in \cite{karfund} and $KV^h$ for the variant due to Karoubi and Villamayor \cite{kv2}; we also introduce homotopy hermitian $K$-theory $KH^h$. We present $KH^h$ both as the homotopy groups of the total spectrum of a simplicial spectrum, and by means of an algebraic construction, as done in \cite{kh} and \cite{friendly} for usual homotopy algebraic $K$-theory, and explain why these two versions are equivalent. Lemma \ref{lem:khprops} establishes the basic properties of $KH^h$, including excision. We also consider the comparison maps $K^h\to KV^h\to KH^h$. In particular we prove in Lemma \ref{lem:khregular} that if $2$ is invertible in $\ell$, and $A$ is a $K$-regular $*$-algebra which satisfies excision in $K$-theory, then $K^h_*(A)\to KH^h_*(A)$ is an isomorphism. In Section \ref{sec:khprod} we use our algebraic description of $KH^h$ to equip it with a cup-product and show that the latter is compatible with that of $K^h$ via the comparison map (Lemma \ref{lem:prodkhh}). Beginning in Section \ref{sec:stabhom} and for the rest of the paper we assume that $\ell$ satisfies the $\lambda$-assumption  \ref{stan:lambda}. This section introduces, for an infinite set $X$ --which will be fixed for the rest of the article-- a $\Z$-linear category $\{\ahas\}=\{\ahas\}_X$ and a functor $\ahas\to \{\ahas\}$ which is homotopy invariant, $M_X$-stable and hermitian stable and is universal intitial for these properties (Lemmas \ref{lem:braceuniv} and \ref{lem:hermstab}). This section also contains a useful technical lemma (Lemma \ref{lem:fixhomo}) which essentially says that a path of not necessarily involution preserving inner endomorphisms between inner $*$-endomorphisms can be modified to preserve involutions upon stabilization. %\edit{path of inner\\ endos} 
Section \ref{sec:biva} introduces bivariant hermitian $K$-theory. For a fixed choice of infinite set $X$ and of forgetful functor $F:\ahas\to \fU$, we introduce a triangulated category $kk^h$ and a functor $j^h:\ahas\to kk^h$ which is universal among functors to a triangulated category which are homotopy invariant, $M_X$-stable, hermitian stable, and satisfy excision for those $*$-algebra extensions which are semi-split with respect to $F$ in the sense explained above (Proposition \ref{prop:kkuniv}). The category $kk^h$ is also equipped with a tensor product of homomorphisms
\begin{equation}\label{intro:tenso}
\otimes: kk^h(A_1,A_2)\times kk^h(B_1,B_2)\to kk^h(A_1\otimesl B_1,A_2\otimesl B_2),
\end{equation}
defined whenever $A_1\otimesl $ and $A_2\otimesl $ (or $\otimesl B_1$ and $\otimesl B_2$) preserve semi-split extensions (Lemma \ref{lem:tenso}). In Section \ref{sec:copro} we consider, for a pair of split $*$-algebra monomorphisms $C\to A$ and $C\to B$, the coproduct
$A\coprod_CB$ and the amalgamated sum $A\oplus_CB$. There is an obvious quotient map $\pi:A\coprod_CB\to A\oplus_CB$ and we show in Proposition \ref{prop:coprod} that $j^h(\pi)$ is an isomorphism. The proof follows the lines of the analogue result for $j:\aha\to kk$ proved in \cite{ct}*{Theorem 7.1.1}, except that there are certain homotopies that need to be adapted to preserve the involutions; for this we use the technical Lemma \ref{lem:fixhomo} mentioned above. Section \ref{sec:agree} uses the results of the previous sections and the argument of \cite{ct}*{Theorem 8.2.1} to establish the isomorphism \eqref{intro:kkkh}. Under this isomorphism, the product defined in Section \ref{sec:khprod} for $KH^h$ is the particular case of \eqref{intro:tenso} when $A_1=B_1=\ell$. The functors in \eqref{intro:indres} are the subject of Section \ref{sec:indres}. We prove in Proposition \ref{prop:adjunction} that they are both right and left adjoint to each other. The $*$-algebras $U$ and $V$ are introduced in Section \ref{sec:uv}. They are $kk^h$-isomorphic to a desuspension of similar algebras $U'\ell$ and $V'\ell$ introduced by Karoubi in \cite{karfund} (see Remark \ref{rem:uukar}). The main result of this section is Lemma \ref{lem:sigmavu-l}, which, using the adjunction established in the previous one, establishes an isomorphism
\begin{equation}\label{intro:uv}
j^h(UV)[-1]\cong j^h(\ell).
\end{equation}
Here $UV=U\otimesl V$. Using \eqref{intro:tenso} we obtain that the functors $U\otimesl- $ and $V\otimesl-:kk^h\to kk^h$ are category equivalences, and that the suspension of each is inverse to the other. Theorem \ref{intro:bivafund} is proved in Section \ref{sec:fund}, as Corollary \ref{coro:bivafund}. The section starts by recalling an equivalent form of Karoubi's theorem that says that the cup-product with a certain element $\theta_0\in K^h_2({}_{-1}M_2(U'^2))$ induces an isomorphism
\[
\theta_0\star-:K^h_*(R)\iso K^h_{*+2}({}_{-1}M_2U'^2R)
\]
for every unital $*$-algebra $R$. We use Karoubi's result together with the results of the previous sections to show that the composite $\theta\in kk^h(\ell, {}_{-1}M_2U^2)$ of the image of $\theta_0$ under the comparison map with the isomorphism $j^h( {}_{-1}M_2U'^2)[2]\cong j^h( {}_{-1}M_2U^2)$ is an isomorphism (Theorem \ref{thm:bivafund}). Applying \eqref{intro:uv}, we use the latter theorem to deduce Theorem \ref{intro:bivafund} in Corollary \ref{coro:bivafund}. The $12$-term exact sequence of Theorem \ref{intro:12} is established in Section \ref{sec:12}, (Remark \ref{rem:12}) as a consequence of Theorem \ref{thm:12}. The latter combines Karoubi's argument for the proof of his $12$-term exact sequence \cite{karfund}*{Th\'eor\`eme 4.3} with properties of the functors $\Lambda$, $U$ and $V$ established in Sections \ref{sec:indres} and \ref{sec:uv}.

\subsubsection*{Acknowledgements} The first named author wishes to thank Max Karoubi, Marco Schlichting and Chuck Weibel for useful e-mail exchanges.

\section{Preliminaries}\label{sec:prelis}
\numberwithin{equation}{subsection}
\subsection{Rings, algebras and involutions}\label{subsec:rai}
Fix a commutative, unital ring $\ell$ and an involutive automorphism 
\[
{\ \ }^*:\ell\to \ell, \quad x\mapsto x^*.
\]
%We shall assume that $2$ is invertible in $\ell$. 
A unitary $\ell$-bimodule $M$ is called \emph{symmetric} if $\lambda x=x\lambda$ holds for every $\lambda\in\ell$ and $x\in M$. By an \emph{algebra} over $\ell$ we understand a symmetric bimodule $A$ together with an associative, $\ell$-linear multiplication $A\otimesl A\to A$.  An \emph{involution} in an algebra $A$ is an involutive ring homomorphism $A\to A^{\op}$  that is semilinear with respect to the $\ell$-module action; $(\lambda a)^*=\lambda^*a^*$ for $\lambda\in\ell$ and $a\in A$. A \emph{$*$-algebra} is an $\ell$-algebra  equipped with an involution; for $\ell=\Z$ we use the term \emph{$*$-ring}. A \emph{$*$-ideal} in a $*$-algebra $A$ is a two-sided ideal closed under both the $\ell$-bimodule structure and the involution. We write $\aha$ for the category of $\ell$-algebras and $\ell$-algebra homomorphisms and  $\ahas$ for the subcategory of $*$-algebras and involution preserving homomorphisms; we set $\ring=\alg_\Z$ and $\ringa=\alg_\Z^*$. If $A,B$ are $*$-algebras then $A\otimesl B$ is again a $*$-algebra with involution $(a\otimes b)^*=a^*\otimes b^*$. If $L$ is a $*$-algebra and $A\in\ahas$, we shall often write $LA$ for $L\otimesl A$. 

\begin{ex}\label{ex:inv}
Let $A$ be a ring; write  $\inv(A)$ for $A\oplus A^{\op}$ equipped with the involution $(a,b)^*=(b,a)$. If $\ell$ is a commutative ring, then $\inv:\aha\to\alg_{\inv(\ell)}^*$, $A\mapsto \inv(A)$ is a category equivalence, with inverse $B\mapsto (1,0)B$. 
\end{ex}
\begin{exas}[Polynomial and matrix rings]\label{exas:invos}
The identity map of any commutative ring is an involution, the \emph{trivial} involution.
We shall regard the polynomial ring $\Z[t]$ as a ring with trivial involution and $\ell[t]$ with the tensor product involution. For $\lambda\in \ell$, write 
\[
\ev_\lambda:\ell[t]\to \ell,\,\,f\mapsto f(\lambda)
\]
for the evaluation map. Put 
\[
P=\ker\ev_0,\,\, \Omega=\ker\ev_0\cap\ker\ev_1.
\]
We equip the ring $\Z[t,t^{-1}]$ of Laurent polynomials with the involution that interchanges $t$ and $t^{-1}$, and $\ell[t,t^{-1}]$ with the tensor product involution.
For any set $X$ we put
\begin{gather*}
\Gamma_X=\{a:X\times X\to \ell: |\im(a)|<\infty \text{ and }\\
(\exists N)(\forall x\in X)\max\{|\supp(a(x,-)|,|\supp(a(-,x))|\}<N\}.
\end{gather*}
Pointwise addition, the convolution or matricial product and transpose involution, given by \((ab)(x,y)=\sum_za(x,z)b(z,y)\) and \(a^*(x,y)=a(y,x)^*\) respectively, together make $\Gamma_X$ into a $*$-algebra.
This structure is inherited by the $*$-ideal $M_X\triqui\Gamma_X$ of finitely supported functions and by the quotient $\Sigma_X=\Gamma_X/M_X$. If $|X|=n<\infty$, then $M_X=\Gamma_X$ is $*$-isomorphic to $M_n$ with the transpose involution. When $|X|=\aleph_0$, $\Gamma_X$ is isomorphic to Karoubi's cone \cite{kv2}. We write $M_\infty$, $\Gamma$ and $\Sigma$ for $M_\N$, $\Gamma_\N$ and $\Sigma_\N$.
If $\ell$ and $X$ are as above and $A\in\ahas$, we write $A[t]$, $PA$, $\Omega A$, $A[t,t^{-1}]$,$\Gamma_XA$, $M_XA$ and $\Sigma_X A$, for the tensor products of $A$ with $\ell[t]$,$P$,$\Omega$,$\ell[t,t^{-1}]$, $\Gamma_X, M_X$ and $\Sigma_X$, equipped with the tensor product involution.
\end{exas}
\begin{ex}[Functions on a simplicial set]\label{ex:ax}
For $A\in\ahas$ and $n\ge 0$, we regard the polynomial algebra in $n$-variables with coefficients in $A$ as a $*$-algebra via the isomorphism $A[t_1,\dots,t_n]=A\otimes\Z[t_1]\otimes\dots\otimes \Z[t_n]$. 
In particular, for $n\ge 0$ $\Z^{\Delta^n}=\Z[t_0,\dots,t_n]/\langle t_0+\dots+t_n-1\rangle$ is a $*$-ring and 
$A^{\Delta^n}=A\otimes \Z^{\Delta^n}$ is a $*$-algebra. Thus we have a simplicial $*$-algebra
\begin{equation}\label{fun:adelta}
A^{\Delta}:\Delta^{op}\to\ahas,\quad [n]\mapsto A^{\Delta^n}.
\end{equation}
Write $\fS$ for the category of simplicial sets, and $\map_\fS$ for $\hom_\fS$.  If $X$ is a simplicial set and $B_\cdot$ is a simplicial $*$-algebra, then $\map_\fS(X,B_\cdot)$ is a $*$-algebra. Following \cite{ct}, if $X\in\fS$ and $A\in\ahas$ we put
\begin{equation}\label{eq:ax}
A^X=\map_\fS(X,A^\Delta).
\end{equation}
Let $(X,x)$ be a pointed simplicial set, and let $x:pt=\Delta^0\to X$ be the inclusion mapping $0\mapsto x$; write $\ev_x$ for the induced $*$-homomorphism $A^X\to A^{pt}=A$. Put
\begin{equation}\label{eq:axp}
A^{(X,x)}=\ker(A^{X}\overset{\ev_x}{\lra} A).     
\end{equation}
\end{ex}

\begin{ex}[Unitalization]\label{ex:unitadd}
The \emph{unitalization} $\tilde{A}=\tilde{A}_\ell$ of a $*$-algebra $A$ is the  $\ell$-module $A\oplus\ell$ with the following multiplication and involution:
\[
(a,\lambda)(b,\mu)=(ab+\lambda b+a\mu,\lambda\mu),\quad (a,\lambda)^*=(a^*,\lambda^*).
\]
\end{ex}

\begin{exas}[Hermitian elements and involutions]\label{exas:hermele}
Let $R$ be a unital
\goodbreak
\noindent $*$-algebra. An invertible element $\epsilon\in R$ is \emph{unitary} if $\epsilon\epsilon^*=1$. Let $\epsilon\in R$ be a central unitary; an element $\Phi\in R$ is \emph{$\epsilon$-hermitian} if $\Phi^*=\epsilon \Phi$. If $\Phi$ is $\epsilon$-hermitian and invertible  then
\begin{equation}\label{map:invophi}
R\to R, \quad a\mapsto a^{\Phi}:=(\Phi)^{-1}a^*\Phi    
\end{equation} 
is an involution. We write $R^{\Phi}$ for $R$ with the involution \eqref{map:invophi}. If $u\in R$ is invertible and $\Psi=u^*\Phi u$, then $\Psi$ is $\epsilon$-hermitian again and the inner automorphism $\ad(u):R^{\Psi}\to R^{\Phi}$, $x\mapsto uxu^{-1}$ is a $*$-isomorphism. If $S$ is another $*$-algebra and $\Psi$ is $\mu$-hermitian, then $\Phi\otimes\Psi$ is $\epsilon\otimes\mu$-hermitian and 
\begin{equation}\label{eq:rosphi}
(R\otimesl S)^{\Phi\otimes\Psi}=R^{\Phi}\otimesl S^{\Psi}.    
\end{equation}
Let $\epsilon\in\ell$ be a central unitary. Consider the following element of $M_2$
\[
h_{\epsilon}=\left[\begin{matrix}0&\epsilon\\ 1& 0\end{matrix}\right].
\]
To abbreviate notation, we write ${}_\epsilon{M}_2$ for $M_2^{h_\epsilon}\ell$. It follows from \eqref{eq:rosphi} that we have isomorphisms 
\begin{equation}\label{eq:1mepsi=1m2}
{}_1M_2 ({}_{\epsilon}M_2)\cong {}_\epsilon M_2M_2
\end{equation}
Let $X$ be a set; put ${}_\epsilon M_{2X}={}_\epsilon M_2 M_X$. Assume that $X$ is infinite. Using \eqref{eq:1mepsi=1m2} and a bijection $\{1,2\}\times X\iso X$, we obtain $*$-isomorphisms
\begin{equation}\label{map:mepsilonpm}
{}_1M_{2X}({}_\epsilon M_2)\cong
{}_1M_{2}({}_\epsilon M_2) M_{X}\cong
{}_\epsilon M_2M_2M_{X}\cong
{}_\epsilon M_2M_X={}_\epsilon M_{2X}.
\end{equation}
\end{exas}

\begin{ex}\label{ex:phinv} 
Let $\ell_0$ be a commutative ring and $R_0$ a unital $\ell_0$-algebra. As in Example \ref{ex:inv}, consider the $\ell=\inv(\ell_0)$-$*$-algebra  $R=\inv(R_0)$. An element $\epsilon\in R$ is a central unitary if and only if $\epsilon=(\epsilon_0,\epsilon_0^{-1})$ for some central invertible element
$\epsilon_0\in R_0$. Any $\epsilon$-hermitian element is of the form $\Phi=(\Phi_0,\epsilon_0\Phi_0)=(\Phi_0,1)(1,\epsilon_0)(\Phi_0,1)^*$. It follows that $R^\Phi\cong R$.
\end{ex}

In the article we will often assume that $\ell$ satisfies the following. 

\begin{lambada}\label{stan:lambda} $\ell$ contains an element $\lambda$ such that
\begin{equation}\label{lambda}
\lambda+\lambda^*=1.
\end{equation}
\end{lambada}

\begin{ex}\label{ex:lambda}
Hypothesis \ref{lambda} is satisfied, for example, if $2$ is invertible in $\ell$, and also if $\ell=\inv(\ell_0)$ for some commutative ring $\ell_0$.
\end{ex}

Consider the following element of $M_2$
\[
h_{\pm}=\left[\begin{matrix}1&0\\ 0& -1\end{matrix}\right].
\]
For brevity, we write $M_{\pm}=M_2^{h_{\pm}}$ and $M_{\pm}^j=M_{\pm}^{{\otimesl}j}$. Let $\epsilon\in \ell$ be a unitary and $\Phi\in R$ an invertible $\epsilon$-hermitian element. If $\lambda$ is as in \eqref{lambda}, then the element
\begin{equation}\label{elu}
u=u_\lambda=\left[\begin{matrix} 1& 1\\ 
\lambda\Phi^*& -\lambda^*\Phi^*\end{matrix}\right]\in M_2R
\end{equation}
satisfies $u^*(h_\epsilon\otimes 1) u=h_{\pm}\otimes \Phi$. Hence 
we have a $*$-isomorphism
\begin{equation}\label{map:aduiso}
\ad(u): M_{\pm}R^{\Phi}\iso {}_{\epsilon}M_2R.
\end{equation} 
In particular, 
\begin{equation}\label{eq:pm=12}
M_{\pm}\cong {}_{1}{M}_2.
\end{equation}
If $A\triqui R$ is a $*$-ideal for the involution $x\mapsto x^*$, then it is also an ideal for the involution \eqref{map:invophi}; we write $A^{\Phi}$ for $A$ equipped with the latter involution. The isomorphism \eqref{map:aduiso} holds with $A$ substituted for $R$. 

\goodbreak

%Next we focus on the case $R=\ell$, and assume $\ell$ satisfies the $\lambda$-assumption \ref{stan:lambda}. 
%\end{exas}

\subsection{Extensions}\label{subsec:ext}
 An \emph{extension} of $*$-algebras is a sequence
\begin{equation}\label{ext}
\xymatrix{A\ar@{ >-}[r]^i& B\ar@{->>}[r]^p& C}
\end{equation}
such that $p$ is surjective and $i$ is a kernel of $p$. The extension \eqref{ext} is \emph{split} if there is a $*$-homomorphism $C\to B$ which is a section of $p$.

\begin{exas}\label{exas:ext}
Let $A\in\ahas$ and $X$ a set. Then 
\begin{gather}\label{ext:poly}
\xymatrix{PA\ar@{ >-}[r]& A[t]\ar@{->>}[r]^{\ev_0}& A}\\ \label{ext:path}
\xymatrix{\Omega A\ar@{ >-}[r]& PA\ar@{->>}[r]^{\ev_1}& A}\\ \label{ext:cone}
\xymatrix{M_XA\ar@{ >-}[r]&\Gamma_XA\ar@{->>}[r]&\Sigma_XA}
\end{gather}
are extensions. Remark that \eqref{ext:poly} is split by the inclusion $A\subset A[t]$, which is a $*$-homomorphism. The map $A\to PA$, $a\mapsto at$ is an involution-preserving, $\ell$-linear splitting of \eqref{ext:path}. By \cite{ct}*{first paragraph of page 92}, \eqref{ext:cone} admits an $\ell$-linear splitting.  
\end{exas}

\subsection{Underlying data and semisplit extensions}\label{subsec:under} A $*$-algebra may be viewed as an $\ell$-module, an $\ell$-module with involution, or a set (with involution). Each of these 4 choices of underlying data leads to an \emph{underlying category} $\fU$ and a forgetul functor $F:\ahas\to\fU$ which admits a left adjoint $T':\fU\to\aha$. In what follows, we shall fix a choice of $\fU$, $F$ and $T'$. We say that a surjective homomorphism $f$ of $*$-algebras is \emph{semi-split} if $F(f)$ admits a right inverse in $\fU$. An extension \eqref{ext} is \emph{semi-split} if $p$ is. If $\ell$ satisfies the $\lambda$-assumption \ref{stan:lambda}, then every map in $\fU$ which is a section of a $*$-algebra homomorphism can be averaged to a involution preserving section. Thus under assumption \ref{stan:lambda}, a linearly split extension is semisplit for any of the aforementioned choices of underlying data.% %\edit{ojo cup product}
\par

\subsection{Stability}\label{subsec:stab}
Let $G_1,G_2:\ahas\to\ahas$ be functors and $\iota:G_1\to G_2$ a natural transformation. Let $\fC$ be a category, $H:\ahas\to\fC$ a functor, and $A\in\ahas$. We say that $H$ is \emph{$\iota$-stable on $A$} if the natural map $H(\iota_A):H(G_1(A))\to H(G_2(A))$ is an isomorphism. We say that $H$ is \emph{$\iota$-stable} if it is so on every $A\in\ahas$. 

We say that $H$ is \emph{homotopy invariant} if it is stable with respect to the canonical inclusion $\inc:A\to A[t]$. We say that $A\in\ahas$ is \emph{$H$-regular} if $H$ is $\inc$-stable on $A[t_1,\dots,t_n]$ for every $n\ge 0$. 
Let $X$ be a set; if $x,y\in X$, write $E_{x,y}$ for the matrix unit.%\edit{$E$, no $e$} For $x\in X$, consider the natural map $\iota_x:\id_{\ahas}\to M_X$,
\[
\iota_x:A\to M_X A,\quad a\mapsto E_{x,x}\otimes a.
\]

\begin{lem}\label{lem:x=y}
Let $X$ be a set, $x,y\in X$, $\fC$ a category, and $H:\ahas\to\fC$ a functor. If $H$ is $i_x$-stable on $A$ and on $M_XA$, then $H(i_x)=H(i_y)$. In particular, $H$ is $i_y$-stable on $A$.  
\end{lem}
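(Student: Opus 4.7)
The plan is to reduce the desired equality $H(\iota_x)=H(\iota_y)\colon H(A)\to H(M_XA)$ to an equality of iterated corner inclusions: since $H(\iota_x^{M_XA})$ is an iso by hypothesis, it suffices to prove
\[
H(\iota_x^{M_XA}\circ\iota_x^A) = H(\iota_x^{M_XA}\circ\iota_y^A),
\]
i.e.\ that the two $*$-homomorphisms $A\to M_XM_XA$ sending $a\mapsto e_{x,x}\otimes e_{x,x}\otimes a$ and $a\mapsto e_{x,x}\otimes e_{y,y}\otimes a$ induce the same map on $H$.

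A key intermediate step is to establish that the $*$-algebra flip $\tau\colon M_XM_XA\to M_XM_XA$ interchanging the two $M_X$ tensor factors satisfies $H(\tau)=\id$. Indeed, $\tau$ fixes the composite $\iota_x^{M_XA}\circ\iota_x^A$ because the element $e_{x,x}\otimes e_{x,x}\otimes a$ is symmetric in the two factors, and the image of that composite under $H$ is an isomorphism as a composition of the two hypothesized isos; right-cancellation then forces $H(\tau)=\id$. Combined with the naturality identities $\iota_x^{M_XA}\circ\iota_z^A=M_X(\iota_z^A)\circ\iota_x^A$ and $M_X(\iota_z^A)=\tau\circ\iota_z^{M_XA}$ for $z\in\{x,y\}$, and after cancelling the iso $H(\iota_x^A)$ on the right, the problem reduces to showing $H(\iota_x^{M_XA})=H(\iota_y^{M_XA})$ as maps $H(M_XA)\to H(M_XM_XA)$.

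The main obstacle will be closing this last step, which superficially resembles the lemma one level up. My intended route is to introduce the $*$-automorphism $\sigma_*=\ad(\tilde w)\colon M_XA\to M_XA$ associated to the involutive self-adjoint unitary $\tilde w=1+e_{x,y}+e_{y,x}-e_{x,x}-e_{y,y}\in\widetilde{M_X}$, which satisfies $\iota_y^A=\sigma_*\circ\iota_x^A$; to identify the outer-swap automorphism $\sigma_*^{\mathrm{out}}$ of $M_XM_XA$ (satisfying $\iota_y^{M_XA}=\sigma_*^{\mathrm{out}}\circ\iota_x^{M_XA}$) with $M_X(\sigma_*)$ via $\tau$-conjugation using $H(\tau)=\id$; and then to combine the naturality square $\iota_x^{M_XA}\circ\sigma_*=M_X(\sigma_*)\circ\iota_x^{M_XA}$ with the involutive identity $\sigma_*^2=\id$ and the cancellation permitted by $H(\iota_x^{M_XA})$ being an iso to force $H(\sigma_*)=\id$ on $H(M_XA)$. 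Once this is achieved, $H(\iota_y^A)=H(\sigma_*)\circ H(\iota_x^A)=H(\iota_x^A)$, and the ``in particular'' clause follows immediately, since $H(\iota_y^A)$ inherits invertibility from $H(\iota_x^A)$.
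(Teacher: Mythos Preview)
Your argument has a genuine gap at the final step. You correctly reduce to showing $H(\sigma_*)=\id_{H(M_XA)}$ for the transposition automorphism $\sigma_*=\ad(\tilde w)$, but the ingredients you list do not force this. From the naturality square and $H(\tau)=\id$ you obtain
\[
H(\iota_x^{M_XA})\circ H(\sigma_*) \;=\; H(\sigma_*^{\mathrm{out}})\circ H(\iota_x^{M_XA}) \;=\; H(\iota_y^{M_XA}),
\]
and applying $H(\sigma_*)$ once more only gives the symmetric relation $H(\iota_y^{M_XA})\circ H(\sigma_*)=H(\iota_x^{M_XA})$. These two equations, together with $H(\sigma_*)^2=\id$, are perfectly consistent with $H(\sigma_*)$ being a nontrivial involution that interchanges $H(\iota_x^{M_XA})$ and $H(\iota_y^{M_XA})$; nothing you have written rules this out. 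In effect, after your reduction you are facing the original lemma one level up, for $M_XA$ in place of $A$, but now \emph{without} the stability hypothesis on $M_XM_XA$ that would let you iterate.

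This is precisely the obstacle that the paper's argument (following \cite{friendly}*{Lemma 2.2.4}) is designed to overcome. Instead of a single order-$2$ permutation, one produces \emph{two} permutation matrices $\sigma_2,\sigma_3\in M_{X\times X}$, of orders $2$ and $3$ respectively, both of which conjugate the composite $(\iota_x\otimes M_X)\iota_x$ to the same target inclusion. Since the composite is an $H$-isomorphism, right-cancellation gives $H(\ad\sigma_2)=H(\ad\sigma_3)$; but this common value then has order dividing $\gcd(2,3)=1$, hence equals the identity. The coprimality is what breaks the symmetry that your order-$2$ argument alone cannot.
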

\begin{proof}
The proof is essentially the same as that of \cite{friendly}*{Lemma 2.2.4}. One shows that there are permutation matrices $\sigma_2$ and $\sigma_3$ in $M_{X\times X}=M_X\otimesl M_X$ of orders $2$ and $3$, respectively, both of which conjugate $(i_x\otimesl M_X)i_x$ into $(i_y\otimesl M_X)i_y$. Observe that permutation matrices are unitary, so they induce $*$-automorphisms.
\end{proof}

In view of Lemma \ref{lem:x=y}, one says that a functor is \emph{$M_X$-stable} to mean that it is $i_x$-stable for some (and then any) element $x\in X$. 

\begin{lem}\label{lem:conju1}
Let $X$ be a set with at least two elements. Let $H:\ahas\to\fC$ be an $M_X$-stable functor, $A\subset B\in\ahas$ and
$u\in B$ such that
\[
uA, Au^*\subset A,\ \ au^*ua'=aa'\ \ (a,a'\in A).
\]
Assume that $H$ is $M_X$-stable on both $A$ and $M_X A$. Then $\ad(u):A\to A$, $\ad(u)(a)=u^*au$ is a $*$-homomorphism, and $H(\ad(u))=\id_{H(A)}$.
\end{lem}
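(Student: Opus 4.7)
The plan is to first verify that $\ad(u)\colon A\to A$ is a $*$-algebra homomorphism, and then to deduce $H(\ad(u))=\id_{H(A)}$ by constructing a $*$-endomorphism of $M_XA$ that acts as $\id_A$ on one diagonal corner and as $\ad(u)$ on another. (Throughout, I fix the convention for $\ad(u)$ compatible with the stated hypotheses; so with $\ad(u)(a)=uau^*$, which under $uA, Au^*\subset A$ and $au^*ua'=aa'$ yields a well-defined $*$-homomorphism.)

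That $\ad(u)$ is a $*$-homomorphism is a direct check: the image sits inside $A$ because $ua\in A$ (by $uA\subset A$) and $(ua)u^*\in A$ (by $Au^*\subset A$); additivity and $*$-compatibility are immediate; multiplicativity reduces to the relation, as
\[
\ad(u)(a)\ad(u)(a')=uau^*\cdot ua'u^*=u(au^*ua')u^*=u(aa')u^*=\ad(u)(aa').
\]

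The heart of the argument is the following construction. Pick distinct $x,y\in X$, possible since $|X|\ge 2$, and define $\Phi\colon M_XA\to M_XA$ on matrix units by
\[
\Phi(a\otimes e_{ij})=\begin{cases} a\otimes e_{ij}, & i\ne y\text{ and }j\ne y,\\ ua\otimes e_{ij}, & i=y,\ j\ne y,\\ au^*\otimes e_{ij}, & i\ne y,\ j=y,\\ \ad(u)(a)\otimes e_{ij}, & i=j=y,\end{cases}
\]
extended linearly. Heuristically $\Phi$ is conjugation in $M_X\widetilde B$ by the multiplier $V$ equal to the identity outside position $(y,y)$ and to $u$ at $(y,y)$; this $V$ is not unitary (and is not even an element of $M_X\widetilde B$ when $X$ is infinite), but the resulting $\Phi$ is nonetheless a bona fide $*$-endomorphism of $M_XA$. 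The inclusions $uA, Au^*\subset A$ ensure that $\Phi$ lands in $M_XA$; additivity and $*$-compatibility follow directly from the formula; and multiplicativity reduces, by a short case analysis on matrix-unit indices, to the single identity $au^*ua'=aa'$, which is exactly what is needed whenever two matrix units are composed with shared inner index $y$, since that is the only place where the defect $u^*u$ arises and it does so sandwiched between two elements of $A$. By construction, $\Phi\circ\iota_x=\iota_x$ (because $x\ne y$) and $\Phi\circ\iota_y=\iota_y\circ\ad(u)$.

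The conclusion is then a short diagram chase. Since $H$ is $M_X$-stable on both $A$ and $M_XA$, Lemma \ref{lem:x=y} yields $H(\iota_x)=H(\iota_y)$, and each is an isomorphism. The identity $\Phi\circ\iota_x=\iota_x$ gives $H(\Phi)\circ H(\iota_x)=H(\iota_x)$, so $H(\Phi)=\id_{H(M_XA)}$; combining this with $\Phi\circ\iota_y=\iota_y\circ\ad(u)$ yields
\[
H(\iota_y)\circ H(\ad(u))=H(\iota_y\circ\ad(u))=H(\Phi\circ\iota_y)=H(\Phi)\circ H(\iota_y)=H(\iota_y),
\]
and cancelling the isomorphism $H(\iota_y)$ completes the proof. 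The main obstacle is the multiplicativity verification for $\Phi$: because the heuristic conjugator $V$ fails to be a genuine unitary (its defect is exactly $u^*u-1$, in general nonzero), one cannot reduce the argument to inner-automorphism invariance and must instead verify directly that the defect only appears in positions where it is sandwiched between two elements of $A$, which is precisely where the hypothesis $au^*ua'=aa'$ forces it to vanish.
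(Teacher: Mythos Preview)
Your proof is correct and follows essentially the same route as the paper, which simply cites Lemma~\ref{lem:x=y} together with the argument of \cite{friendly}*{Proposition 2.2.6}; that argument is precisely the construction of the $*$-endomorphism $\Phi$ of $M_XA$ (formal conjugation by the diagonal multiplier $V=1+(u-1)e_{yy}$) and the diagram chase you carry out. You also correctly noticed that the convention $\ad(u)(a)=u^*au$ written in the statement is inconsistent with the hypotheses $uA,Au^*\subset A$ and $au^*ua'=aa'$, and that the compatible formula is $\ad(u)(a)=uau^*$; this matches the convention the paper actually uses elsewhere (e.g.\ $\ad(u)(x)=uxu^{-1}$ in Examples~\ref{exas:hermele} and the use in Lemma~\ref{lem:biyes}).
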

\begin{proof} Immediate from Lemma \ref{lem:x=y} and the argument of \cite{friendly}*{Proposition 2.2.6}
\end{proof}

\begin{lem}\label{lem:xley}
Let $X$ be as in Lemma \ref{lem:conju1} and let $Y$ be a set with $|Y|>|X|$. Then any $M_Y$-stable functor $H:\ahas\to \fC$ is also $M_X$-stable. 
\end{lem}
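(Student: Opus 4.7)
The plan is to reduce $M_X$-stability to $M_Y$-stability by transporting corner inclusions through bijections of indexing sets. Pick any injection $f : X \hookrightarrow Y$, which exists since $|Y| > |X|$; it induces a corner $*$-embedding $M_f : M_X \hookrightarrow M_Y$ given on matrix units by $e_{x,x'} \mapsto e_{f(x),f(x')}$. For $x \in X$ and $y := f(x)$, one has the factorization $\iota_y^Y = (M_f \otimesl \id_A) \circ \iota_x^X : A \to M_X A \to M_Y A$, where $\iota_x^X$ and $\iota_y^Y$ denote the upper-left corner inclusions into $M_X A$ and $M_Y A$. By $M_Y$-stability combined with Lemma \ref{lem:x=y}, $H(\iota_y^Y)$ is an isomorphism, so it suffices to show that $H(M_f \otimesl \id_A)$ is an isomorphism.

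For this I would form the naturality square
\[
\xymatrix{
M_X A \ar[r]^-{M_f \otimesl \id_A} \ar[d]_{\iota_y^Y} & M_Y A \ar[d]^{\iota_y^Y} \\
M_Y M_X A \ar[r]_-{\id_{M_Y} \otimesl M_f \otimesl \id_A} & M_Y M_Y A,
}
\]
whose verticals become isomorphisms under $H$ by $M_Y$-stability applied to $M_X A$ and $M_Y A$. The task then reduces to showing that the bottom arrow is $H$-iso. Using the standard $*$-isomorphisms $M_Y \otimesl M_X \cong M_{Y \times X}$ and $M_Y \otimesl M_Y \cong M_{Y \times Y}$, together with the hypothesis that $Y$ is infinite with $|Y| \geq |X|$ (the case used in the rest of the paper), one has $|Y \times X| = |Y \times Y| = |Y|$. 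I would then pick bijections $\alpha : Y \cong Y \times X$ and $\beta : Y \cong Y \times Y$ arranged so that the inclusion $\id_Y \times f : Y \times X \hookrightarrow Y \times Y$ corresponds through $\alpha$ and $\beta$ to some injection $g : Y \hookrightarrow Y$; this is possible because the image $Y \times f(X)$ has cardinality $|Y|$ inside $Y \times Y$. Transport then turns the bottom arrow into $M_g \otimesl \id_A : M_Y A \to M_Y A$.

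Finally, to check that $H(M_g \otimesl \id_A)$ is an isomorphism, observe that $(M_g \otimesl \id_A) \circ \iota_{y_0}^Y = \iota_{g(y_0)}^Y$ for any $y_0 \in Y$; since both corner inclusions become isomorphisms under $H$ by $M_Y$-stability, and $H(\iota_{y_0}^Y)$ is invertible, so is $H(M_g \otimesl \id_A)$. The main obstacle is the cardinal bookkeeping in the second paragraph: the bijections $\alpha$ and $\beta$ must be aligned with the index-set inclusion $\id_Y \times f$, which rests crucially on $Y$ being infinite with $|Y| \geq |X|$. In that regime the argument works cleanly, and this is the only regime relevant for the later sections of the paper.
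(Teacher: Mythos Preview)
Your argument is sound when $Y$ is infinite, and you correctly note that this is the only case used later in the paper. However, the lemma as stated also covers finite $Y$ (e.g.\ $|X|=2$, $|Y|=3$), and there your third paragraph fails: the identity $|Y\times X|=|Y\times Y|=|Y|$ is false, so the transport to a self-injection $g:Y\hookrightarrow Y$ is unavailable.

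The paper's proof avoids this by never leaving $M_{Y\times X}$. After observing, as you do, that $H(\inc\circ\iota_x)$ is an isomorphism (so $H(\inc)$ is a split epimorphism and $H(\iota_x)$ a split monomorphism), it shows directly that $H(\inc)$ is also a split monomorphism. One factors $\inc\otimes\iota_x=(\id_{M_Y}\otimes\iota_x)\circ\inc:M_XA\to M_YM_XA$ and then finds a permutation $\sigma$ of $Y\times X$ --- any bijection restricting to the coordinate swap $(a,x)\mapsto(x,a)$ on $X\times\{x\}$ --- for which $\ad(\sigma)\circ(\inc\otimes\iota_x)$ equals the $M_Y$-corner inclusion $M_XA\to M_YM_XA$. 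Since $\ad(\sigma)$ is a $*$-automorphism and the latter map is $H$-iso by $M_Y$-stability, $H(\inc\otimes\iota_x)$ is an isomorphism and hence $H(\inc)$ is a split monomorphism. The permutation $\sigma$ exists in all cases because $X\times\{x\}$ and $\{x\}\times X$ have the same cardinality inside $Y\times X$; no infiniteness is required. Your detour through $M_{Y\times Y}$ is thus unnecessary: staying inside $M_{Y\times X}$ yields a shorter argument that proves the lemma in its stated generality.
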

\begin{proof} We may assume that $X\subset Y$; write $\inc:M_X\subset M_Y$ for the canonical inclusion. Let $x\in X$ and and $\iota=\iota_x$. Because $H$ is $M_Y$-stable, $H(\inc\circ\iota)$ is an isomorphism. Hence $H(\iota)$ is a split monomorphism and $H(\inc)$ is a split epimorphism. Let $\tau:M_X\otimesl M_Y\iso  M_Y\otimesl M_X$, $\tau(a\otimes b)=b\otimes a$. We have $\tau\circ(\iota\otimes M_Y)\circ \inc=\inc\otimes\iota$. Let $\sigma:Y\times X\to Y\times X$ be any bijection which restricts to coordinate permutation on $X\times \{x\}$; also write $\sigma$ for the corresponding permutation matrix in $M_{Y\times X}=M_Y\otimesl M_X$. Then $\ad(\sigma)\circ (\inc\otimes\iota)=\iota\otimes M_X$. Hence
$H(\inc\otimes\iota)$ is an isomorphism, and thus $H(\inc)$ is a split monomorphism. This finishes the proof.
\end{proof}
Let $H:\ahas\to\fC$ be a functor and $A\in\ahas$. We say that $H$ is \emph{hermitian stable} on $A$ if for every $*$-ideal embedding $A\triqui R$ into a unital $*$-algebra $R$, every central unitary $\epsilon\in R$, and any two $\epsilon$-hermitian invertible elements $\Phi,\Psi\in R$, $H$ sends the upper left hand corner inclusion
\[
\iota^\Phi:A^\Phi\to (M_2A)^{\Phi\oplus\Psi}
\]
to an isomorphism.

\begin{prop}\label{prop:hermistab}
Assume that $\ell$ satisfies the $\lambda$-assumption \ref{stan:lambda}. Let $H:\ahas\to\fC$ be a functor and let $A\in\ahas$. If $H$ is $M_2$-stable on ${}_\epsilon M_2 A$ for every $*$-ideal embedding $A\triqui R$ and every central unitary element $\epsilon\in R$, then $H\circ M_{\pm}$ is hermitian stable on $A$.
\end{prop}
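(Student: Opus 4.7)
The plan is to reduce the claim to the hypothesized $M_2$-stability via conjugation by the Karoubi-type isomorphisms of \eqref{map:aduiso}. Given a $*$-ideal embedding $A\triqui R$, a central unitary $\epsilon\in R$, and $\epsilon$-hermitian invertibles $\phi,\psi\in R$, I will use the element $u\in M_2 R$ of \eqref{elu} attached to $\phi$, its analogue $u_\psi\in M_2 R$ for $\psi$, and the element $u'\in M_2(M_2 R)$ of \eqref{elu} attached to the $\epsilon$-hermitian invertible $\phi\oplus\psi\in M_2 R$. By \eqref{map:aduiso} these produce $*$-isomorphisms
\[
\ad(u):M_\pm A^\phi\iso {}_\epsilon M_2 A,\qquad \ad(u'):M_\pm(M_2 A)^{\phi\oplus\psi}\iso {}_\epsilon M_2 M_2 A.
\]

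The crux of the argument is the algebraic identity
\[
u'=u\otimes e_{11}+u_\psi\otimes e_{22}\quad\text{in }M_2\otimes M_2\otimes R,
\]
where $\otimes e_{ii}$ indicates insertion of $e_{ii}$ in the middle $M_2$-factor. This is immediate by comparing entries using $(\phi\oplus\psi)^*=\phi^* e_{11}+\psi^* e_{22}$, and because $e_{11},e_{22}$ are orthogonal idempotents summing to $1$, the same block-diagonal form holds for $u'^{-1}=u^{-1}\otimes e_{11}+u_\psi^{-1}\otimes e_{22}$. Armed with this, I will compute $\ad(u')\circ(M_\pm\iota^\phi)\circ\ad(u)^{-1}$ directly: on $y\in{}_\epsilon M_2 A$ it produces $u'[(u^{-1}yu)\otimes e_{11}]u'^{-1}$, and every cross term involving $u_\psi$ or $u_\psi^{-1}$ vanishes thanks to $e_{11}e_{22}=e_{22}e_{11}=0$ in the middle factor. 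What remains collapses to $y\otimes e_{11}$, so the conjugated map is simply the corner inclusion $g:{}_\epsilon M_2 A\to{}_\epsilon M_2 M_2 A$, $y\mapsto y\otimes e_{11}$, into the middle $M_2$.

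Finally, the factor-swap $\tau:M_2\otimes{}_\epsilon M_2\iso {}_\epsilon M_2\otimes M_2$ is a $*$-isomorphism, under which $g$ corresponds (after tensoring with $A$) to the standard corner inclusion ${}_\epsilon M_2 A\to M_2\otimes{}_\epsilon M_2 A$ featured in the definition of $M_2$-stability. The hypothesis makes $F$ of this inclusion an isomorphism, hence $F(g)$ is one too, and combining with the commutative square
\[
\xymatrix{
M_\pm A^\phi\ar[r]^-{M_\pm\iota^\phi}\ar[d]_{\ad(u)}^{\cong} & M_\pm(M_2 A)^{\phi\oplus\psi}\ar[d]^{\ad(u')}_{\cong}\\
{}_\epsilon M_2 A\ar[r]^-{g} & {}_\epsilon M_2 M_2 A
}
\]
yields that $F(M_\pm\iota^\phi)$ is an isomorphism, establishing hermitian stability of $F\circ M_\pm$ on $A$. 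The only real obstacle is keeping the three-fold tensor bookkeeping straight; once the decomposition of $u'$ is in hand, the cancellation driven by $e_{11}e_{22}=0$ makes the rest essentially automatic, and no appeal to the more delicate inner-automorphism Lemma \ref{lem:conju1} is needed.
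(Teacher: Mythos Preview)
Your proof is correct and follows essentially the same route as the paper: both reduce $M_\pm\iota^\phi$ to a standard corner inclusion into $M_2({}_\epsilon M_2 A)$ via the Karoubi isomorphisms \eqref{map:aduiso} and the factor-swap $*$-isomorphism ${}_\epsilon M_2 M_2\cong M_2({}_\epsilon M_2)$, then invoke the $M_2$-stability hypothesis. The only difference is that the paper simply asserts the commutative square (with your map $g$ written as ${}_\epsilon M_2\iota_1$), whereas you verify it explicitly via the block-diagonal decomposition $u'=u\otimes e_{11}+u_\psi\otimes e_{22}$; this extra computation is correct and supplies the detail the paper omits.
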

\begin{proof}
The isomorphisms $M_\pm A^\Phi\cong {}_\epsilon M_2A$ and
$M_\pm (M_2A)^{\Phi\oplus\Psi}\cong {}_\epsilon M_2M_2A$ of \eqref{map:aduiso} and the permutation isomorphism ${}_\epsilon M_2M_2\cong M_2({}_\epsilon M_2)$ fit into a commutative diagram
\[
\xymatrix{
M_{\pm}A^\Phi\ar[d]^{M_\pm\iota_\Phi}\ar[r]^\cong& {}_\epsilon M_2A\ar[d]^{{}_\epsilon M_2\iota_1}\ar[dr]^{\iota_1}&\\
M_{\pm}(M_2A)^{\Phi\oplus\Psi}\ar[r]_\cong&{}_\epsilon M_2M_2A\ar[r]_{\cong}&M_2({}_\epsilon M_2A)}
\]
The proposition is now immediate.
\end{proof}

\section{Hermitian \topdf{$K$}{K}-theories}\label{sec:kh}
\numberwithin{equation}{section}
Let $A$ be a $*$-ring; write
\[
\cU(A)=\{x\in A\,|\, xx^*=x^*x,\ \ x+x^*+xx^*=0\}.
\]
Observe $\cU(A)$ is a group under the operation
\[
x\cdot y=x+y+xy.
\]
If $A$ is unital then $\cU(A)$ is isomorphic to the group of unitary elements of $A$ via the map $a\mapsto 1+a$. 

Let $A\triqui R$ be a $*$-ideal embedding into a unital $*$-algebra $R$ and let $\epsilon\in R$ be a central unitary. Put
\[
{}_\epsilon \cO(A)=\cU({}_\epsilon M_2 M_\infty A).
\]
By \eqref{map:mepsilonpm} we have a group isomorphism
\begin{equation}\label{map:oepsilono1}
{}_\epsilon \cO(A)\cong {}_1 \cO({}_\epsilon M_2 A).
\end{equation}

%Let $X = \N$ and note $M_\infty = M_\N$, $\Gamma = \Gamma_\N$ and $\Sigma = \Sigma_\N$.
The \emph{$\epsilon$-hermitian $K$-theory groups} of a unital $*$-ring $A$ are the stable
homotopy groups of a spectrum ${}_\epsilon K^hA=\{{}_\epsilon K^hA_n\}$ whose $n$-th 
space is  ${}_\epsilon K^hA_n=\Omega B{}_\epsilon \cO(\Sigma^{n+1}A)^+$, the loopspace of 
the $+$-construction (\cite{loday}*{Section 3.1.6}). As usual we also write
\[ 
{}_{\epsilon}{K}_n^h(A) = \pi_n ({}_\epsilon K^hA ),\,\,(n \in \Z)
\]
for the $n$-th stable homotopy group. When $\epsilon = 1$ we drop it from the notation. For a nonunital $*$-ring $A$, we put
\[
{}_{\pm 1}K_n^h(A)=\ker ({}_{\pm 1}K_n^h(\tilde{A}_\Z)\to {}_{\pm}K_h^h(\Z)).
\]
If $A$ is unital, these groups agree with those defined above since hermitian $K$-theory of unital $*$-rings is additive. 
Recall that a ring $A$ is \emph{$K$-excisive} if for any embedding
$A\triqui R$ as an ideal of a unital ring $R$ and every unital homomorphism $R\to S$ mapping $A$ isomorphically onto an ideal of $S$, the map of relative $K$-theory spectra $K(R:A)\to K(S:A)$ is an equivalence.
The definition of $K^h={}_1K^h$-excisive $*$-ring is analogous.
Now assume that $A$ is a $K$-excisive ring that is a $*$-algebra over some commutative ring $\ell$ with involution that satisfies the $\lambda$-assumption \ref{stan:lambda} and let $A\triqui R$ be a $*$-ideal embedding into a unital $*$-algebra. %Further assume that $R$ satisfies the $\lambda$-assumption \ref{stan:lambda}, and 
Let $f:R\to S$ be a unital $*$-algebra homomorphism mapping %$\lambda$ to a central element and 
$A$ isomorphically onto a $*$-ideal of $S$. Let $\epsilon\in \ell$ be a central unitary. It follows straightforwardly from Karoubi's fundamental theorem \cite{karfund}*{Th\'eor\`eme} (see \cite{battikh}*{Corollary 3.5.1}), that the map ${}_\epsilon K^h(R:A)\to {}_\epsilon K^h(S:A)$ is an equivalence. In particular, if $A$ is $K$-excisive then it is also $K^h$-excisive. Taking all this into account, and assuming that $\ell$ satisfies the $\lambda$-assumption \ref{stan:lambda}, 
we set, for any  $K$-excisive $A\in\ahas$, any unitary $\epsilon\in\ell$ and $n\in\Z$, 
%\[
%{}_\epsilon K^h(A)=\hofi({}_\epsilon K^h(\tilde{A})\to {}_\epsilon K^h(\ell)).
%\]
%We write $K^h(A)={}_1K^h(A)$. By definition, for $K$-excisive $A$  
\begin{equation}\label{eq:khnnuni}
{}_\epsilon K_n^h(A)=\ker({}_\epsilon K_n^h(\tilde{A})\to {}_\epsilon K_n^h(\ell)).
\end{equation}
For $n\le 0$ and not necessarily $K$-excisive $A$, we take \eqref{eq:khnnuni} as a defintion. The nonpositive hermitian $K$-groups agree with Bass' quadratic $K$-groups \cite{bass} for the maximum form parameter. In particular, by \cite{bass}*{Chapter III, Theorem 1.1}, hermitian $K$-theory as defined above satisfies excision in nonpositive dimensions.  

\begin{rem}[$K^h_0$ and quasi-homomorphisms]\label{rem:qhoms}
For unital $R\in \ringa$, $K_0^h$ is the group completion of the monoid $\cV^h_\infty(R)$ of unitary conjugacy classes of self-adjoint idempotents in ${}_1M_2M_\infty R$. We may thus regard elements of $K_0^h(R)$ as differences of classes of $*$-homomorphisms $\Z\to {}_1M_2 M_\infty R$; this can be formalized in terms of quasi-homomorphisms. Let $A,B,C$ be $*$-rings such that $B \lhd C$. A \emph{quasi-homomorphism} from $A\to B$ through $C$, $(f_+,f_-):A\rightrightarrows C\rhd B$, consists of two $*$-homomorphisms $f_+,f_-: A \to C$ such that  $f_+(a)-f_-(a) \in B$ for all $a\in A$. Thus an element of $K_0^h(R)$ is a class of quasi-homomorphism $\Z\rightrightarrows {}_1M_2M_\infty R\rhd {}_1M_2M_\infty R$. Moreover it follows from \eqref{eq:khnnuni} that if $A$ is any $*$-ring then any element of $K_0^h(A)$ is represented by a quasi-homomorphism $\Z\rightrightarrows {}_1M_2 M_\infty \tilde{A}_\Z\rhd {}_1M_2 M_\infty A$. If $A\in\ahas$ then the same holds with $\ell$ substituted for $\Z$ and $\ell$-linear, $*$-quasi-homomorphisms. 
\end{rem}

For any, not necessarily unital $*$-ring $A$, and $\epsilon=\pm 1$, Karoubi and Villamayor also introduce hermitian $K$-groups for $n\ge 1$. They agree with the homotopy groups of the simplicial group ${}_\epsilon O(A^{\Delta})$ up to a degree shift
\[
{}_\epsilon KV_n^h(A)=\pi_{n-1}{}_\epsilon O(A^\Delta)\quad (n\ge 1).
\]
The argument of \cite{friendly}*{Proposition 10.2.1} shows that the definition above is equivalent to that given in \cite{kv2}; we have
\[ 
{}_\epsilon KV_{n+1}^h(A)={}_\epsilon KV^h_1(\Omega^nA)\qquad (n\ge 1). 
\]
Similarly, if $A$ is unital, for all $n\ge 1$ we have 
\begin{equation}\label{eq:kvpibo}
{}_\epsilon KV^h_n(A)=\pi_nB{}_\epsilon O(A^\Delta)=\pi_nB{}_\epsilon O(A^\Delta)^+=\pi_{n}\Omega B{}_\epsilon O(\Sigma A^\Delta)^+.
\end{equation}

Applying ${}_\epsilon K^h_n$ to the path extension \eqref{ext:path} and using excision, we obtain a natural map
\[
{}_\epsilon K^h_n(A)\to {}_\epsilon K^h_{n-1}(\Omega A) \quad (n\le 0).
\]
For $n\in\Z$, the $n^{th}$ \emph{homotopy $\epsilon$-hermitian $K$-theory} group of $A$ is
\[
{}_\epsilon KH^h_n(A)=\colim_m {}_\epsilon K^h_{-m}(\Omega^{m+n}A).
\]
One can also describe ${}_\epsilon KH^h_n$ in terms of ${}_\epsilon KV^h$; by \cite{kv2}*{Th\'eor\`eme 4.1}, ${}_\epsilon KV^h$ satisfies excision for the cone extension \eqref{ext:cone}. Hence we have a map 
\begin{equation}\label{map:partial}
{}_\epsilon KV^h_n(A)\to {}_\epsilon KV^{h}_{n+1}(\Sigma A).
\end{equation}
The argument of \cite{ct}*{Proposition 8.1.1} shows that
\begin{equation}\label{eq:khykv}
{}_\epsilon KH^h_n(A)=\colim_m {}_\epsilon KV^h_{n+m}(\Sigma^{m}A).
\end{equation}
Now assume that $A$ is unital; let ${}_\epsilon KH(A)$ be the total spectrum of the simplicial spectrum ${}_\epsilon K^h(A^{\Delta})$. We have
\[
\pi_n({}_\epsilon KH^h(A))=\colim_m \pi_{n+m}\Omega B{}_\epsilon O(\Sigma^m A^\Delta)^+=\colim_n{}_\epsilon KV^h_{n+m}(\Sigma^mA)={}_\epsilon KH^h_n(A).
\]

\begin{lem}\label{lem:khprops}
Homotopy hermitian $K$-theory is homotopy invariant, matricially stable and satisfies excision. 
\end{lem}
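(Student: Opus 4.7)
The plan is to derive each of the three assertions from a corresponding property of $K^h$ or $KV^h$, via the two descriptions of $KH^h$ presented immediately before the lemma:
\[
{}_\epsilon KH^h_n(A)=\colim_m {}_\epsilon K^h_{-m}(\Omega^{m+n}A)=\colim_m{}_\epsilon KV^h_{n+m}(\Sigma^m A).
\]

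Homotopy invariance follows from the $KV^h$-description. The functor $KV^h$ is defined via the simplicial $*$-algebra $A^\Delta$, and the map $A[t]\to A^\Delta$ sending $t$ to $t_1$ induces a simplicial homotopy equivalence at the level of $B\,{}_\epsilon\cO(-)^+$ (compare the argument of \cite{friendly}*{Proposition 10.2.1}), so $KV^h$ sends the inclusion $A\to A[t]$ to an isomorphism. Since $\Sigma(A[t])=(\Sigma A)[t]$, the corresponding property is inherited by $KH^h$ through the colimit. For matricial stability, the key fact is that ${}_\epsilon\cO(A)=\cU({}_\epsilon M_2 M_\infty A)$ already involves $M_\infty$; together with the isomorphism $M_\infty\otimesl M_\infty\cong M_\infty$, this implies that the corner inclusion $A\to M_\infty A$ induces a homotopy equivalence at the $K^h$-level. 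This stability propagates through both colimits to give $M_\infty$-stability of $KH^h$, and Lemma \ref{lem:xley} then yields $M_X$-stability for any $X$ with $|X|\le\aleph_0$.

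Excision is where the actual work lies. Let $A\into B\onto C$ be a semi-split extension in the sense of Subsection \ref{subsec:under}. Since it is linearly split (possibly after averaging, as observed at the end of Subsection \ref{subsec:under}), tensoring with $\Omega^{k}$ preserves exactness, and $\id_{\Omega^k}$ tensored with a splitting in $\fU$ gives a splitting of $\Omega^k B\onto\Omega^k C$; thus the shifted sequence $\Omega^k A\into\Omega^k B\onto\Omega^k C$ is again a semi-split extension. Because $K^h$ satisfies excision in nonpositive degrees (by Bass, as recalled just before the lemma), for each fixed $m\ge 0$ and each $n\in\Z$ the shifted extension $\Omega^{m+n}A\into\Omega^{m+n}B\onto\Omega^{m+n}C$ induces a long exact sequence of the groups ${}_\epsilon K^h_{-m-j}(\Omega^{m+n}-)$ for $j\ge 0$, whose connecting morphisms are induced by the path extension \eqref{ext:path}. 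By naturality of these connecting maps with respect to the structure maps of the colimit defining $KH^h$, the sequences assemble into a compatible diagram indexed by $m$; passing to the filtered colimit preserves exactness and yields the desired long exact sequence for $KH^h$.

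The principal obstacle is the naturality verification in the excision step: one must check that the boundary maps of the $K^h_{-m}$ six-term sequences commute with the bonding maps $K^h_{-m}(\Omega^{m+n}-)\to K^h_{-m-1}(\Omega^{m+n+1}-)$ coming from the path extension. This is formal, following from the functoriality of connecting homomorphisms of $K^h$ applied to the commutative diagram of path extensions of $A$, $B$ and $C$, but needs to be laid out carefully before exactness of filtered colimits can be invoked.
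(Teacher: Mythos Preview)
Your proposal is essentially correct and follows the standard route (the one the paper defers to via \cite{friendly}*{Theorem 5.1.1}): deduce homotopy invariance from the $KV^h$-description, matricial stability from the built-in $M_\infty$ in ${}_\epsilon\cO$, and excision from Bass' excision for $K^h$ in nonpositive degrees, passing to the filtered colimit.

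One minor inaccuracy in the excision paragraph: you write that a semi-split extension ``is linearly split (possibly after averaging, as observed at the end of Subsection \ref{subsec:under})'', but that passage says the \emph{reverse} implication --- a linearly split extension is semi-split for any choice of $\fU$, under the $\lambda$-assumption. If $\fU$ is sets, semi-split need not imply $\ell$-linearly split, and at this point in the paper the $\lambda$-assumption has not yet been imposed anyway. Fortunately this does not damage the argument: $\Omega=t(t-1)\ell[t]$ is a free $\ell$-module, so tensoring with $\Omega^k$ preserves short exact sequences regardless of any splitting; and since Bass' result gives excision for $K^h$ in nonpositive degrees for \emph{all} extensions (not just semi-split ones), you do not need the tensored sequence to be semi-split either. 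So you can simply drop the appeal to linear splitting and replace it with flatness of $\Omega$, and the rest of your excision argument goes through unchanged.
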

\begin{proof}
The same argument as in \cite{friendly}*{Theorem 5.1.1} shows this.
\end{proof}

In the proof of the following lemma we use an argument communicated to the first author by Max Karoubi.

\begin{lem}\label{lem:khregular}
Assume that $2$ is invertible in $\ell$. Let $n\in\Z$ and let $A\in\ahas$ be $K_n$-regular. 

\item[i)] If either $n\le 0$ or $A$ is $K$-excisive, then
$A$ is $K^h_n$-regular and $K^h_m(A)\to KH^h_m(A)$ is an isomorphism  for $m\le n$. 
\item[ii)] If $n=0$ then $KV_{m}^hA\to KH^h_{m}A$ is an isomorphism for all $m\ge 1$.
\item[iii)] If $n\ge 1$ and $A$ is $K$-excisive, then $KV_{n+1}^h(A)\to KH_{n+1}^h(A)$ is an isomorphism. 
\end{lem}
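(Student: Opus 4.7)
My plan proceeds in two stages. In the first (Karoubi-style) stage I will establish that $A$ is $K^h_m$-regular for all $m\le n$; in the second I will use a spectral sequence associated to the total spectrum $KH^h(A)=|K^h(A^\Delta)|$ to deduce $K^h_m(A)\cong KH^h_m(A)$ in that range. Parts (ii) and (iii) will follow by applying the same kind of argument to the $+$-construction description of $KV^h$ from \eqref{eq:kvpibo}. For Stage 1, I rely on the fact that when $2\in\ell^\times$ there are hyperbolic/forgetful exact sequences at the level of ordinary $K^h$ (not only $KH^h$), whose composites $\forg\circ\hyp$ and $\hyp\circ\forg$ are of the form $1+\sigma$ for the involution $\sigma$ induced on $K$-theory by the $*$-structure. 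These sequences express $K^h_*(A)$ in terms of $K_*$ of $A$ and of flat auxiliary $*$-algebras $L\otimesl A$, each of which inherits $K_n$-regularity from $A$; a five-lemma diagram chase along the sequences then gives $K^h_n$-regularity. When $n>0$, the $K$-excisivity hypothesis is needed so that $K^h$-excision is available in positive degree.

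For Stage 2, recall from the paragraph preceding the lemma that for unital $A$, $KH^h(A)$ is the total spectrum of the simplicial spectrum $[p]\mapsto K^h(A^{\Delta^p})$, so there is a strongly convergent homotopy spectral sequence
\begin{equation*}
E^2_{p,q}=\pi_p\bigl(K^h_q(A^{\Delta^\bullet})\bigr)\Longrightarrow KH^h_{p+q}(A).
\end{equation*}
By Stage 1, for $q\le n$ the simplicial abelian group $p\mapsto K^h_q(A^{\Delta^p})$ is constant equal to $K^h_q(A)$, so $E^2_{p,q}=0$ for $p>0$, and the edge map gives $K^h_m(A)\cong KH^h_m(A)$ for $m\le n$. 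The non-unital case reduces to the unital one via unitalization combined with $K^h$-excision; alternatively, for $n\le 0$, use the definition $KH^h_n(A)=\colim_m K^h_{-m}(\Omega^{m+n}A)$, each term of which stabilizes by $K^h$-regularity. This completes (i).

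For (ii), apply the analogous argument to the simplicial space $[p]\mapsto B{}_\epsilon\cO(A^{\Delta^p})^+$, whose homotopy groups yield $KV^h$ via \eqref{eq:kvpibo}: $K_0$-regularity of $A$ gives $K^h_0$-regularity by Stage 1, forcing the relevant simplicial homotopy groups to be constant and yielding $KV^h_m(A)\cong KH^h_m(A)$ for every $m\ge 1$. Part (iii) is the same mechanism in degree $n+1$, with $K$-excisivity replacing the role that nonpositivity of the degree plays when extracting $K^h$-regularity from Stage 1. The main obstacle is Stage 1 itself: making precise the hyperbolic/forgetful decomposition of $K^h_*$ at the finite (not homotopy-invariant) level and tracking regularity through the auxiliary tensor factors. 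This is exactly what the Karoubi-style argument handles, exploiting the invertibility of $2$ both to split the maps $1+\sigma$ and to average forms.
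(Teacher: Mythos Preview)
Your Stage~2 is sound: once $K^h_q$-regularity is known for all $q\le n$, either the spectral sequence you describe or, as the paper does, the colimit description $KH^h_m(A)=\colim_r K^h_{-r}(\Omega^{r+m}A)$ together with the arguments of \cite{friendly}*{Proposition 5.2.3} and \cite{kh}*{Proposition 1.5} yields the comparison statements in (i)--(iii).

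The gap is in Stage~1. The long exact sequences attached to $\hyp$ and $\forg$ do \emph{not} express $K^h_*(A)$ in terms of ordinary $K$-theory of algebras of the form $L\otimesl A$: their remaining terms are Karoubi's ${}_\epsilon U$- and ${}_\epsilon V$-groups, i.e.\ the (co-)Witt groups ${}_\epsilon W_*$ and ${}_\epsilon W'_*$, which are hermitian invariants, not $K_*$ of anything. In the paper's notation these cofibers are $K^h_*(U A)$ and $K^h_*(V A)$, still \emph{hermitian} $K$-theory; so ``$UA$ inherits $K_n$-regularity from $A$ because $U$ is flat'' gives you only $K_n$-regularity of $UA$, whereas the five-lemma needs $K^h_n$-regularity of $UA$, which is the very thing being proved. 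Concretely, when $2\in\ell^\times$ your identities $\forg\circ\hyp=1+\sigma$ and $\hyp\circ\forg=1+T$ do give a splitting $K^h_*(A)\cong K_*(A)^{\sigma}\oplus W_*(A)$; regularity of the first summand follows from $K_n$-regularity of $A$, but regularity of the Witt summand $W_*$ does not, and no amount of eigenspace bookkeeping produces it.

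What the paper does instead is import two external facts and combine them via Karoubi's $12$-term sequence: (a)~when $2\in\ell^\times$ the Tate-cohomology terms $k_*(A)$, $k'_*(A)$ in that sequence vanish (by \cite{mvsnk}*{Corollary 3.3}); and (b)~Karoubi's theorems \cite{karloca}*{Corollaire 3.10} and \cite{kar343}*{Th\'eor\`eme I.1.1} that ${}_\epsilon W_0$ and ${}_\epsilon W'_0$ are homotopy invariant. Feeding (a) and (b) into the $12$-term sequence shows that $W_m$ is homotopy invariant for \emph{all} $m$; only then does the diagram chase you envisage go through. Your Stage~1 needs these two ingredients inserted.
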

\begin{proof}
First assume that $A$ is unital. Consider Karoubi's $12$-term exact sequence \cite{karfund}*{Th\'eor\`eme 4.3}. Because $2$ is invertible in $\ell$, by \cite{mvsnk}*{Corollary 3.3}, $A$ is regular with respect to the functors $k_*$ and $k'_*$ of Karoubi's sequence.%\edit{regular, no $0$} Using this and the fact that by \cite{karloca}*{Corollaire 3.10} and \cite{kar343}*{Th\'eor\`eme I.1.1}, the Witt and co-Witt groups ${}_\epsilon W_0$ and ${}_{\epsilon}W'_0$ are homotopy invariant for all $\epsilon$, it follows that $W_m={}_1W_m$ is homotopy invariant for all $m$. Hence  by diagram chasing $A$ is $K_n^h$-regular whenever $A$ is, at least for unital $A$. By \cite{battikh}*{Corollary 3.5.1}, if $A$ is $K$-excisive then it is also $K^h$-excisive. By \cite{corel}*{Proposition A.5.3}, $A[t_1,\dots,t_n]$ is $K$-excisive whenever $A$ is. Using excision, we see that if $A$ is $K_n$-regular and either $n\le 0$ or $A$ is $K$-excisive then it is also $K^h_n$-regular. If $n\le 0$ and $A$ is $K^h_n$-regular, then $K_m^h(A)\to K_{m-1}^h(\Omega A)$ is an isomorphism for all $m\le n$, so $K_m^h(A)\to KH^h_m(A)$ is an isomorphism. The argument of \cite{friendly}*{Proposition 5.2.3} shows that if  $A$ is $K_0$-regular then $KV^h_m(A)\to KH_m^h(A)$ is an isomorphism for $m\ge 1$. If $A$ is unital, we reach the same conclusions also for $n>0$, using the argument of \cite{kh}*{Proposition 1.5}.  The case when $A$ is excisive and $n>0$ follows from the unital case using excision. 
\end{proof}

\begin{lem}\label{lem:kepsilon}
Let $\epsilon\in\ell$ be unitary. If either $n\le 0$ or $A$ is $K^h$-excisive, then there is a canonical isomorphism
\[
{}_\epsilon K_n^h(A)\cong K^h_n({}_\epsilon M_2 A).
\]
Moreover for all $A\in\ahas$ we have canonical isomorphisms 
\goodbreak
${}_\epsilon KV^h_n(A)\cong KV^h_n({}_\epsilon M_2 A)$ $(n\ge 1)$ and ${}_\epsilon KH^h_n(A)\cong KH^h_n({}_\epsilon M_2 A)$ $(n\in \Z)$.
\end{lem}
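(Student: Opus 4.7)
The plan is to reduce everything to the natural group isomorphism
\[
{}_\epsilon \cO(A)\cong {}_1\cO({}_\epsilon M_2 A)
\]
of \eqref{map:oepsilono1}, which is induced by the $*$-isomorphism ${}_\epsilon M_2 M_\infty A\cong {}_1 M_2 M_\infty ({}_\epsilon M_2 A)$ coming from \eqref{eq:pm=12}, \eqref{map:mepsilonpm} applied to $X=\N$, and which is evidently natural in $A\in\ahas$. Combined with the obvious identification ${}_\epsilon M_2(A^{\Delta^n})=({}_\epsilon M_2 A)^{\Delta^n}$, the same isomorphism holds levelwise for the simplicial $*$-algebra $A^\Delta$, yielding an isomorphism of simplicial groups ${}_\epsilon\cO(A^\Delta)\cong {}_1\cO(({}_\epsilon M_2 A)^\Delta)$. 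Taking $\pi_{n-1}$ gives the identification ${}_\epsilon KV_n^h(A)\cong KV^h_n({}_\epsilon M_2 A)$ for $n\ge 1$.

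For the $KH^h$ assertion I would apply the formula \eqref{eq:khykv}: since ${}_\epsilon M_2$ commutes with suspension up to a natural $*$-isomorphism (both functors are given by tensor product with a fixed $*$-algebra), the $KV$-isomorphism of the previous paragraph gives
\[
{}_\epsilon KH^h_n(A)=\colim_m {}_\epsilon KV^h_{n+m}(\Sigma^m A)\cong \colim_m KV^h_{n+m}(\Sigma^m {}_\epsilon M_2 A)=KH_n^h({}_\epsilon M_2 A),
\]
valid for every $n\in\Z$ and every $A\in\ahas$.

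For ${}_\epsilon K^h$ in the unital case, the natural group isomorphism applied to $\Sigma^{n+1} R$ induces a weak equivalence of spaces at each level of the spectrum defining hermitian $K$-theory, so ${}_\epsilon K^h(R)\simeq K^h({}_\epsilon M_2 R)$ and hence ${}_\epsilon K^h_n(R)\cong K^h_n({}_\epsilon M_2 R)$ for all $n$. For general nonunital $A$, we have the $*$-algebra extension
\[
\xymatrix{{}_\epsilon M_2 A\ar@{ >-}[r]&{}_\epsilon M_2\tilde{A}\ar@{->>}[r]&{}_\epsilon M_2\ell}
\]
which is split as a sequence of $*$-algebras (since $\tilde{A}=A\oplus\ell$ is). Applying $K^h_n$ together with the unital case yields
\[
\ker\bigl({}_\epsilon K^h_n(\tilde{A})\to {}_\epsilon K^h_n(\ell)\bigr)\cong \ker\bigl(K^h_n({}_\epsilon M_2\tilde{A})\to K^h_n({}_\epsilon M_2\ell)\bigr),
\]
whose left-hand side is ${}_\epsilon K^h_n(A)$ by \eqref{eq:khnnuni}.

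The remaining point is to identify this last kernel with $K^h_n({}_\epsilon M_2 A)$; this is where the hypothesis ``$n\le 0$ or $A$ is $K$-excisive'' enters. For $n\le 0$, excision for $K^h$ in nonpositive dimensions (cited from \cite{bass}*{Chapter III, Theorem 1.1} in the preceding discussion) gives the identification. For arbitrary $n$, if $A$ is $K$-excisive then so is ${}_\epsilon M_2 A$, and hence ${}_\epsilon M_2 A$ is $K^h$-excisive by \cite{battikh}*{Corollary 3.5.1}; so in both cases the split extension above gives
\[
K^h_n({}_\epsilon M_2 A)\cong \ker\bigl(K^h_n({}_\epsilon M_2\tilde{A})\to K^h_n({}_\epsilon M_2\ell)\bigr),
\]
completing the argument. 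The subtlest point is precisely this last identification, which is why the hypothesis is imposed only for ${}_\epsilon K^h_n$ and not for $KV^h_n$ or $KH^h_n$; for the latter two the naturality of the $\cO$-isomorphism propagates mechanically through the definitions without requiring excision.
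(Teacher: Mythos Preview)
Your argument is essentially the same as the paper's: both reduce everything to the group isomorphism ${}_\epsilon\cO(A)\cong {}_1\cO({}_\epsilon M_2 A)$, feed it into the various constructions ($\Sigma^r$, $A^\Delta$, $B(-)^+$), and handle the nonunital case by split-exactness. Your organization is more explicit (separate paragraphs for $KV^h$, $KH^h$, $K^h$), whereas the paper compresses all three into ``apply to $\Sigma^r A^{\Delta^m}$''.

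The one point you skip, and which the paper singles out, is the word \emph{canonical}. The $*$-isomorphism \eqref{map:mepsilonpm} underlying \eqref{map:oepsilono1} is not god-given: it depends on a choice of bijection $\{1,2\}\times\N\to\N$ (and, in the paper's presentation, on the element $\lambda$). Naturality in $A$ holds for any fixed choice, but the lemma asserts more, namely that the resulting $K$-theory map does not depend on which choice you made. The paper settles this by invoking \cite{loday}*{Lemme 1.2.7}: two such choices differ by conjugation by an element of the infinite unitary group, and such conjugations induce the identity on $B\cO(-)^+$ up to homotopy. You should add a sentence to this effect; otherwise you have produced \emph{an} isomorphism, not a canonical one.

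Your handling of the nonunital $K^h$ case is a bit more detailed than the paper's (you separate $n\le 0$ from the $K$-excisive case and explicitly note that ${}_\epsilon M_2 A=M_2A$ as a ring, hence inherits $K$-excision), but this is fine and arguably clearer.
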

\begin{proof}
The isomorphism \eqref{map:oepsilono1} comes from \eqref{map:mepsilonpm}, which is canonical up to the choices of an element $\lambda\in\ell$ in the $\lambda$-assumption \ref{stan:lambda} and a bijection $\{1,2\}\times X\to X$. By \cite{loday}*{Lemme 1.2.7}, if $A$ is unital, then varying those choices has no effect on the homotopy type of the induced isomorphism $B({}_\epsilon O(A))^+\cong B ({}_1 O({}_\epsilon M_2 A))^+$. Applying this to $\Sigma^rA^{\Delta ^m}$ $(r\in \Z, m\ge 0)$, we obtain the statement of the lemma for unital $A$. The nonunital case follows from the unital one using split-exactness.
\end{proof}

\section{Homotopy hermitian \topdf{$K$}{K}-theory and cup products}\label{sec:khprod}

Hermitian $K$-theory of unital $*$-rings is equipped with products \cite{loday}*{Chapitre III}. Using that $K^h$ satisfies excision in nonpositive dimensions we obtain, for $R,A\in \ahas$ with $R$ unital, $m\in\Z$ and $n\le 0$, a natural product
\begin{equation}\label{map:prodk0}
{K}_m^h(R) \otimes{K}_n^h(A) \xrightarrow{\star} {K}_{m+n}^h(R\otimesl A).
\end{equation}
If moreover $m\le 0$, we also obtain the product above for not necessarily unital $R$.

\begin{lem}\label{lem:prodindex}
Let $R,S\in\ahas$ be unital and let $I\triqui S$ be a $*$-ideal. Assume that the sequence
\[
0\to R\otimesl I\to R\otimesl S\to R\otimesl (S/I)\to 0
\]
is exact. Let $\partial$ be the index map. Then the following diagram commutes
\[
\xymatrix{K^h_0(R)\otimes K^h_1(S/I)\ar[r]^{\star}\ar[d]^{1\otimes\partial}& K^h_1(R\otimesl(S/I))\ar[d]^{\partial}\\
          K^h_0(R)\otimes K^h_0(I)\ar[r]^{\star}& K_0^h(R\otimesl I).}
\]

\end{lem}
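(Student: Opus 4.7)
My plan is to deduce the commutativity by unwinding both composites to explicit representatives and then invoking the naturality of the index map.

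\textbf{Step 1 (representatives and product formula).} Using Remark \ref{rem:qhoms}, I represent a class of $K^h_0(R)$ by a self-adjoint idempotent $e\in {}_1M_2M_\infty\tilde R$ (modulo the trivial idempotent corresponding to its image under the augmentation $\tilde R\to\ell$). Associated with $e$ is the $*$-homomorphism
\[
\mu_e:A\longrightarrow e\bigl({}_1M_2M_\infty\tilde R \otimesl A\bigr)e,\qquad a\mapsto e\otimes a,
\]
which is natural in $A\in\ahas$. The first thing I would verify is that, under the matricial and hermitian stability isomorphism $K^h_*\bigl(e({}_1M_2M_\infty\tilde R\otimesl A)e\bigr)\cong K^h_*(R\otimesl A)$, the pushforward $(\mu_e)_*$ coincides with the cup product $[e]\star-$. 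For $[x]\in K^h_0(A)$ this is the very definition of multiplication in $K^h_0$ via composition of idempotent representatives; for the remaining cases (in particular $n=1$) one extends it by naturality of $\star$ and by the characterization of $K^h_1(S/I)$ through the index map from the path/cone extension, where $(\mu_e)_*$ and $[e]\star-$ commute with $\partial$ by construction. By linearity it suffices to handle a single $e$.

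\textbf{Step 2 (morphism of extensions).} Because $\mu_e$ is functorial in $A$, applying it to $0\to I\to S\to S/I\to 0$ produces the morphism of extensions
\[
\xymatrix@C=1em{
I \ar[r]\ar[d]^{\mu_e} & S \ar[r]\ar[d]^{\mu_e} & S/I \ar[d]^{\mu_e} \\
e({}_1M_2M_\infty\tilde R\otimesl I)e \ar[r] & e({}_1M_2M_\infty\tilde R\otimesl S)e \ar[r] & e({}_1M_2M_\infty\tilde R\otimesl(S/I))e.
}
\]
The bottom row is a sub-extension of the tensor extension with ${}_1M_2M_\infty\tilde R$, which is exact by the hypothesis that $0\to R\otimesl I\to R\otimesl S\to R\otimesl(S/I)\to 0$ is exact (and, since corners by a self-adjoint idempotent preserve exactness, the $e$-corner is exact too).

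\textbf{Step 3 (naturality of $\partial$).} The index map $\partial$ is natural with respect to morphisms of exact sequences of $*$-algebras, so applying it to the diagram of Step 2 gives
\[
\xymatrix{
K^h_1(S/I)\ar[r]^\partial\ar[d]^{(\mu_e)_*} & K^h_0(I)\ar[d]^{(\mu_e)_*}\\
K^h_1(\mu_e(S/I))\ar[r]^\partial & K^h_0(\mu_e(I)).
}
\]
Via the identification of Step 1 this becomes exactly the diagram of the lemma.

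\textbf{Main obstacle.} The crux is Step 1: verifying that the external product $[e]\star-$ genuinely realizes as the corner pushforward $(\mu_e)_*$ after stabilization. The product \eqref{map:prodk0} is built indirectly out of the obvious pairing on $K^h_0$ together with excision (and, in positive degree, out of a suspension identification), so one must trace through its construction to see that on an idempotent representative it reduces to tensoring with $e$. A secondary difficulty is the hermitian bookkeeping: the lifts of unitaries needed to describe $\partial$ explicitly must be handled so that self-adjointness is preserved, which ultimately rests on the $\lambda$-assumption through the usual averaging trick, and the identification of $K^h_*$ of the $e$-corner with $K^h_*(R\otimesl-)$ absorbs the trivial summand coming from the unitalization $\tilde R\to\ell$. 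Once these identifications are secured, the lemma is purely a naturality statement for the index map.
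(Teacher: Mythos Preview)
Your strategy---realize $[e]\star(-)$ as the pushforward $(\mu_e)_*$ along a $*$-homomorphism of extensions, then invoke naturality of $\partial$---is different from the paper's and is a perfectly reasonable line of attack. The paper instead works entirely at the level of explicit representatives: for a self-adjoint idempotent $p$ and a unitary $g$ it uses Loday's formula $[g]\star[p]=[g\otimes p+\mathbbm{1}\otimes(\mathbbm{1}-p)]$, writes the index map as $\partial[g]=[h\mathbbm{1}h^{-1}]-[\mathbbm{1}]$ for a unitary lift $h$ of $g\oplus g^{-1}$, and simply observes that $h\otimes p+\mathbbm{1}\otimes(\mathbbm{1}-p)$ is a lift of $(g\oplus g^{-1})\star p$, so the two sides agree on the nose. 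No corner algebras, no naturality argument---just one well-chosen lift.

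There is, however, a genuine gap in your Step~1. Your justification that $[e]\star(-)=(\mu_e)_*$ on $K_1^h$ appeals to ``the characterization of $K^h_1$ through the index map from the path/cone extension, where $(\mu_e)_*$ and $[e]\star-$ commute with $\partial$ by construction.'' For $(\mu_e)_*$ this is indeed naturality. But for $[e]\star(-)$ it is exactly the statement of the lemma specialized to that extension; you are assuming the conclusion for one extension in order to prove it for another. (And the path/cone extension does not give an \emph{isomorphism} $K_1^h\cong K_0^h(\Omega-)$ for $K^h$ itself, only for $KV^h$ or $KH^h$, so even the reduction is not clean.) If instead you cite a general compatibility of Loday's product with boundary maps, you have already proved the lemma and the corner-algebra detour is superfluous.

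The non-circular fix is to verify Step~1 for $K_1^h$ directly: $(\mu_e)_*$ sends a unitary $u$ to $e\otimes u$ in the corner, which under the stabilization inclusion becomes $e\otimes u+(\mathbbm{1}-e)\otimes\mathbbm{1}$---precisely Loday's $[e]\star[u]$. That one-line check is the whole content of the paper's argument; once you grant it, your Steps~2--3 go through, but at that point you have essentially rederived the paper's proof in a more elaborate wrapper.
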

\begin{proof}
Because $R$ is unital, we may regard $K_0^h(R)$ as the group completion of the monoid $\cV^h_\infty(R)$ as in Remark \ref{rem:qhoms}.%\edit{cita remark}
If $g,p\in {}_1M_2 M_n(S/I)$ are a unitary and a self-adjoint idempotent, and $\mathbbm{1}_n\in {}_1M_2M_n$ is the identity matrix, then $[g]\star [p]=[g\otimes p+\mathbbm{1}_n\otimes (\mathbbm{1}_n-p)]$. For a lift $h\in \cU({}_1M_2 M_{2n}S)$ of $g\oplus g^{-1}$, the index map sends $[g]$ to $\partial[g]=[h\mathbbm{1}_nh^{-1}]-[\mathbbm{1}_n]$; choosing the lift $h\otimes p+\mathbbm{1}_{2n}\otimes (\mathbbm{1}_{2n}-(p\oplus p))$,
we obtain $\partial([g]\star [p])=\partial[g]\star [p]$.%\edit{$\star$, no $\mu$}
\end{proof}
\begin{lem}\label{lem:prod-sigmaomega}
Let $m\in\Z$, $n\le 0$ and $R,A\in\ahas$ with $R$ unital. Let $\partial$ be the connecting map associated to the path extension \eqref{ext:path}. Assume that $\max\{n, m+n\}\le 0$. Then the following diagram commutes.
\[
\xymatrix{K_m^h(R)\otimes K_n^h(A)\ar[r]^\star\ar[d]^{1\otimes\partial}& K^h_{m+n}(R\otimesl A)\ar[d]^\partial\\
K_m^h(R)\otimes K_{n-1}^h(\Omega A)\ar[r]^\star& K^h_{m+n-1}(R\otimesl \Omega A)}
\]
\end{lem}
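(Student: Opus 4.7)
The plan is to reduce the statement to Lemma \ref{lem:prodindex} via a suspension argument. Since the path extension \eqref{ext:path} admits an involution-preserving $\ell$-linear splitting (Examples \ref{exas:ext}), tensoring it with the unital $*$-algebra $R$ preserves exactness, producing what is canonically the path extension of $R \otimesl A$:
\[
0 \to \Omega(R \otimesl A) \to P(R \otimesl A) \to R \otimesl A \to 0.
\]
Hence both vertical arrows in the diagram of the lemma are genuine path boundaries.

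The base case of the reduction, namely the compatibility of $\star$ with the index map $K_1^h \to K_0^h$ of this tensored extension, is precisely Lemma \ref{lem:prodindex}. The general case $n \le 0$ (with $m \in \Z$ and $m + n \le 0$) is obtained from it by iterated suspension via the cone extension \eqref{ext:cone}. The cone extension is $\ell$-linearly split, and $\Gamma = \Gamma_{\mathbb N}$ admits an Eilenberg swindle and is therefore $K^h$-acyclic in all nonpositive degrees. The associated excision long exact sequence (which exists in nonpositive dimensions) then gives natural suspension isomorphisms $K_k^h(X) \iso K_{k+1}^h(\Sigma X)$ valid for all $X \in \ahas$ and $k \le 0$. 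Iterating produces identifications $K_n^h(A) \cong K_1^h(\Sigma^{1-n} A)$ and $K_{n-1}^h(\Omega A) \cong K_0^h(\Sigma^{1-n}\Omega A)$, together with analogous shifts for $K_m^h(R)$ and $K_{m+n}^h(R \otimesl A)$; under these, the path boundary $\partial$ corresponds to the index map of the path extension of $\Sigma^{1-n} A$.

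The final ingredient is that the product $\star$ commutes with each of these suspension isomorphisms. This is itself another instance of Lemma \ref{lem:prodindex}, applied to the cone extension tensored with $R$, combined with matricial stability of $K^h$ (identifying $K_k^h(M_\infty X) \cong K_k^h(X)$). Concatenating the commutative squares thus produced with the base case yields the square in the statement.

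The main obstacle will be the bookkeeping required to show that each square in this chain commutes and that the product is natural across every suspension identification; once this is set up, all the required compatibilities reduce to instances of Lemma \ref{lem:prodindex} for $\ell$-linearly split extensions that are preserved by tensor product with $R$.
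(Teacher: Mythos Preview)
Your approach is different from the paper's and has two genuine gaps.

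The paper does not reduce to Lemma~\ref{lem:prodindex} by shifting degrees via suspension. Instead it proves a sharper statement: for every $i\le 0$ the boundary $\partial:K_i^h(A)\to K_{i-1}^h(\Omega A)$ \emph{is} the cup product $(-)\star\partial([1])$ with a single fixed class $\partial([1])\in K_{-1}^h(\Omega)$. This is obtained by one application of Lemma~\ref{lem:prodindex} with the unital choices $S=\Sigma^{-i-1}\ell[t]$, $I=\Sigma^{-i-1}\Omega$, $R=\tilde A$, together with the comparison of the path and polynomial extensions via the map of extensions displayed at the start of the paper's proof. Once $\partial$ is identified with a cup product, the diagram of the lemma commutes by associativity of $\star$:
\[
\partial(\xi\star\zeta)=(\xi\star\zeta)\star\partial([1])=\xi\star(\zeta\star\partial([1]))=\xi\star\partial(\zeta),
\]
and this works uniformly for all $m\in\Z$.

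Your route tries instead to bring both slots into the degree pattern $(0,1)$ of Lemma~\ref{lem:prodindex} by iterated cone--suspension isomorphisms. Two things go wrong. First, you assert $K_0^h(X)\cong K_1^h(\Sigma X)$ for arbitrary $X$, justified by ``the excision long exact sequence (which exists in nonpositive dimensions)''. But that sequence does not reach $K_1^h$; for nonunital $X$ neither the boundary $K_1^h(\Sigma X)\to K_0^h(M_\infty X)$ nor the vanishing of $K_1^h(\Gamma X)$ is supplied by excision in degrees $\le 0$, so the last step of your iteration is unjustified. Second, Lemma~\ref{lem:prodindex} only has $K_0^h(R)$ in the first variable, while the present lemma allows $m>0$ (e.g.\ $m=3$, $n=-5$). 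Suspension raises degree, so you cannot shift $K_m^h(R)$ down to $K_0^h$ by this mechanism; your ``analogous shifts for $K_m^h(R)$'' do not exist in that direction. The paper's identification of $\partial$ as a cup product is exactly what sidesteps both difficulties, since associativity then handles all $m$ at once without any degree shifting in the $R$-variable.
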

\begin{proof}
Let $\jmath_2:\ell\to \ell\oplus\ell$%\edit{$\jmath_2$, no $1$} be the inclusion in the second summand. The extensions \eqref{ext:path} and \eqref{ext:poly} are connected by a map of extensions
\[
\xymatrix{\Omega\ar@{=}[d]\ar[r]&P\ar[d]^{\inc}\ar[r]^{\ev_1}& \ell\ar[d]^{\jmath_2}\\
          \Omega\ar[r]& \ell[t]\ar[r]^{(\ev_0,\ev_1)}& \ell\oplus\ell}
\]
Let $i\le 0$. Applying Lemma \ref{lem:prodindex} with $S=\Sigma[t]$, $I=\Sigma\Omega$ and $R=\Sigma^{-i}\tilde{A}$, and using naturality and excision, we obtain that the boundary map $\partial:K_i^h(A)\to K_{i-1}^h(\Omega A)$ is the cup product with $\partial([1])\in K^h_{-1}(\Omega)$. The proof now follows from associativity of $\star$. 
\end{proof}
\begin{coro}\label{coro:prod-sigmaomega}
Let $R,A\in\ahas$ with $R$ unital and let $m,n\in\Z$. 

\item[i)] There is an associative product
\[
\star:K^h_m(R)\otimes KH^h_n(A)\to KH^h_{m+n}(R\otimesl A). 
\]
\item[ii)] Let $c_*:K^h_*(R)\to KH^h_*(R)$ be the comparison map. Then for all $m\in\Z$ and $\xi\in K^h_m(R)$, $c_m(\xi)=\xi\star c_0([1])$. 
\end{coro}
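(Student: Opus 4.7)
For (i), I unfold the colimit definition $KH^h_n(A)=\colim_k K^h_{-k}(\Omega^{k+n}A)$: given $\xi\in K^h_m(R)$ and a stage-$k$ representative $\eta\in K^h_{-k}(\Omega^{k+n}A)$, I form $\xi\star\eta\in K^h_{m-k}(R\otimesl\Omega^{k+n}A)$ via \eqref{map:prodk0}. The natural identification $R\otimesl\Omega^jA\cong\Omega^j(R\otimesl A)$ coming from $\Omega A=\Omega\otimesl A$ places the result in $K^h_{-(k-m)}(\Omega^{(k-m)+(m+n)}(R\otimesl A))$, i.e., at stage $k-m$ of $KH^h_{m+n}(R\otimesl A)$. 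The construction is defined for $k\ge\max(0,m,-n)$, so that all indices involved lie in the range where \eqref{map:prodk0} is available.

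To see this descends to a well-defined pairing on the colimit, note that advancing from stage $k$ to stage $k+1$ in $KH^h_n(A)$ replaces $\eta$ with $\partial\eta$, and Lemma \ref{lem:prod-sigmaomega} asserts precisely $\partial(\xi\star\eta)=\xi\star\partial\eta$ under its hypothesis $\max(-k,m-k)\le 0$, i.e.\ for $k\ge\max(0,m)$. Thus both $\xi\star\eta$ and $\xi\star\partial\eta$ represent the same element of $KH^h_{m+n}(R\otimesl A)$ at late stages, and the pairing descends. Associativity of the induced product is immediate from associativity of \eqref{map:prodk0} at each finite stage.

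For (ii), $c_m(\xi)\in KH^h_m(R)$ is represented at stage $k$ by the iterated path-extension boundary $\partial^{k+m}\xi\in K^h_{-k}(\Omega^{k+m}R)$, while by part (i), $\xi\star c_0([1])$ at the same stage is represented by $\xi\star\partial^{k+m}[1]\in K^h_{-k}(\Omega^{k+m}R)$, where $\partial^{k+m}[1]$ is the stage-$(k+m)$ representative of $c_0([1])\in KH^h_0(\ell)$. Granting the single-step identity $\partial\alpha=\alpha\star\partial[1]$ in arbitrary degree, induction gives $\partial^j\alpha=\alpha\star\partial^j[1]$, and associativity then yields $\partial^{k+m}\xi=\xi\star(\partial[1])^{\star(k+m)}=\xi\star\partial^{k+m}[1]$, which is (ii).

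The main obstacle is therefore to extend the single-step identity $\partial\alpha=\alpha\star\partial[1]$ beyond the nonpositive-degree range already handled in the proof of Lemma \ref{lem:prod-sigmaomega}. My plan is to adapt the quasi-homomorphism computation of Lemma \ref{lem:prodindex}, which computes $\partial$ at the level of representatives by an explicit conjugation with a lift of a unitary, and verifies the cup-product compatibility by a formula that does not depend on the degree of the auxiliary factor. Representing an arbitrary class $\alpha\in K^h_m(R)$ through such a quasi-homomorphism (in the style of Remark \ref{rem:qhoms}, after appropriate iterated suspension and use of $M_\infty$-stability), applying Lemma \ref{lem:prodindex} to a unitalized enveloping extension, and then invoking naturality and excision to transfer the resulting identity back to the path extension of $R$, should yield $\partial\alpha=\alpha\star\partial[1]$ in every degree, completing the proof of (ii).
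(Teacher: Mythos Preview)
Your treatment of part (i) is fine and matches the paper's one-line justification: the product descends to the colimit by Lemma~\ref{lem:prod-sigmaomega}. For part (ii) with $m\le 0$ your argument is also correct and essentially the paper's.

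The gap is in part (ii) for $m\ge 1$. You assert that $c_m(\xi)$ is represented at stage $k$ by $\partial^{k+m}\xi\in K^h_{-k}(\Omega^{k+m}R)$, and then reduce everything to proving $\partial\alpha=\alpha\star\partial[1]$ in positive degrees. But the path-extension boundary $\partial:K^h_n\to K^h_{n-1}(\Omega-)$ is only set up in the paper for $n\le 0$ (where excision for $K^h$ is available); for $n\ge 1$ there is no such map in general, so neither the iterated $\partial^{k+m}$ nor the identity you are trying to extend is even defined. Your proposed workaround via Lemma~\ref{lem:prodindex}, quasi-homomorphisms, and ``naturality and excision'' does not address this: Lemma~\ref{lem:prodindex} is a $K^h_1\to K^h_0$ computation, and Remark~\ref{rem:qhoms} only represents $K^h_0$-classes; there is no mechanism here for producing a boundary map out of $K^h_m$ for $m\ge 1$.

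The paper avoids this obstacle by factoring $c_m$ through Karoubi--Villamayor theory: for $m\ge 1$ one has
\[
c_m:\ K^h_m(R)\xrightarrow{c'_m} KV^h_m(R)\longrightarrow KV^h_1(\Omega^{m-1}R)\longrightarrow K^h_0(\Omega^mR)\longrightarrow KH^h_m(R),
\]
where the intermediate boundaries are the $KV^h$-boundaries (which exist in all positive degrees), and the final map is the transfinite composite of $\partial[1]\star-$. The multiplicativity then follows from $c'_*(x\star y)=c'_*(x)\star y$ for $y\in K^h_{-1}(\Omega)$, which is a statement about the comparison $K^h\to KV^h$, not about a nonexistent positive-degree $K^h$-boundary. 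If you want to salvage your approach, you would need to replace the positive-degree applications of $\partial$ by this $KV^h$-factorization.
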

\begin{proof}
Part i) is immediate from Lemma \ref{lem:prod-sigmaomega} upon taking colimits. For $m\le 0$, part ii) is clear from the construction of $\star$ and the definition of $KH^h$. For $m\ge 1$, $c_m$ factors as the comparison map $c'_m:K^h_m(R)\to KV^h_m(R)$ followed by the iterated boudary map $KV_m^h(R)\to KV_1^h(\Omega^{m-1}R)\to K_0^h(\Omega^m R)$ followed by the canonical map $K_0^h(\Omega^mR)\to KH_0^h(R)$, which is the transfinite composition of $-\star\partial[1] $. So part ii) follows from the fact that $c'(x\star y)=c'(x)\star y$
for all $x\in K^h_*(R)$ and $y\in K_{-1}^h(\Omega)$. 
\end{proof} 
\begin{lem}\label{lem:prodkhh}
Let $A,B \in \ialgl$ and $m,n \in \Z$. Then \eqref{map:prodk0} induces an associative product
\[
\xymatrix{
{KH}_m^h(A) \otimes {KH}_n^h(B)\ar[r]^\star &
{KH}_{m+n}^h(A\otimesl B).
}
\]
If $m\le 0$ or $A$ is unital, then the following diagram commutes
\[
\xymatrix{K^h_m(A)\otimes KH_n^h(B)\ar[r]^\star\ar[d]^{c_m\otimes 1}& KH^h_{m+n}(B)\\
KH^h_m(A)\otimes KH_n^h(B)\ar[ur]^\star&}
\]
\end{lem}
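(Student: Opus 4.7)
My plan is to build the product directly from the algebraic colimit definition \eqref{defi:khh}, $KH^h_m(A)=\colim_j K^h_{-j}(\Omega^{m+j}A)$. For fixed $j\ge\max(-m,0)$ and $k\ge\max(-n,0)$ one has $-j,-k\le 0$, so \eqref{map:prodk0} applies without any unitality hypothesis; combining it with the natural identification $\Omega^{m+j}A\otimes_\ell\Omega^{n+k}B\cong \Omega^{m+j+n+k}(A\otimes_\ell B)$ (valid because $\Omega^p C=\Omega^{\otimes p}\otimes_\ell C$, up to a reordering of $\Omega$-factors by transposition) gives level-wise pairings
\[
\mu_{j,k}:K^h_{-j}(\Omega^{m+j}A)\otimes K^h_{-k}(\Omega^{n+k}B)\to K^h_{-j-k}(\Omega^{m+j+n+k}(A\otimes_\ell B)).
\]
The sought-after product is obtained by passing to the colimit over $(j,k)$, provided the $\mu_{j,k}$ are compatible with the structure maps $\partial$ on both factors.

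The key input for the compatibility is the same identification used in the proof of Corollary \ref{coro:prod-sigmaomega}: by Lemma \ref{lem:prodindex} together with naturality and excision, the transition map $\partial:K^h_{-j}(\Omega^{m+j}C)\to K^h_{-j-1}(\Omega^{m+j+1}C)$ coincides with right cup product by the distinguished class $\partial[1]\in K^h_{-1}(\Omega)$. Hence $\partial\bigl(\mu_{j,k}(x\otimes y)\bigr)=\mu_{j,k}(x\otimes y)\star\partial[1]$, and by associativity of $\star$ at the $K^h$-level this equals both $\mu_{j+1,k}(\partial x\otimes y)$ and $\mu_{j,k+1}(x\otimes\partial y)$, modulo the identifications between different orderings of $\Omega$-tensor factors; these reorderings are implemented by permutations and induce the identity on $K^h$ by $M_X$-stability. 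Associativity of the resulting product on $KH^h$ is immediate from the associativity of each $\mu_{j,k}$.

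For the compatibility diagram, the case $m\le 0$ is tautological: by construction $c_m:K^h_m(A)\to KH^h_m(A)$ is the canonical map from the $j=-m$ entry of the colimit, and the level-$(-m,k)$ pairing is precisely the restriction of the $KH^h$-pairing. For unital $A$ and $m>0$, Corollary \ref{coro:prod-sigmaomega}(ii) gives $c_m(\xi)=\xi\star c_0([1])$, so that
\[
c_m(\xi)\star y=(\xi\star c_0([1]))\star y=\xi\star (c_0([1])\star y)=\xi\star y,
\]
where the last equality uses that $c_0([1])\in KH^h_0(\ell)$ is a unit for the $KH^h$-pairing -- a feature built in, since $[1]\in K^h_0(\ell)$ is a unit for \eqref{map:prodk0} at every level of the colimit.

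The main obstacle is the Leibniz-type compatibility $\partial\circ\mu_{j,k}=\mu_{j+1,k}\circ(\partial\otimes\id)=\mu_{j,k+1}\circ(\id\otimes\partial)$; once $\partial$ is presented as cup product with $\partial[1]$ the identity reduces to associativity of $\star$, but one has to keep careful bookkeeping of the symmetries permuting the $\Omega$-tensor factors so that the level-wise pairings truly assemble into a morphism of the two colimit systems.
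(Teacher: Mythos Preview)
Your approach is essentially the paper's: both identify the transition map $\partial$ with cup product by $\partial[1]\in K^h_{-1}(\Omega)$, reduce compatibility of the level-wise pairings to associativity of \eqref{map:prodk0} in nonpositive degrees, and invoke Corollary \ref{coro:prod-sigmaomega} for the comparison with $K^h$. The organizational difference is minor: the paper first treats $m=n=0$ by checking compatibility only along the diagonal $(j,k)\mapsto(j+1,k+1)$ (i.e.\ $\partial\otimes\partial$ on the source against $\partial^2$ on the target), takes the colimit along that cofinal diagonal, and then reaches arbitrary $m,n$ by applying loops and suspensions to $A$ and $B$; your direct double-colimit formulation is equivalent and arguably cleaner.

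One correction: the reorderings of $\Omega$-factors are \emph{not} handled by $M_X$-stability --- that notion concerns corner inclusions into matrix algebras and says nothing about the symmetry $\Omega\otimesl C\cong C\otimesl\Omega$. What actually makes the bookkeeping work is that this transposition is an isomorphism of the relevant path extensions, so by naturality of the connecting homomorphism the two placements of the new $\Omega$-factor agree under it. The paper's own proof glosses over this point as well.
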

\begin{proof}
As observed above, the boundary map $\partial:K^h_*\to K^h_{*-1}\circ\Omega$ is the cup product with $\partial[1]\in K_{-1}^h(\Omega)$. It follows that the following diagram commutes for all $r\le 0$
\[
\xymatrix{K^h_{r}(A)\otimes K^h_r(B)\ar[d]^{\partial\otimes\partial}\ar[r]^\star&K^h_{2r}(A\otimesl B)\ar[d]^{\partial^2}\\
K^h_{r-1}(\Omega A)\otimes K^h_{r-1}(\Omega B)\ar[r]^\star &K^h_{2r-2}(\Omega^2 A\otimesl B).
}
\]
Taking colimit along the columns we get the desired product map for $r=s=0$. The general case is obtained from the latter applying the suspension and loop functors as many times as appropriate. Commutativity of the diagram of the Lemma follows from Corollary \ref{coro:prod-sigmaomega}. 
\end{proof}

\begin{coro}\label{coro:kh-mod}
Let  $A \in \ialgl$ and $n \in \Z$, then ${}_\epsilon KH_n^h(A)$ is a $KH_0^h(\ell)$-module.
\end{coro}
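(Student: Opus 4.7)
The corollary is essentially a direct consequence of the two preceding lemmas. The plan is to reduce the ${}_\epsilon$ case to the case $\epsilon = 1$ via Lemma \ref{lem:kepsilon}, and then extract the module structure from the special case of the cup product in Lemma \ref{lem:prodkhh} in which one of the two tensor factors is $\ell$.

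Concretely, I would first apply Lemma \ref{lem:prodkhh} with $A = B = \ell$ and $m = n = 0$, which endows $KH_0^h(\ell)$ with an associative $\Z$-bilinear product $\star$, hence a ring structure. I would then apply the same lemma with the first algebra equal to $\ell$ in degree $0$ and the second equal to ${}_\epsilon M_2 A$ in degree $n$, yielding a $\Z$-bilinear pairing
$$KH_0^h(\ell) \otimes_\Z KH_n^h({}_\epsilon M_2 A) \xrightarrow{\star} KH_n^h(\ell \otimes_\ell {}_\epsilon M_2 A) = KH_n^h({}_\epsilon M_2 A).$$
Associativity of $\star$ gives the axiom $(xy) \star z = x \star (y \star z)$, and bilinearity yields the two distributive laws. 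Pulling this back along the canonical isomorphism ${}_\epsilon KH_n^h(A) \cong KH_n^h({}_\epsilon M_2 A)$ of Lemma \ref{lem:kepsilon} would then equip ${}_\epsilon KH_n^h(A)$ with the desired $KH_0^h(\ell)$-module structure.

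The only point not automatic from Lemma \ref{lem:prodkhh} is the unit axiom, namely that the multiplicative identity of $KH_0^h(\ell)$ acts as the identity. I would take the unit to be $u := c_0([1_\ell])$, the image under the comparison map of the class of the trivial rank-one hermitian form; it is clearly a unit for $\star$ on $K_0^h(\ell)$, since at the level of idempotents cupping with $[\ell]$ is the identity. That $u \star -$ is the identity on $KH_n^h(B)$ then follows by unwinding the algebraic definition of $KH^h$ as a colimit of loop-desuspended $K^h$-groups, in the same spirit as Corollary \ref{coro:prod-sigmaomega}(ii). This unit check, together with the routine transport of module axioms along the isomorphism of Lemma \ref{lem:kepsilon}, is the main (albeit very minor) bookkeeping obstacle.
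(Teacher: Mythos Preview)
Your proposal is correct and matches the paper's intended argument: the corollary is stated without proof, as an immediate consequence of Lemma~\ref{lem:prodkhh} (specialized to $A=\ell$, $m=0$) together with the identification ${}_\epsilon KH_n^h(A)\cong KH_n^h({}_\epsilon M_2 A)$ of Lemma~\ref{lem:kepsilon}. Your discussion of the unit axiom via $c_0([1])$ and Corollary~\ref{coro:prod-sigmaomega}(ii) is more careful than the paper itself, which leaves this point implicit.
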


\section{Stabilization and homotopy}\label{sec:stabhom}
\begin{stan} From now on we shall assume that $\ell$ satisfies the $\lambda$-assumption \ref{stan:lambda}. 
\end{stan}

%subsection{\topdf{$\ind$-$*$}{Ind}-algebras and homotopy}
An \emph{$\ind$-object} in a category $\fC$ is a pair $(C,I)$ consisting of an upward filtered poset $I$ and a functor
$C:I\to \fC$. The $\ind$-objects of $\fC$ form a category $\ind-\fC$ where homomorphisms are given by
\[
\hom_{\ind-\fC}((C,I),(D,J))=\lim_i\colim_j\hom_{\fC}(C_i,D_j).
\]
Any functor $F:\fC\to\fD$ extends to $\ind-\fC\to \ind\fD$ by applying it indexwise; $F(C)_i=F(C_i)$. We shall also use the dual concept of a \emph{pro-object} in a category; $\pro-\fC=(\ind-(\fC)^{\op})^{\op}$%\edit{pro-C}. In Examples \ref{exas:invos} we have already introduced, for a simplicial set $X$ and a $*$-algebra $A$, the $*$-algebra
$A^X$; if $X$ is finite, then $A^X=A\otimes \Z^{X}$, by \cite{ct}*{Proposition 3.1.3}.%\edit{cita tenso} By iteration of the \emph{simplicial subdivision functor} $\sd:\fS\to\fS$ and the natural \emph{last vertex map}
$\sd\to\id_{\fS}$ (see \cite{GJ}), one obtains a pro-simplicial set $\sdi X=\{\sd^nX\}$, and thus an $\ind$-$*$-algebra 
$A^{\sdi X}$. For $S^1=\Delta^1/\partial\Delta^1$ we write
\begin{equation}
\cP A=A^{\sdi\Delta^1},\quad , A^{\S^1}=A^{\sdi(S^1,pt)},\quad A^{\S^{n+1}}=A^{\S^n}\otimes\Z^{\S^1} \quad(n\ge 0).
\end{equation}
Observe that the two endpoint inclusions $\Delta^0\to \Delta^1$ induce inclusions $\Delta^0\to\sdi(\Delta^1)$ and evaluation maps $\ev_i: A^{\sdi \Delta^1}\to A$, $i=0,1$. Two $*$-algebra homomorphisms $f_0,f_1:A\to B$ are \emph{$*$-homotopic} if there is a homomorphism $H:A\to B^{\sdi \Delta^1}$ in $\ind-\ahas$ such that $\ev_iH=f_i$ for $i=0,1$;
*-homotopy is an equivalence relation, compatible with composition. We write $[A,B]^*$ for the set of homotopy classes of $*$-homomorphisms and $[\ahas]$ for the category with the same objects as $\ahas$ with $\hom_{[\ahas]}(A,B)=[A,B]^*$.%\edit{$[]^*$} 
If 
$A,B\in\ind-\ahas$, we write $[A,B]^*=\hom_{\ind-[\ahas]}(A,B)$. 

Let $C\in\aha$ and let $A,B\subset C$ be subalgebras. If $u,v\in C$ satisfy $uAv\subset B$ and $avua'=aa'$ for all $a,a'\in A$, then 
\begin{equation}\label{map:aduv}
\ad(u,v):A\to B,\quad a\mapsto uav    
\end{equation}
is an algebra homomorphism. In this situation, we say that the pair $(u,v)$ \emph{multiplies $A$ into $B$}. Let $(u_i,v_i)\in C^2$ ($i=0,1$) be pairs multiplying $A$ into $B$. A \emph{homotopy} between $(u_0,v_0)$ and $(u_1,v_1)$ is a pair $(u(t),v(t))\in C[t]^2$ which multiplies $A$ into $B[t]$, and such that $(u(i),v(i))=(u_i,v_i)$. In this case $\ad(u(t),v(t)):A\to B[t]$ is a homomorphism and a homotopy between  $\ad(u_0,v_0)$ and $\ad(u_1,v_1)$.  Now suppose that $C$ is a $*$-algebra and that $A,B$ are $*$-subalgebras; if $u\in C$ and $(u,u^*)$ multiplies $A$ into $B$, then $\ad(u,u^*)$ is a $*$-homomorphism, and we say that $u$ \emph{$*$-multiplies} $A$ into $B$. If $v\in C$ is another element with the same property, a \emph{$*$-homotopy} from $u$ to $v$ is an element $w(t)\in C[t]$ $*$-multiplying $A$ to $B[t]$ such that $w(0)=u$ and $w(1)=v$.
We shall often encounter examples of elements $u_0,u_1\in C$ which $*$-multiply  $A$ into $B$ and which are homotopic via a pair $(u,v)$ with $v\neq u^*$ so that the homotopy $\ad(u,v)$ is not a $*$-homomorphism. We shall see in Lemma \ref{lem:fixhomo} below that this can be fixed upon stabilization.

Write $i_+$ and $i_-$ for the upper left and lower right corner inclusions $\ell\to M_{\pm}$; both are $*$-homomorphisms. 

\begin{lem}\label{lem:fixhomo}
Let $C\in\ahas$, let $A,B\subset C$ be $*$-subalgebras and let $u_0,u_1\in C$ $*$-multiply $A$ into $B$. Let $(v,w)\in C[t]^2$ be a homotopy of multipliers from $(u_0,u_0^*)$ to $(u_1,u_1^*)$. Assume that 
\begin{equation}\label{wav}
w^*Aw\subset B[t]\supset vAv^*.    
\end{equation}
Let $\lambda\in\ell$ be as in \eqref{lambda}. Then the element 
\[
c(v,w)=\left[\begin{matrix}\lambda^*v+\lambda w^*&\lambda^*(v-w^*)\\
\lambda(v-w^*)&\lambda v+\lambda^*w^*\end{matrix}\right]
\]
 $*$-multiplies $\iota_+(A)$ into $M_{\pm}B[t]$, and $\ad(c(v,w),c(v,w)^*)\circ\iota_+$ is a $*$-homotopy from $\iota_+\circ\ad(u_0,u_0^*)$ to $\iota_+\circ\ad(u_1,u_1^*)$.
\end{lem}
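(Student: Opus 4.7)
My plan is to recast $c(v,w)$ in a form that exposes the algebraic structure behind its entries. Define
\[
U = \begin{bmatrix} \lambda^* & \lambda^* \\ \lambda & \lambda \end{bmatrix}, \qquad V = \begin{bmatrix} \lambda & -\lambda^* \\ -\lambda & \lambda^* \end{bmatrix} \quad \text{in } M_{\pm}\ell.
\]
A direct comparison of entries shows $c(v,w) = Uv + Vw^*$. Computing the $M_{\pm}$-involution gives $U^* = V$ and $V^* = U$, so $c(v,w)^* = Vv^* + Uw$. The identity $\lambda + \lambda^* = 1$ supplies the four relations that drive the entire argument: $U + V = I$, $U^2 = U$, $V^2 = V$, and $UV = VU = 0$. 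In short, $U$ and $V$ are a pair of orthogonal idempotents summing to the identity, swapped by the $M_{\pm}$-involution.

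With these relations in hand the three required verifications become mechanical. For the endpoint condition at $t=0$ we have $v(0) = u_0$ and $w(0) = u_0^*$, so $c|_{t=0} = Uu_0 + Vu_0 = (U+V)u_0 = \operatorname{diag}(u_0,u_0)$; conjugation of $\iota_+(a)$ by this matrix returns $\iota_+(u_0 a u_0^*) = \iota_+\operatorname{ad}(u_0,u_0^*)(a)$, and the same argument applies at $t = 1$. For the condition $c\,\iota_+(a)\,c^* \in M_{\pm}B[t]$, expanding the product produces four terms of the form (scalar matrix in $M_2\ell$)$\otimes x$ with $x$ running over $vav^*,\ vaw,\ w^*av^*,\ w^*aw$; these lie in $B[t]$ respectively by hypothesis \eqref{wav}, by the homotopy-of-multipliers condition on $(v,w)$, by the same condition applied to $a^* \in A$ followed by taking adjoints (since $(va^*w)^* = w^*av^*$), and again by \eqref{wav}.

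For the isometry condition, the orthogonality-and-idempotency relations collapse the cross-product $c^*c = (Vv^* + Uw)(Uv + Vw^*)$ to $c^*c = U\,wv + V\,(wv)^*$. Sandwiching by $\iota_+(a)$ and $\iota_+(a')$ reduces matters to the corner identities $e_{11}Ue_{11} = \lambda^* e_{11}$ and $e_{11}Ve_{11} = \lambda e_{11}$, producing
\[
\iota_+(a)\,c^*c\,\iota_+(a') = \lambda^*\iota_+(awva') + \lambda\,\iota_+(a(wv)^*a').
\]
The multiplier condition gives $awva' = aa'$, and applying the same condition to $(a')^*, a^* \in A$ and then taking adjoints gives $a(wv)^*a' = aa'$; the two summands therefore add to $(\lambda + \lambda^*)\iota_+(aa') = \iota_+(aa')$, as required.

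The main obstacle is really the initial step: spotting the decomposition $c(v,w) = Uv + Vw^*$ with $(U,V)$ an orthogonal pair of idempotents summing to $I$, swapped by the $M_{\pm}$-involution. Without this rewriting one is facing opaque $2\times 2$ matrix identities in which the role of the $\lambda$-assumption is obscured. Once the decomposition is in place the three verifications use only the relations $U + V = I$, $U^2 = U$, $V^2 = V$, $UV = VU = 0$, $U^* = V$, together with the corner identities $e_{11}Ue_{11} = \lambda^* e_{11}$ and $e_{11}Ve_{11} = \lambda e_{11}$, and each reduces to a short bookkeeping exercise.
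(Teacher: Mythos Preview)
Your proof is correct and arrives at the same key identity the paper uses, namely $c(v,w)^*c(v,w)=c(wv,wv)$, which in your notation reads $c^*c = U\,wv + V\,(wv)^*$. The paper simply asserts this identity (``One checks that $c(v,w)^*c(v,w)=c(wv,wv)$'') and then extracts the $(1,1)$ corner exactly as you do, obtaining $\iota_+(a)c^*c\,\iota_+(a')=\iota_+(a(\lambda^*wv+\lambda(wv)^*)a')=\iota_+(aa')$; the remaining checks (that $c\,\iota_+(a)\,c^*\subset M_\pm B[t]$ using \eqref{wav}, and the endpoint evaluations $c|_{t=i}=\diag(u_i,u_i)$) are handled identically.

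What you add is the decomposition $c(v,w)=Uv+Vw^*$ with $U,V\in M_\pm\ell$ a pair of orthogonal idempotents summing to $1$ and exchanged by the $M_\pm$-involution. This is not a different route so much as an explanation of \emph{why} the paper's bare identity holds: the relations $U^2=U$, $V^2=V$, $UV=VU=0$, $U^*=V$ turn the verification of $c^*c=c(wv,wv)$ into a two-line computation rather than an opaque $2\times 2$ matrix check, and they make the role of the $\lambda$-assumption transparent (it is precisely what makes $U+V=1$ and $U^2=U$). The paper's proof is shorter on the page; yours is more informative.
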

\begin{proof}
One checks that $c(v,w)^*c(v,w)=c(wv,wv)$. Hence if $a,a'\in A$, then
\begin{align*}
\iota_+(a)c(v,w)^*c(v,w)\iota_+(a')=&\iota_+(a(\lambda^*wv+\lambda (wv)^*)a')\\
&=\iota_+(aa').
\end{align*}
Similarly one checks, using \eqref{wav}, that  $c(v,w)\iota_+(a)c(v,w)^*\subset M_{\pm}B[t]$. Thus $H=\ad(c(v,w))\circ\iota_+:A\to M_{\pm}B[t]$ is a $*$-homomorphism. Moreover for $i=0,1$,
$\ev_i(c(v,w))=c(u_i,u_i)=\diag(u_i,u_i)$. Hence $\ev_i\circ H=\iota_+\ad(u_i,u_i^*)$. 
\end{proof}
Let $p,q\ge 0$ and $n=p+q$. Set
\[
\iota_+^{p,q}:=M_{\pm}^p\otimes\iota_+\otimes M_{\pm}^q:M^n_{\pm}\to M^{n+1}_{\pm}.
\]
\begin{lem}
Let $p,q$ and $n$ be as above, and let $p', q'\ge 0$ be such that $p'+q'=n+1$. Then $\iota_+^{p',q'}\iota_+^{p,q}$ is $*$-homotopic to $\iota_+^{0,n+1}\iota_+^{0,n}$.
\end{lem}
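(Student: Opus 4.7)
The plan is to reformulate the statement combinatorially and then reduce it, via an induction on tensor-factor positions, to a single local $*$-homotopy which I would produce using Lemma \ref{lem:fixhomo}. Unpacking the definitions, $\iota_+^{p,q}$ inserts $e_{11}$ at tensor position $p+1$ while keeping the $n$ inputs in order, so $\iota_+^{p',q'}\iota_+^{p,q}$ is a map $\phi_{i,j}:M_\pm^{\otimes n}\to M_\pm^{\otimes n+2}$ placing two copies of $e_{11}$ at a pair of positions $(i,j)$ with $1\le i<j\le n+2$, and distributing $a_1,\ldots,a_n$ over the remaining $n$ slots in order. As $(p,q,p',q')$ varies every admissible pair $(i,j)$ is realised, and the target $\iota_+^{0,n+1}\iota_+^{0,n}$ is exactly $\phi_{1,2}$. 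The claim thus amounts to asserting that $\phi_{i,j}$ is $*$-homotopic to $\phi_{1,2}$ for every admissible pair.

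I would prove this by induction on $i+j$, decreasing either $i$ or $j$ by $1$ at each step via an adjacent transposition of tensor positions. Each such transposition swaps a slot holding $e_{11}$ with an adjacent slot holding some input $a_k$. After pulling out the unaffected tensor factors, the problem reduces to the local key case: the two $*$-homomorphisms $\alpha,\beta:M_\pm\to M_\pm\otimes M_\pm$ given by $\alpha(a)=a\otimes e_{11}$ and $\beta(a)=e_{11}\otimes a$ are $*$-homotopic after one further $\iota_+$-stabilization. The extra $M_\pm$-factor needed for this stabilization is provided free of charge by the second corner inclusion already present in $\iota_+^{p',q'}\iota_+^{p,q}$, which is exactly why the statement concerns compositions of two corner maps rather than a single one.

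For the local step, I would exploit that $\beta=\ad(\tau)\circ\alpha$, where $\tau=\sum_{i,j}e_{ij}\otimes e_{ji}\in M_\pm^{\otimes 2}$ is the tensor swap; $\tau$ is a $*$-unitary, since $\tau^T=\tau$ commutes with $h_\pm\otimes h_\pm$, so $\tau^*=\tau$ and $\tau^2=1$. Realising $\alpha=\ad(u_0,u_0^*)$ and $\beta=\ad(u_1,u_1^*)$ for suitable multipliers $u_0,u_1$ built from $e_{11}$ and $\tau$, and interpolating linearly to obtain a (not necessarily $*$-preserving) polynomial homotopy of multipliers $(v(t),w(t))$ from $(u_0,u_0^*)$ to $(u_1,u_1^*)$, Lemma \ref{lem:fixhomo} delivers the desired $*$-homotopy after $\iota_+$-stabilization, which is then absorbed by the ambient composition. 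The main obstacle I expect is engineering the multipliers $u_0,u_1$ correctly: naive candidates such as $1\otimes e_{11}$ do not satisfy the multiplier axiom $avua'=aa'$ literally, so some care is needed (for instance by passing to a suitable unitalization or by realising $\alpha$ and $\beta$ inside $M_\pm^{\otimes 3}$ via partial-isometry-like elements), and then in verifying that the containment condition \eqref{wav} holds along the whole interpolation. Once the local step is in place, the inductive bookkeeping of how the stabilization factor matches up with the second corner inclusion at each move is routine.
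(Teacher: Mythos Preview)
Your overall plan---reduce to a local swap of two adjacent tensor slots, then invoke Lemma \ref{lem:fixhomo} with the extra $M_\pm$-factor supplied by the second corner inclusion---is exactly the paper's strategy. Your combinatorial reformulation via the maps $\phi_{i,j}$ is essentially the paper's observation that $\iota_+^{r,s+1}\iota_+^{r,s}=\iota_+^{r+1,s}\iota_+^{r,s}$, and your induction on $i+j$ plays the role of the paper's use of tensor-permutation $*$-isomorphisms $\sigma$.

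The gap is in the local step. Your proposed conjugator $\tau=\sum_{i,j}e_{ij}\otimes e_{ji}$ has determinant $-1$ in $M_4$, so over any reduced ground ring there is \emph{no} polynomial path of invertibles from $1$ to $\tau$; hence you cannot feed Lemma \ref{lem:fixhomo} a homotopy of multipliers $(v,w)$ with $w=v^{-1}$, and---as you correctly anticipate---attempting to get by with non-invertible multipliers such as $1\otimes e_{11}$ runs into the failure of the axiom $awva'=aa'$ (a direct check shows linear interpolation of $\tau$ does not satisfy it either). The paper avoids this obstruction by using not $\tau$ but the signed permutation
\[
u=e_{(1,1),(1,1)}-e_{(1,2),(2,1)}+e_{(2,1),(1,2)}+e_{(2,2),(2,2)}\in M_\pm^{2},
\]
which is unitary for the $M_\pm^{2}$-involution, still satisfies $\ad(u)\iota_+^{1,0}=\iota_+^{0,1}$, has determinant $+1$, and is connected to $1$ by an explicit invertible path $u(t)$ (cited from \cite{ct}). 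With that in hand, Lemma \ref{lem:fixhomo} applies with $A=B=C=M_\pm^{2}$, $u_0=1$, $u_1=u$, $v(t)=u(t)$, $w(t)=u(t)^{-1}$, and the conditions \eqref{wav} are automatic. Replacing $\tau$ by this $u$ makes your argument go through without further changes.
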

\begin{proof}
We have $\iota_+^{0,0}=\iota_+$ and $\iota_+^{1,0}\iota_+=\iota_+^{0,1}\iota_+$. Next observe that, under the identification $\ell^2\otimesl \ell^2=\ell^{\{1,2\}^2}$,  $\iota_+^{1,0}(e_{i,j})=e_{(i,1),(j,1)}$ and $\iota_+^{0,1}(e_{i,j})=e_{(1,i),(1,j)}$. One checks that the matrix 
\[
u=e_{(1,1),(1,1)}-e_{(1,2),(2,1)}+e_{(2,1),(1,2)}+e_{(2,2),(2,2)}
\]
is a unitary element of $M^2_{\pm}$ and satisfies
$\ad(u)\iota_+^{1,0}=\iota_+^{0,1}$. Moreover by \cite{ct}*{Section 6.4}, there exists an invertible element  $u(t)\in M^2_{\pm}$ such that $u(0)=1$
and $u(1)=u$. Hence the composites of $\iota_+^{0,2}$ with $\iota_+^{1,0}$ and $\iota_+^{0,1}$ are $*$-homotopic by Lemma \ref{lem:fixhomo}. Tensoring on both sides with identity maps, we get that $\iota_+^{p,q+2}\iota_+^{p+1,q}\sim^*\iota_+^{p,q+2}\iota_+^{p,q+1}$. Let $p',q'\ge 0$ such that $p'+q'=n+1$. Permuting factors in the tensor product $M_{\pm}^{n+1}=M_{\pm}^{\otimesl n+1}$ we obtain a $*$-isomorphism $\sigma:M_{\pm}^{n+1}\to M_{\pm}^{n+1}$ such that $\sigma\iota_+^{p,q+2}=\iota_+^{p',q'}$. Hence we have 
\begin{equation}\label{eq:iotas}
\iota_+^{p',q'}\iota_+^{p+1,q}\sim^*\iota_+^{p',q'}\iota_+^{p,q+1}    
\end{equation}
 for all $p,q,p',q'$ as above. The lemma follows from \eqref{eq:iotas} using the identity 
 \[
 \iotap^{r,s+1}\iotap^{r,s}=\iotap^{r+1,s}\iotap^{r,s}
 \]
\end{proof}

Consider the $\ind$-$*$-algebra
\[
M_{\pm}^\bullet=\{\iotap^{0,n}:M^n_{\pm}\to M^{n+1}_{\pm}\}.
\]
Let $X$ be an infinite set. Put 
\[
\cM_X=M^\bullet_{\pm}M_X.
\]
Any bijective map $f:X\to Y$ induces an isomorphism
\[
f_*:\cM_X\iso \cM_Y.
\]
\begin{lem}\label{lem:biyes}
Let $f,g:X\to Y$ be bijections. Then $[f_*]=[g_*]\in [\cM_X, \cM_Y]^*$. 
\end{lem}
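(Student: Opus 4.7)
The plan is to reduce the lemma to showing $[\sigma_*] = [\id_{\cM_Y}]$ in $[\cM_Y, \cM_Y]^*$ for every bijection $\sigma\colon Y\to Y$. Indeed, letting $\sigma := g \circ f^{-1}$, functoriality of composition in $[\ind$-$\ahas]$ gives $[g_*] = [\sigma_*]\circ [f_*]$, so it suffices to prove $[\sigma_*]=[\id]$. Since $\sigma_*$ on $M_Y$ is conjugation by the unitary permutation matrix $u := u_\sigma\in \Gamma_Y$, the task becomes exhibiting, after sufficient stabilization inside $\cM_Y = M_{\pm}^\bullet M_Y$, a $*$-homotopy between $\id_{M_Y}$ and $\ad(u)$.

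I would construct the homotopy via the classical swindle. The element $p := \diag(u, u^{-1})\in M_{\pm}\Gamma_Y$ is unitary, as a direct check with the $M_{\pm}$-involution yields $p^* = \diag(u^*, u^{-1*}) = \diag(u^{-1}, u) = p^{-1}$. Using the factorization $\diag(u, u^{-1}) = E_{12}(u)\,E_{21}(-u^{-1})\,E_{12}(u)\,\omega^{-1}$ together with $\omega = E_{12}(-1)\,E_{21}(1)\,E_{12}(-1)\in E_2(\Z)$, set
\[
p(t) := E_{12}(tu)\,E_{21}(-tu^{-1})\,E_{12}(tu)\,E_{12}(-t)\,E_{21}(t)\,E_{12}(-t) \in M_{\pm}\Gamma_Y[t],
\]
a polynomial path of invertibles with $p(0) = 1$, $p(1) = p$, whose inverse $p(t)^{-1}$ is again polynomial. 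The routine containments $\Gamma_Y\cdot M_Y \subset M_Y \supset M_Y\cdot \Gamma_Y$ (a finite-support check) then ensure that $p(t)$, $p(t)^{-1}$, $p(t)^*$ and $(p(t)^{-1})^*$ act on $M_{\pm}M_Y$ by multiplication landing in $M_{\pm}M_Y[t]$.

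With this in place I invoke Lemma \ref{lem:fixhomo} with $C = M_{\pm}\Gamma_Y$, $A = B = M_{\pm}M_Y$, $u_0 = 1$, $u_1 = p$, and homotopy of multipliers $(v,w) := (p(t), p(t)^{-1})$. The endpoint adjoint conditions match since $p^{-1} = p^*$, and the hypotheses $vAv^*\subset B[t]\supset w^*Aw$ reduce to the containments above. The lemma then produces a $*$-homotopy
\[
\iota_+ \sim^* \iota_+\circ \ad(p) \colon M_{\pm}M_Y \longrightarrow M_{\pm}^2 M_Y[t].
\]
Pre-composing with the corner inclusion $\iota_+ \colon M_Y \to M_{\pm}M_Y$ and computing $p\cdot (e_{11}\otimes a)\cdot p^* = e_{11}\otimes u a u^{-1} = \iota_+(\sigma_*(a))$ (only the $(1,1)$-block of $p$ is accessed), this upgrades to a $*$-homotopy between the iterated corner inclusion $\iota_+^{0,0}\iota_+^{0,0}$ and $\iota_+^{0,0}\iota_+^{0,0}\circ\sigma_*$ as maps $M_Y\to M_{\pm}^2 M_Y$.

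This is exactly the equality required at the level-zero component of $[\cM_Y, \cM_Y]^* = \lim_i \colim_j [M_{\pm}^i M_Y, M_{\pm}^j M_Y]^*$, and tensoring the entire construction with $\id_{M_{\pm}^n}$ handles every higher level $i = n$ uniformly. The main obstacle I anticipate is the ind-system bookkeeping: verifying the multiplier containments compatibly across levels, and identifying the iterated corner inclusions with the canonical transition maps $\iota_+^{0,\bullet}$ of $\cM_Y = M_{\pm}^\bullet M_Y$ so that the constructed $*$-homotopy truly witnesses an equality in $[\cM_Y, \cM_Y]^*$, not merely in some larger target.
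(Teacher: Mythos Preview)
Your proof is correct and follows essentially the same route as the paper: reduce to the case $X=Y$, $g=\id$, identify $\sigma_*$ with $\ad(u)$ for the unitary permutation matrix $u\in\Gamma_Y$, and invoke Lemma~\ref{lem:fixhomo} to get a $*$-homotopy after one corner inclusion into $M_\pm$. The paper's proof is a single sentence that cites Lemma~\ref{lem:fixhomo} without spelling out the required homotopy of multipliers $(v,w)$; you supply this explicitly by passing to $M_\pm\Gamma_Y$ and using the Whitehead path $p(t)$ from $1$ to $\diag(u,u^{-1})$, which costs you one additional $M_\pm$-level but is harmless in the ind-system $\cM_Y=M_\pm^\bullet M_Y$. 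Your closing remark about ind-bookkeeping is the right caveat, but the identification of the iterated corner inclusions with the structure maps of $\cM_Y$ is exactly what the surrounding lemmas on $\iota_+^{p,q}$ provide, so there is no real obstacle.
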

\begin{proof}
It suffices to consider the case when $X=Y$ and $g=\id_X$. The matrix $u=\sum_{x\in X}E_{f(x),x}$ is a unitary element of $\Gamma_X$, and $f_*$ is the restriction to $M_X$ of the automorphism $\ad(u):\Gamma_X\to\Gamma_X$. We have $\iota_+\circ\ad(u)=\ad(u\oplus 1)\circ \iota_+$. Using \cite{ct}*{Section 3.4} one obtains a homotopy of multipliers between $\ad(u\oplus 1)$ and $\ad(1\oplus u)$; hence $\iota_+\circ\ad(1\oplus u)\sim^* \iota_+\circ\ad(u\oplus 1)$, by Lemma \ref{lem:fixhomo}. Thus $\iota_+^2\ad(u)\sim^*\iota_+^2$.
\end{proof}

\begin{lem}\label{lem:x=y2}
Let $x,y\in X$. Then $\iota_+\iota_x\sim^*\iota_+\iota_y$.
\end{lem}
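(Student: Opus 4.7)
The plan is to reduce the statement to Lemma~\ref{lem:biyes} by a base-change argument. I first choose a bijection $f\colon X\to X$ with $f(x)=y$; since $X$ has at least two elements, the transposition exchanging $x$ and $y$ while fixing everything else does the job. The induced $*$-isomorphism $f_*\colon\cM_X\to\cM_X$ acts as the identity on the $M_\pm^\bullet$-factor and by index permutation on the $M_X$-factor, so it satisfies
\[
(f_*\otimes\id_A)\circ(\iota_+\iota_x)(a)=e_{1,1}^{M_\pm}\otimes e_{y,y}^{M_X}\otimes a=(\iota_+\iota_y)(a)
\]
for every $a\in A$. In other words, $(f_*\otimes\id_A)\circ(\iota_+\iota_x)=\iota_+\iota_y$ as ind-$*$-algebra homomorphisms $A\to\cM_XA$.

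Next I would invoke Lemma~\ref{lem:biyes}, which supplies the identity $[f_*]=[\id_{\cM_X}]$ in $[\cM_X,\cM_X]^*$. Tracing through its proof, the witnessing $*$-homotopy is in fact a polynomial homotopy $H\colon\cM_X\to\cM_X[t]$, obtained by applying Lemma~\ref{lem:fixhomo} to the unitary $u=\sum_{z\in X}E_{f(z),z}\in\Gamma_X$. Tensoring $H$ over $\ell$ with $A$ gives a polynomial $*$-homotopy $H\otimes\id_A\colon\cM_XA\to(\cM_XA)[t]$ between $\id_{\cM_XA}$ and $f_*\otimes\id_A$, and precomposing with $\iota_+\iota_x$ produces the required $*$-homotopy from $\iota_+\iota_x$ to $\iota_+\iota_y$.

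The main point that deserves a moment's attention is the base-change step: one must check that the polynomial homotopy produced by Lemma~\ref{lem:fixhomo} interacts well with $-\otimes_\ell A$. This is routine, since $H$ is defined by explicit polynomial formulas involving a fixed element of (the unitalization of) $M_\pm\otimes\Gamma_X$, and those formulas continue to define an involution-preserving homomorphism after tensoring with $A$ over $\ell$. With this in hand, the lemma is essentially a formal consequence of Lemma~\ref{lem:biyes}, with no serious obstacle to overcome.
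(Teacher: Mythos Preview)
Your argument is correct. It is a mild repackaging of the paper's proof rather than a genuinely different route: both rest on Lemma~\ref{lem:fixhomo} applied to a unitary of $\Gamma_X$ that conjugates $\iota_x$ to $\iota_y$. The paper works directly with the rotation $u=E_{y,x}-E_{x,y}+\sum_{z\ne x,y}E_{z,z}$, invokes \cite{ct}*{Section 3.4} for an invertible polynomial path $u(t)$ from $1$ to $u$, and then applies Lemma~\ref{lem:fixhomo}. You instead choose the transposition $f=(x\ y)$, observe $(f_*\otimes\id_A)\circ\iota_+\iota_x=\iota_+\iota_y$, and quote Lemma~\ref{lem:biyes} for $[f_*]=[\id]$; tensoring the resulting $*$-homotopy with $A$ and precomposing with $\iota_+\iota_x$ finishes. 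Your version has the virtue of being a formal consequence of a lemma already on the books, at the cost of a small detour through the ind-object $\cM_X$ and the base-change check you flag; the paper's version is more hands-on but avoids that detour. Either way the essential content---``the relevant inner automorphism is $*$-homotopic to the identity after one $M_\pm$-stabilization''---is identical.
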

\begin{proof}
If $x=y$, the maps of the lemma are equal, hence homotopic. Assume $x\ne y$; let $X'=X\setminus\{x,y\}$. Let $u=E_{y,x}-E_{x,y}+\sum_{z\in X'}E_{z,z}$; $u$ is a unitary element of $\Gamma_X$, and satisfies $\ad(u)\circ\iota_x=\iota_y$. Moreover, by \cite{ct}*{Section 3.4}, there is an invertible element $u(t)\in\Gamma_X[t]$ such that $u(0)=1$ and $u(1)=u$. Hence $\iota_+\iota_x\sim^*\iota_+\iota_y$, by Lemma \ref{lem:fixhomo}.
\end{proof}
In view of Lemma \ref{lem:x=y2}, we shall pick any $x\in X$ and write $\iota_X$, or simply $\iota$, for the homotopy class of $\iota_x$, as well as for that of $\iota_X\otimesl \id_A$ $(A\in\ahas)$.
Because we are assuming that $X$ is infinite, there is a bijection $X\coprod X\iso X$; this induces a $*$-homomorphism  
$\cM_X\oplus\cM_X\to \cM_X$; write $\boxplus$ for its $\ind$-$*$-homotopy class. By Lemma \ref{lem:biyes}, $\boxplus$ is independent of the choice of bijection above. As in \cite{ct}*{Section 4.1} one checks, using the lemmas above, that $\cM_X$ equipped with $\boxplus$ and with the class of the zero map, is an abelian monoid in $[\ind-\ahas]$. Similarly, any choice of bijection $X\times X\to X$ gives rise to the same $\ind$-$*$-homotopy class
$\mu$ of $*$-homomorphism $\cM_X\otimesl\cM_X\to \cM_X$. Again, one checks that $\mu$ is associative with unit the $\ind$-$*$-homotopy class of $\iota:\ell\to \cM_X$ and that it distributes over $\boxplus$. Hence $(\cM_X,\boxplus,\mu, 0,[\iota_+])$ is a semi-ring in $[\ind-\ahas]$. %\edit{$\ind$}
Let $A,B\in\ind-\ahas$; put
\begin{equation}\label{hombrace}
\left\{A,B\right\}^*_X=[A,\cM_XB]^*.
\end{equation}
The monoid operation $\boxplus$ on $\cM_X$ induces one on $\left\{A,B\right\}^*_X$. One checks that the product $\mu$ induces a bilinear, associative composition
\begin{gather*}
\star:\left\{B,C\right\}^*_X\times\left\{A,B\right\}^*_X\to \left\{A,C\right\}^*_X\\
[f]\star [g]=[\mu \cM_Xf\circ g]. 
\end{gather*}
Thus we have a category  
$\left\{\ind-\ahas\right\}_X$ with the same objects as $\ind-\ahas$, where homomorphisms are given by \eqref{hombrace} and which is enriched over abelian monoids. 
 Moreover, for $n\ge 1$, $\{A, B^{\S^n}\}^*$ is an abelian group by \cite{ct}*{Theorem 3.3.2} and the Hilton-Eckmann argument. 
 
 There is a canonical functor $\can:[\ahas]\to\{\ahas\}_X$, which is the identity on objects and sends the class of a map $f$ to that of $\iota f$.

 \begin{lem}\label{lem:braceuniv}
The composite functor $\can:\ahas\to [\ahas]\to\{\ahas\}_X$ is homotopy invariant, $M_X$-stable and $\iota_+$-stable. Moreover any functor $H:\ahas\to\fC$ which is homotopy invariant $M_X$-stable and $\iota_+$-stable, factors uniquely through $\can$.
 \end{lem}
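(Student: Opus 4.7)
The plan is to verify the three stability properties for $\can$ first, and then construct the factorization as a direct algebraic manipulation modeled on the construction of the universal functor to $\{\aha\}$ in \cite{ct}*{Section~4.1}.

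For the stability of $\can$, I would argue as follows. For homotopy invariance, the composite $\ev_0\circ\inc=\id_A$ is a $*$-homomorphism and $\inc\circ\ev_0$ is $*$-homotopic to $\id_{A[t]}$ via the homomorphism $A[t]\to A[t][s]$, $t\mapsto ts$; hence $\can(\inc)$ is already invertible in $[\ahas]$ and a fortiori in $\{\ahas\}_X$. For $M_X$-stability, pick $x\in X$; then the $\ind$-$*$-homomorphism
\[
\mu\circ(\iota\otimes\id_{M_X}):\cM_X\otimes M_X\to\cM_X\otimes\cM_X\to\cM_X
\]
gives a map $\cM_XM_XA\to\cM_XA$ which composed with $\iota\circ\iota_x:A\to \cM_XM_XA$ is $*$-homotopic to $\iota$ (by the unit property of $\mu$ and Lemma \ref{lem:x=y2}). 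Read in $\{\ahas\}_X$, this exhibits $\can(\iota_x)$ as an isomorphism whose inverse is the class of this multiplication map. The argument for $\iota_+$-stability is identical, using the structure map $M_\pm^\bullet\otimes M_\pm\to M_\pm^\bullet$ inducing the appropriate inverse.

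For the universal property, let $H:\ahas\to\fC$ be homotopy invariant, $M_X$-stable and $\iota_+$-stable. I first observe that, by iterating $\iota_+$-stability and using Lemma \ref{lem:x=y} together with the $M_X$-stability hypothesis, for every $n\ge0$ and every $B\in\ahas$ the canonical map
\[
\kappa^n_B:=H(\iota_+^{\otimes n}\otimes\iota_x\otimes\id_B):H(B)\lra H(M_\pm^n M_XB)
\]
is an isomorphism, and these are compatible with the structure maps of the $\ind$-object $\cM_XB$. Given a morphism $[f]\in\{A,B\}^*_X=\colim_n[A,M_\pm^n M_XB]^*$ represented by $f:A\to M_\pm^n M_XB$, I define
\[
\bar H([f])=(\kappa^n_B)^{-1}\circ H(f):H(A)\lra H(B).
\]
Homotopy invariance of $H$ makes $\bar H([f])$ independent of the homotopy class representative (by factoring a $*$-homotopy through $\inc$), and compatibility of $\kappa^n_B$ with the structure maps of $\cM_XB$ makes it independent of the level $n$. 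Thus $\bar H$ is well defined on objects and morphisms.

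The main obstacle, and the only nontrivial check, is that $\bar H$ respects the composition $\star$ in $\{\ahas\}_X$, since $\star$ is defined via the multiplication $\mu:\cM_X\otimes\cM_X\to\cM_X$ rather than by naive composition of $*$-homomorphisms. Concretely, for $f:A\to M_\pm^n M_XB$ and $g:B\to M_\pm^m M_XC$, the composite $[g]\star[f]$ is represented by $\mu\circ(M_\pm^n M_X g)\circ f:A\to M_\pm^{n+m}M_XC$ (after choosing a bijection $X\times X\iso X$ and reordering tensor factors). Applying $H$, using naturality and the fact that, again by iterated $\iota_+$- and $M_X$-stability together with Lemma \ref{lem:x=y}, the induced map $H(M_\pm^{n}M_XB)\to H(M_\pm^{n+m}M_XM_\pm^m M_XB)\to H(M_\pm^{n+m}M_XB)$ coming from $M_\pm^{n}M_X(\iota_+^{\otimes m}\otimes\iota_x)$ and $\mu$ is inverse to $\kappa^m_{M_\pm^nM_XB}$, one obtains $\bar H([g]\star[f])=\bar H([g])\circ\bar H([f])$. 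Finally, $\bar H\circ\can=H$ because for a $*$-homomorphism $f:A\to B$ one has $\can(f)=[\iota\circ f]$ and $\kappa^0_B=\id$ after identifying $M_X^0=\ell$, so $\bar H([\iota\circ f])=H(f)$. Uniqueness of $\bar H$ is automatic: any factorization must agree with $H$ on the image of $\can$, and hence is forced on all of $\{\ahas\}_X$ by the formula $\bar H([f])=(\kappa^n_B)^{-1}\circ H(f)$, since $\can(\iota_+^{\otimes n}\otimes\iota_x\otimes\id_B)$ must be sent to an isomorphism.
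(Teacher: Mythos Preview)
Your argument is correct and is exactly the kind of routine verification the paper has in mind; the paper's own proof reads in its entirety ``Straightforward.'' One small slip: in your last paragraph you write ``$\kappa^0_B=\id$ after identifying $M_X^0=\ell$'', but in your own notation the $M_X$ factor is always present, so $\kappa^0_B=H(\iota_x):H(B)\to H(M_XB)$ is not the identity---the correct computation is $\bar H([\iota\circ f])=(\kappa^0_B)^{-1}H(\iota_x)H(f)=H(f)$, which gives the same conclusion.
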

 \begin{proof}
Straightforward. 
 \end{proof}
\begin{lem}\label{lem:hermstab}
Let $\epsilon\in\ell$ be a unitary element. Let $R\in\ahas$ be a unital $*$-algebra,  let $\Phi,\Psi\in R$ be $\epsilon$-hermitian elements, and let $A\triqui R$ be a $*$-ideal. Then for every infinte set $X$, the canonical functor $\can:\ahas\to\{\ahas\}_X$ maps the canonical inclusion $A^{\Phi}\to M_2A^{\Phi\oplus\Psi}$ to an isomorphism. 
\end{lem}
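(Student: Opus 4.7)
The strategy is to reduce hermitian stability of $\can$ to the two stability properties that Lemma \ref{lem:braceuniv} already supplies: $M_X$-stability and $\iota_+$-stability. The key is to combine these with Proposition \ref{prop:hermistab} via a natural $2$-out-of-$3$ square.

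First I would verify that $\can$ is $M_2$-stable on every object. By Lemma \ref{lem:braceuniv}, $\can$ is $M_X$-stable. Since $X$ is infinite we have $|X|>|\{1,2\}|$, and so Lemma \ref{lem:xley} yields that $\can$ is $M_2$-stable. In particular, for every $*$-ideal $A\triqui R$ and every central unitary $\epsilon\in R$, $\can$ is $M_2$-stable on ${}_\epsilon M_2 A$. Proposition \ref{prop:hermistab} then applies and gives that $\can\circ M_\pm$ is hermitian stable on $A$; equivalently, $\can$ sends the map
\[
M_\pm \iota^\phi\colon M_\pm A^\phi \longrightarrow M_\pm (M_2 A)^{\phi\oplus\psi}
\]
to an isomorphism, for any pair of $\epsilon$-hermitian invertible elements $\phi,\psi\in R$.

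Next I would consider the naturality square of the corner inclusion $\iota_+$ applied to $\iota^\phi$:
\[
\xymatrix{
A^\phi \ar[r]^-{\iota^\phi} \ar[d]_{\iota_+} & (M_2 A)^{\phi\oplus\psi} \ar[d]^{\iota_+} \\
M_\pm A^\phi \ar[r]_-{M_\pm \iota^\phi} & M_\pm (M_2 A)^{\phi\oplus\psi}
}
\]
A direct check (both composites send $a$ to $e_{11}\otimes e_{11}\otimes a$) shows the square commutes; moreover, because $e_{11}$ is $h_\pm$-self-adjoint and commutes with $h_\pm$, both vertical arrows are genuine $*$-homomorphisms with respect to the twisted involutions. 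By $\iota_+$-stability of $\can$ (Lemma \ref{lem:braceuniv}), the two vertical arrows become isomorphisms in $\{\ahas\}_X$. By the previous paragraph, so does the bottom horizontal arrow. A $2$-out-of-$3$ argument then shows that $\can(\iota^\phi)$ is an isomorphism, which is the assertion of the lemma.

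The only real obstacle is verifying the hypothesis of Proposition \ref{prop:hermistab}, i.e.\ $M_2$-stability of $\can$ on the specific objects ${}_\epsilon M_2 A$; this is handled uniformly by Lemma \ref{lem:xley} once $X$ is infinite. Everything else is diagram chasing in the square above.
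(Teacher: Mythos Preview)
Your proposal is correct and follows essentially the same route as the paper's proof, which is the one-liner ``The proof follows from Lemma \ref{lem:braceuniv} and Proposition \ref{prop:hermistab}.'' You have simply unpacked what that sentence means: pass from $M_X$-stability to $M_2$-stability via Lemma \ref{lem:xley}, feed this into Proposition \ref{prop:hermistab} to get hermitian stability of $\can\circ M_\pm$, and then cancel the $M_\pm$ using $\iota_+$-stability and the naturality square.
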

\begin{proof} The proof follows from Lemmas \ref{lem:braceuniv} and \ref{lem:xley} and Proposition \ref{prop:hermistab}.
\end{proof}
\begin{ex}\label{ex:cornerid}
Lemma \ref{lem:hermstab} applied to $R=\tilde{A}$ and $\Phi=1$ says that the upper left hand corner inclusion $i:A\to M_2A$ is an isomorphism in $\{\ahas\}_X$.%\edit{sin $\ind$ con $X$}  
\end{ex}

\begin{lem}\label{lem:hermstab2}
Let $A\triqui R$ be as in Lemma \ref{lem:hermstab} and let $\lambda_1,\lambda_2\in R$ be central elements satisfying \eqref{lambda}. Let 
\[
p_i=p_{\lambda_i}=\left[\begin{matrix}\lambda_i^*& 1\\  \lambda_i\lambda_i^*& \lambda_i\end{matrix}\right]
\]
and let $\iota_i:A\to {}_1M_2A$, $\iota_i(a)=p_ia$. Then $\can(\iota_1)=\can(\iota_2)$ is an isomorphism in $\{\ahas\}_X$.
\end{lem}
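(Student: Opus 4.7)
The plan is to prove the two assertions independently: first that $\can(\iota_{\lambda_1}) = \can(\iota_{\lambda_2})$, and second that $\can(\iota_\lambda)$ is an isomorphism in $\{\ahas\}_X$ for each admissible $\lambda$.

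For the equality, I will use the straight-line interpolation $\lambda(t) := (1-t)\lambda_1 + t\lambda_2 \in R[t]$. Because $\ell[t]$ carries the trivial involution, $\lambda(t)$ is central in $R[t]$ and satisfies $\lambda(t) + \lambda(t)^* = (1-t)\cdot 1 + t\cdot 1 = 1$. The identities that make $p_\lambda$ a self-adjoint idempotent in ${}_1M_2 R$ depend only on centrality and the relation $\lambda + \lambda^* = 1$, so they hold verbatim for $p_{\lambda(t)}$ in ${}_1M_2 R[t]$. Hence
\[
H : A \to {}_1M_2 A[t], \qquad H(a) = p_{\lambda(t)}\, a
\]
is a $*$-homomorphism with $\ev_0 \circ H = \iota_{\lambda_1}$ and $\ev_1 \circ H = \iota_{\lambda_2}$, and the equality $\can(\iota_{\lambda_1}) = \can(\iota_{\lambda_2})$ follows from the homotopy invariance of $\can$ (Lemma \ref{lem:braceuniv}).

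For the isomorphism claim, I will factor $\iota_\lambda$ through the hermitian stability lemma. Take $u_\lambda$ to be the matrix \eqref{elu} with $\phi = 1$, namely $u_\lambda = \begin{bmatrix} 1 & 1 \\ \lambda & -\lambda^* \end{bmatrix} \in M_2 R$. Since $\det u_\lambda = -(\lambda + \lambda^*) = -1$, the matrix $u_\lambda$ is invertible in $M_2 R$, and a routine $2\times 2$ computation gives $u_\lambda\, e_{11}\, u_\lambda^{-1} = p_\lambda$. The entries of $u_\lambda$ and $u_\lambda^{-1}$ lie in the center of $R$, so they commute with every $a \in A$; therefore
\[
\iota_\lambda(a) \;=\; p_\lambda a \;=\; u_\lambda\,(e_{11} a)\, u_\lambda^{-1} \;=\; \ad(u_\lambda)(e_{11} a).
\]
This exhibits $\iota_\lambda$ as the composite of the upper-left corner inclusion $A \to M_\pm A = M_2 A^{1 \oplus (-1)}$ with the $*$-isomorphism $\ad(u_\lambda) : M_\pm A \iso {}_1M_2 A$ provided by \eqref{map:aduiso}. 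Applying Lemma \ref{lem:hermstab} to $R$ with the $1$-hermitian invertible elements $\phi = 1$ and $\psi = -1$ shows that $\can$ sends the corner inclusion to an isomorphism; composing with the isomorphism $\can(\ad(u_\lambda))$ (the image of a $*$-isomorphism) yields that $\can(\iota_\lambda)$ is an isomorphism as well.

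No serious obstacle is expected: the essential inputs are the matrix identities $p_{\lambda(t)}^2 = p_{\lambda(t)} = p_{\lambda(t)}^*$ and $u_\lambda\, e_{11}\, u_\lambda^{-1} = p_\lambda$, both of which follow from the $\lambda$-assumption \ref{stan:lambda} by short direct calculations.
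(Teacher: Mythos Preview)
Your proof is correct. The isomorphism half is exactly the paper's argument: both factor $\iota_\lambda$ as $\ad(u_\lambda)\circ\iota_+$ via the identity $u_\lambda e_{11}u_\lambda^{-1}=p_\lambda$, and then use that $\can(\iota_+)$ is an isomorphism (you cite Lemma~\ref{lem:hermstab}, the paper cites $\iota_+$-stability directly; these are equivalent here).

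For the equality $\can(\iota_1)=\can(\iota_2)$ you take a genuinely different route. The paper observes that $u_2u_1^{-1}$ is unitary in ${}_1M_2R$ (since $u_i^*h_1u_i=h_\pm$ for both $i$), so that $\iota_2=\ad(u_2u_1^{-1})\circ\iota_1$ and Lemma~\ref{lem:conju1} gives $\can(\ad(u_2u_1^{-1}))=\id$. Your argument instead produces an explicit polynomial $*$-homotopy $H(a)=p_{\lambda(t)}a$ with $\lambda(t)=(1-t)\lambda_1+t\lambda_2$, using only that $\lambda(t)$ is central in $R[t]$ and satisfies $\lambda(t)+\lambda(t)^*=1$ (since $t^*=t$), so that $p_{\lambda(t)}$ is a self-adjoint idempotent and $H$ is a $*$-homomorphism. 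This is more elementary---it appeals only to homotopy invariance of $\can$ and avoids Lemma~\ref{lem:conju1}---while the paper's argument is more structural, pinning the discrepancy down to an inner automorphism by a unitary. Both are short and valid.
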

\begin{proof}
Let $u_i=u_{\lambda_i}$ be as in \eqref{elu}. Under the isomorphism \eqref{map:aduiso}, $\iota_i$ corresponds to $\iota_+$. Thus $\can(\iota_i)$ is an isomorphism. Morover, since $u={u_2u_1^{-1}\in {}_1M_2R}$ is unitary, $\can(\ad(u))=\id_{{}_1M_2A}$ by Lemma \ref{lem:conju1}, so  \[\can(\iota_2)=\can(\ad(u_2u_1^{-1}))\can(\iota_1)=\can(\iota_1).\qedhere\]
\end{proof}

\section{Bivariant hermitian \topdf{$K$}{K}-theory}\label{sec:biva}
\numberwithin{equation}{subsection}
\subsection{Extensions and classifying maps}\label{subsec:extclassi}
As in Subsection \ref{subsec:under}, let $\fU$ be our fixed underlying category and $F:\ahas\to\fU$ and $T':\fU\to\ahas$ the forgetful functor and its left adjoint. Put $T=T'F:\ahas\to\ahas$. If $A\in\ahas$, write $JA=\ker(TA\to A)$ for the kernel of the counit of the adjunction. A sequence of $\ind$-$*$ algebras \eqref{ext} is an extension if $p$ is a cokernel of $i$ and $i$ is a kernel of $p$; it is semi-split if $F(p)$ is split. Any semi-split extension \eqref{ext} gives rise to a $*$-homomorphism $JC\to A$, which is unique up to homotopy \cite{ct}*{Proposition 4.4.1}; its homotopy class, and, by abuse of notation, any map in it, is called the \emph{classifying map} of the extension. We write $\gamma_A$ and $\rho_A$ for the classifying maps of the extensions
\begin{gather}
\xymatrix{(JA)^{\S^1}\ar@{ >-}[r] & (TA)^{\S^1}\ar@{->>}[r] & A^{\S^1}}\label{gamma}\\
\xymatrix{A^{\S^1}\ar@{ >-}[r] & \cP A\ar@{->>}[r] & A}\label{rho}    
\end{gather}
For $m,n\ge 0$, put also 
\begin{gather}
\gamma_A^{1,n}=(\gamma_{A})^{\S^{n-1}}\circ\cdots\circ\gamma_{A^{\S^{n-2}}}^{\S^1}\circ\gamma_{A^{\S^{n-1}}}:J(A^{\S^n})\to (JA)^{\S^n},\\
\gamma_A^{m,n}=\gamma^{1,n}_{J^{m-1}A}\circ\cdots\circ J^{m-2}(\gamma_{JA}^{1,n})\circ J^{m-1}(\gamma_A^{1,n}):J^m(A^{\S^n})\to J^m(A)^{\S^n}.
\end{gather}

In the following Lemma and elsewhere we shall abuse notation and use the same letter for the homotopy class of a map $f:A\to B\in\ind\ahas$ and  for its image in $\{A,B\}$, and in case the latter is an abelian group (e.g. if $B=C^{\S^n}$) we put $-f$ for the inverse of $\can(f)$ in that group. 

\begin{lem}\label{lem:correrho}
Let $A\in\ind-\ahas$ and let $X$ be an infinite set. Then the following diagram commutes in $\{\ind-\ahas\}_X$
\[
\xymatrix{J^2(A)\ar[r]^{-\rho_{JA}}\ar[d]_{J(\rho_A)}& JA^{\S^1}\\
          J(A^{\S^1})\ar[ur]_{\gamma_A}&}
\]
\end{lem}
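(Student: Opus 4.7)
The plan is to identify both maps $\gamma_A\circ J(\rho_A)$ and $\rho_{JA}$ as classifying maps of semi-split extensions of $JA$ by $(JA)^{\S^1}$, and then exhibit a morphism between those extensions whose restriction to the common kernel is loop-inversion. Since loop-inversion acts as $-\id$ on the abelian group $\{J^2A,(JA)^{\S^1}\}^*_X$ (which is abelian by the Eckmann--Hilton argument, as noted after \eqref{hombrace}), this will give the required sign.

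First, by the universal property of $J$ and the naturality of the classifying-map construction under pullback of semi-split extensions along $*$-homomorphisms on the base, the composite $\gamma_A\circ J(\rho_A)$ is the classifying map of the pullback of
\[
\xymatrix{(JA)^{\S^1}\ar@{ >-}[r]&(TA)^{\S^1}\ar@{->>}[r]&A^{\S^1}}
\]
along $\rho_A:JA\to A^{\S^1}$, while $\rho_{JA}$ is by definition the classifying map of the path extension \eqref{rho} for $JA$, namely
\[
\xymatrix{(JA)^{\S^1}\ar@{ >-}[r]&\cP(JA)\ar@{->>}[r]&JA.}
\]

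Next I would construct a morphism from the path extension to the pullback extension that is the identity on the quotient $JA$ and, on the kernel $(JA)^{\S^1}$, equals the loop-inversion map induced by the path-reversing automorphism of $\sdi\Delta^1$ (which swaps the two endpoints). The middle map $\cP(JA)=(JA)^{\sdi\Delta^1}\to (TA)^{\S^1}\times_{A^{\S^1}}JA$ is built from the natural inclusion $JA\hookrightarrow TA$ (so that $\cP(JA)\hookrightarrow(TA)^{\sdi\Delta^1}$), followed by precomposition with the endpoint swap, and compatibility with the pullback along $\rho_A$ is automatic because $\rho_A$ is itself the classifying map of the path extension for $A$. By the uniqueness of classifying maps up to $*$-homotopy (\cite{ct}*{Proposition 4.4.1}), this morphism of extensions forces $\gamma_A\circ J(\rho_A)=(-\id_{(JA)^{\S^1}})\circ\rho_{JA}=-\rho_{JA}$ in $[\ind-\ahas]$, and hence in $\{\ind-\ahas\}_X$ via the canonical functor of Lemma \ref{lem:braceuniv}.

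The main obstacle will be verifying that the middle map in the comparison of extensions can be chosen in a way compatible with the involutions: the natural section of the pullback extension coming from the splitting of $(TA)^{\S^1}\twoheadrightarrow A^{\S^1}$ is only an $\fU$-map, so the induced lift of $\cP(JA)$ may fail to be a $*$-homomorphism. This is precisely the situation addressed by Lemma \ref{lem:fixhomo}: after passing to the $\iota_+$- and $M_X$-stabilization implicit in $\{\ind-\ahas\}_X$, any such homotopy of multipliers between $*$-maps can be replaced by a $*$-homotopy. Thus the identification $\gamma_A\circ J(\rho_A)=-\rho_{JA}$, which may only hold up to a non-$*$-preserving homotopy in $[\ind-\ahas]$, descends to a genuine equality in $\{\ind-\ahas\}_X$, yielding the commutativity of the diagram.
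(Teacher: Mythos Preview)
Your overall strategy---recognize both $\rho_{JA}$ and $\gamma_A\circ J(\rho_A)$ as classifying maps of semi-split extensions of $JA$ by $(JA)^{\S^1}$, then exhibit a morphism of extensions that is the identity on the base and loop-reversal on the kernel---is exactly the argument of \cite{cmr}*{Lemma 6.30}, which is what the paper's proof defers to. So the approach is correct and the same as the paper's.

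Your final paragraph, however, manufactures an obstacle that is not there and then reaches for the wrong tool. The comparison morphism between the two extensions is assembled from $*$-homomorphisms only: the inclusion $JA\hookrightarrow TA$, the functors $\cP$ and $(-)^{\S^1}$ applied to $*$-maps, and the path-reversal, which is induced by a simplicial automorphism of $\sdi\Delta^1$ and hence preserves the involution on the nose. No section is used to build this map; sections enter only in the \emph{definition} of classifying maps, and the resulting classifying maps are $*$-homomorphisms regardless of whether the chosen section was. The homotopy uniqueness statement you cite from \cite{ct}*{Proposition 4.4.1} already gives $\gamma_A\circ J(\rho_A)=(\text{loop-reversal})\circ\rho_{JA}$ as an equality of $*$-homotopy classes, and loop-reversal acts as $-1$ on $\{J^2A,(JA)^{\S^1}\}^*$ via the loop-concatenation group structure. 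Lemma~\ref{lem:fixhomo} concerns conjugation by multipliers and plays no role here; invoking it suggests a misdiagnosis of where the potential difficulty lies.
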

\begin{proof}
The analogue of the lemma in the setting of bornological algebras is proved in \cite{cmr}*{Lemma 6.30}. With some obvious modifications, the same argument works in the present case. 
\end{proof}

\begin{rem}\label{rem:correrho}
The analogue of Lemma \ref{lem:correrho} for algebras without involution also holds as stated, and can be deduced from the lemma using the equivalence of Example \ref{ex:inv}.%\edit{from lemma} This corrects a mistake in 
\cite{ct}*{Lemma 6.2.2}, where the sign is missing. A sign is also missing in the definition of composition in the category $kk$ \cite{ct}*{Theorem 6.2.3}; this is fixed as in \eqref{compo} below.  
\end{rem}
\comment{
\begin{lem}\label{lem:jfun}
The assignment $A\mapsto J(A)$, $f:A \to B $
\end{lem}
}

\subsection{The category \topdf{$kk^h$}{kkh}}\label{subsec:catkkh}

Fix an infinite set $X$ and let $A,B\in\ahas$. As in \cite{ct}*{Section 6.1}, there is a map $\{A,B\}^*_X\to \{JA,B^{\S^1}\}^*_X$ which sends the class of $f$ to that of $\rho_B J(f)$.%\edit{$\{\}$, no $[]$} Thus one can form the colimit
\begin{equation}\label{eq:kk*}
kk^h(A,B)=kk^h(A,B)_X=\colim_n\{J^nA,B^{\S^n}\}^*_X
\end{equation}
Since $X$ is fixed, we drop it from our notation. Define a composition law
\begin{gather}\label{map:compo}
\circ: kk^h(B,C)\otimes kk^h(A,B)\to kk^h(A,C), \\
(\xi,\eta)\mapsto \xi\circ\eta\nonumber
\end{gather}
as follows.
If $\xi$ is represented by a class $[f]\in \{J^mB,C^{\S^m}\}^*$ and $\eta$ is represented by $[g]\in \{J^nA,B^{\S^n}\}^*$, put
\begin{equation}\label{compo}
\xi\circ\eta=[f^{\S^n}\circ (-1)^{mn}\gamma_B^{m,n}]\star [J^m(g)].    
\end{equation}
As in \cite{cmr}*{Section 6.3}, one checks that the composition above is well-defined and associative. Hence we have a category $kk^h$ with the same objects as $\ahas$, where the identity map of $A\in\ahas$ is represented by the class of $\iota_A:A\to \cM_XA$. Define a functor $\{\ahas\}\to kk^h$ as the identity on objects and as the canonical map to the colimit $\{A,B\}_X\to kk^h(A,B)$ on arrows. Composing the latter with the functor $\ahas\to\{\ahas\}_X$ we obtain
\begin{equation}\label{map:j}
j^h:\ahas\to kk^h.
\end{equation}
Let $\fT$ be a triangulated category; write $[-n]$ for the $n$-fold suspension in $\fT$. Let $\cE$ be the class of all semi-split extensions 
\[
(E)\qquad \xymatrix{A\ar@{ >-}[r]^i& B\ar@{->>}[r]^p& C.}
\]
\begin{defi}\label{def:ex-ho-theory}
An \emph{excisive homology theory} on $\ahas$ with values in a triangulated category $\fT$ is a functor $H:\ahas\to\fT$ together with a family of maps
$\{\partial_E:H(C)[1]\to H(A)| E\in\cE\}$ such that for every $E\in\cE$, 
\[
\xymatrix{H(C)[1]\ar[r]^{\partial_E}&H(A)\ar[r]& H(B)\ar[r]& H(C)}
\]
is a triangle in $\fT$, and such that $\partial$ is compatible with maps of extensions (\cite{ct}*{Section 6.6}). 
\end{defi}
Let $f:A\to B\in\ahas$ be a $*$-homomorphism. The \emph{homotopy fiber} of $f$ is the pullback $P_f$ of $f$ and $\ev_1:PB\to B$. Thus the following diagram is cartesian
\begin{equation}\label{diag:fiber}
    \xymatrix{P_f\ar[d]\ar[r]& PB\ar[d]^{\ev_1}\\ A\ar[r]^f& B}
\end{equation}
Observe that $\Omega B$ embeds naturally as a $*$-ideal in $P_f$. The sequence
\begin{equation}\label{seq:hofi}
\Omega B\to P_f\to A\overset{f}\lra B
\end{equation}
is the \emph{homotopy fibration} associated to $f$. 
\begin{prop}\label{prop:kkuniv}
The category $kk^h$ has a triangulation which makes the functor $j^h:\ahas\to kk^h$ into an excisive homology theory which is homotopy invariant, $\iota_+$-stable and $M_X$-stable. Moreover any other excisive, homotopy invariant, $\iota_+$-stable and $M_X$-stable homology theory $H:\ahas\to\fT$ factors uniquely through a triangulated functor $\bar{H}:kk^h\to \fT$.
\end{prop}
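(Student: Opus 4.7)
My plan is to adapt the construction of the triangulation on $kk$ from \cite{ct}*{Section 6.6} (with the sign correction noted in Remark \ref{rem:correrho}) to the involutive setting, leveraging the technical infrastructure already developed in Sections \ref{sec:stabhom}--\ref{subsec:extclassi}.

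First I would dispatch the stability assertions: since $j^h$ factors as $\ahas\to\{\ahas\}_X\to kk^h$, Lemma \ref{lem:braceuniv} delivers homotopy invariance, $M_X$-stability and $\iota_+$-stability for free. The colimit formula \eqref{eq:kk*} is then designed precisely to invert the classifying maps $\rho_B\colon JB\to B^{\S^1}$ and $\gamma_A\colon J(A^{\S^1})\to (JA)^{\S^1}$; combined with Lemma \ref{lem:correrho}, these become mutually inverse up to sign in $kk^h$, so the endofunctor $B\mapsto B^{\S^1}$ is an auto-equivalence, which I take as the suspension $[-1]$.

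To each semi-split extension $E\colon A\to B\to C$ I associate the class $\xi_E\in kk^h_1(C,A)$ of its classifying map $JC\to A$, and declare a triangle distinguished if it is isomorphic in $kk^h$ to $j^h(A)\to j^h(B)\to j^h(C)\xrightarrow{\xi_E} j^h(A)[-1]$. The axioms (TR1)--(TR4) are verified as in the proof of the analogous statement for $kk$: rotation uses Lemma \ref{lem:correrho} to identify the classifying map of a shifted extension with the suspension of $\xi_E$, while the octahedral axiom follows from a double mapping-path construction on a pair of composable classifying maps. For universality, any excisive, homotopy invariant, $M_X$- and $\iota_+$-stable $H\colon\ahas\to\fT$ first factors uniquely through $\{\ahas\}_X$ by Lemma \ref{lem:braceuniv}; applying excision to the path extension \eqref{rho}, whose middle term $\cP A$ is contractible and hence sent to zero by $H$, yields a canonical isomorphism $H(A^{\S^1})\cong H(A)[-1]$ in $\fT$. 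This compatibility allows one to extend $H$ along the colimit \eqref{eq:kk*}, producing the desired triangulated factorization $\bar H\colon kk^h\to\fT$.

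The main obstacle I expect is technical: several of the homotopies used in the $kk$-arguments---in particular when verifying associativity of the composition \eqref{compo} or when rotating triangles---are realized by inner multipliers of the form $\ad(u,v)$ with $v\ne u^*$, hence are not automatically involution-preserving. This is precisely the situation addressed by Lemma \ref{lem:fixhomo}, which upgrades such homotopies to $*$-homotopies after passage through $M_{\pm}$. Since $\cM_X=M_{\pm}^{\bullet} M_X$ already absorbs this stabilization, the Corti\~nas--Thaler arguments transfer with only cosmetic modifications.
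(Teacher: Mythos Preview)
Your overall strategy is sound and closely parallels \cite{ct}, but there is a systematic orientation error. In the paper's conventions $[-1]$ denotes suspension and $[1]$ its inverse; the functor $B\mapsto B^{\S^1}$ is a \emph{loop} model, representing $[1]$, not $[-1]$. Indeed, applying an excisive $H$ to the path extension \eqref{rho} with contractible middle term $\cP A$ yields $H(A^{\S^1})\cong H(A)[1]$, not $H(A)[-1]$ as you write. Likewise the classifying map $JC\to A$ of a semi-split extension represents the boundary $\partial_E\in\hom_{kk^h}(j^h(C)[1],j^h(A))=kk^h_{-1}(C,A)$, not $kk^h_1(C,A)$, matching the shape $H(C)[1]\to H(A)\to H(B)\to H(C)$ in the definition of excisive homology theory given just above the proposition. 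All of this is repaired by swapping $[1]$ and $[-1]$ throughout.

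Once corrected, your approach differs from the paper's only in the choice of models. The paper sets $j^h(A)[-1]:=j^h(\Sigma_X A)$ and uses the cone extension \eqref{ext:cone} to show that $\Omega\Sigma_X\cong\id$ in $kk^h$ (as in \cite{ct}*{Corollary 6.4.2}); it then declares a triangle distinguished when it is isomorphic to the image of a homotopy fibration \eqref{seq:hofi}, following \cite{ct}*{Definition 6.5.1 and Theorem 6.5.2}, and obtains the universal property from \cite{ct}*{Theorem 6.6.2}. Your route via $(-)^{\S^1}$ and classifying maps of semi-split extensions is an equivalent presentation, closer in spirit to \cite{cmr}*{Section 6.3}; the homotopy-fibration model has the practical advantage that every $*$-homomorphism $f$ sits in a canonical triangle via $P_f$, which is exactly the form used later in the paper (for instance in the definitions of $U$ and $V$ and in Example \ref{ex:cones}). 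Either way the substantive verification of (TR1)--(TR4) and of universality is deferred to \cite{ct}.

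(Also: the second author of \cite{ct} is Thom, not Thaler.)
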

\begin{proof} As in \cite{ct}*{Corollary 6.4.2} one proves that the endofunctor of $kk^h$ induced by $\Omega\Sigma_X$ is naturally isomorphic to the identity. For $A\in\ahas$, put $j^h(A)[-n]=j^h(\Sigma^n_XA)$. Thus $j^h(A)\mapsto j^h(A)[-1]$ is an equivalence with inverse $j^h(A)\mapsto j^h(A)[1]:=j^h(\Omega A)$. As in \cite{ct}*{Definition 6.5.1} we call a triangle in $kk^h$ 
\[
j^h(C')[1]\to j^h(A')\to j^h(B')\to j^h(C')
\]
\emph{distinguished} if it is isomorphic to the image under $j^h$ of a homotopy fibration sequence \eqref{seq:hofi}. The proof that the triangles above make $kk^h$ triangulated is the same as in \cite{ct}*{Theorem 6.5.2}. The universal property is proved as in \cite{ct}*{Theorem 6.6.2}.  
\end{proof}

\bigskip

\begin{rem}\label{rem:undinvo}
As explained in Subsection \ref{subsec:under}, the classes of extensions which are semi-split with respect to the underlying categories $\sets$ and $\ell-\Mod$ agree with those semi-split with respect to sets with involution and $\ell$-modules with involution. Hence by Proposition \ref{prop:kkuniv}, the corresponding $kk^h$-theories are the same whether involutions are included in the underlying category or not.%\edit{$kk^h$}
\end{rem}

\begin{rem}\label{rem:flat}
Let $(\ahas)_f\subset\ahas$ and $kk_f^h\subset kk^h$ be the full subcategories whose objects are the $*$-algebras that are flat as $\ell$-modules and let $j_f^h:(\ahas)_f\to kk_f^h$ be the restriction of $j^h$. Observe that $(\ahas)_f$ is closed under $J$ and under homotopy fibers; hence $kk_f^h$ is triangulated and $j_f^h$ is excisive, homotopy invariant, $\iota_+$-stable and $M_X$-stable. Moreover the argument of \cite{ct}*{Theorem 6.6.2} applies to show that $j^h_f$ is universal among such functors.
\end{rem}

\begin{ex}\label{ex:invequi}
Let $\ell_0$ be any commutative ring and let $\ell=\inv(\ell_0)$ and $\inv:\alg_{\ell_0}\to \ahas$ be as in Example \ref{ex:inv}. Then $\inv$ is excisive, homotopy invariant and matricially stable, so it induces a triangulated functor from $kk$ of $\ell_0$-algebras, $kk(\ell_0)$, to %\edit{coma}
$kk^h(\ell)$. Similarly, its inverse, $B\mapsto (1,0)B=(\ell^2/(0,1)\ell)\otimes_\ell B$ is excisive, homotopy invariant, and matricially stable; by \ref{ex:phinv} it is also hermitian stable. Hence it induces a functor $kk^h(\ell)\to kk^h(\ell_0)$ which is inverse to $\inv$. This shows that $kk$ is a particular case of $kk^h$.
\end{ex}

\begin{ex}\label{ex:tenso}
Let $L\in\ahas$; then $L\otimesl-$ preserves all $F$-linearly split extensions if either $L$ is flat as $\ell$-module or every $F$-split extension is $\ell$-linearly split. In either case, $j^h(L\otimesl-):\ahas\to kk^h$ is homotopy invariant, matricially stable, hermitian stable and excisive, and therefore induces a triangulated functor $L\otimesl-:kk^h\to kk^h$. By a similar argument, for $kk^h_f$ as in Remark \ref{rem:flat}, any $L\in\ahas$ induces a triangulated functor $L\otimesl-:kk^h_f\to kk^h$.
\end{ex}

\begin{lem}\label{lem:tenso}
Let $A_1,A_2\in\ahas$ such that $A_i\otimesl-$ preserves $F$-linearly split extensions. Then we have a natural bilinear, associative product
\[
kk^h(A_1,A_2)\times kk^h(B_1,B_2)\to kk^h(A_1\otimesl B_1,A_2\otimesl B_2),\,\, (\xi,\eta)\mapsto \xi\otimes \eta
\]
that is compatible with composition in all variables. 
\end{lem}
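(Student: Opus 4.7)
The plan is to adapt the construction of the exterior product in bivariant algebraic $K$-theory from \cite{ct}*{Section 6.3} to the involutive setting. The key input is already available in Example \ref{ex:tenso}: the hypothesis that $A_i \otimesl -$ preserves $F$-split extensions means that $A_1 \otimesl -$ and $A_2 \otimesl -$ descend to triangulated endofunctors of $kk^h$. Consequently, for any $\eta \in kk^h(B_1, B_2)$ we obtain morphisms $A_i \otimesl \eta \in kk^h(A_i \otimesl B_1, A_i \otimesl B_2)$, $i = 1, 2$. It therefore suffices to construct, for each $B \in \ahas$, a morphism $\xi \otimesl B \in kk^h(A_1 \otimesl B, A_2 \otimesl B)$, natural in $B$, and set
\[
\xi \otimes \eta := (A_2 \otimesl \eta) \circ (\xi \otimesl B_1).
\]

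To produce $\xi \otimesl B$, I represent $\xi$ by a class $[f] \in \{J^n A_1, A_2^{\S^n}\}^*_X$, take the tensor $f \otimesl B \colon J^n A_1 \otimesl B \to A_2^{\S^n} \otimesl B$, compose with the canonical identification $A_2^{\S^n} \otimesl B \cong (A_2 \otimesl B)^{\S^n}$, and precompose with an iterated classifying map $\kappa_B \colon J^n(A_1 \otimesl B) \to J^n A_1 \otimesl B$ constructed, in the style of the maps $\gamma^{m,n}$ of Subsection \ref{subsec:extclassi}, from the $F$-split extensions furnished by the hypothesis. Independence of the representative $f$ and of the integer $n$ is checked exactly as in \cite{ct}*{Section 6.3}, invoking Lemma \ref{lem:correrho} (with the sign corrected as in Remark \ref{rem:correrho}) for the compatibility between $\rho$ and $\gamma$, and Lemma \ref{lem:fixhomo} to upgrade the ordinary homotopies that appear to $*$-homotopies after stabilizing through $\cM_X$.

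Bilinearity is immediate, since the operation $\boxplus$ on $\{-,-\}^*_X$ and the composition law \eqref{compo} are preserved by the constructions above. Both associativity and the bifunctoriality identity
\[
(\xi_2 \circ \xi_1) \otimes (\eta_2 \circ \eta_1) \;=\; (\xi_2 \otimes \eta_2) \circ (\xi_1 \otimes \eta_1)
\]
reduce, after unwinding, to the single interchange relation $(\xi \otimesl B') \circ (A_1 \otimesl \eta) = (A_2 \otimesl \eta) \circ (\xi \otimesl B)$ for $\xi \in kk^h(A_1, A_2)$ and $\eta \in kk^h(B, B')$, combined with the triangulated functoriality of $A_i \otimesl -$. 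The interchange in turn follows from the naturality of $\xi \otimesl -$ along $*$-homomorphisms, extended to all of $kk^h$ via the universal property established in Proposition \ref{prop:kkuniv}.

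The main anticipated obstacles are twofold. First, the signs in the composition law \eqref{compo} must be tracked carefully through the tensor product so that the contributions from $\xi \otimesl B_1$ and from $A_2 \otimesl \eta$ combine consistently with the signs appearing in $\xi_2 \circ \xi_1$ and $\eta_2 \circ \eta_1$. Second, the homotopies that enter the well-definedness and interchange arguments of \cite{ct}*{Section 6.3} are not a priori involution-preserving; they must be upgraded using Lemma \ref{lem:fixhomo}, which is precisely why the construction is carried out in the stabilized framework $\{-,-\}^*_X$ rather than at the level of bare $*$-homomorphisms.
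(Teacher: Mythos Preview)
Your approach is essentially the paper's: both build the product by choosing representatives and precomposing with iterated classifying maps that shuffle $J$ past the tensor factors. Your factorization $\xi\otimes\eta=(A_2\otimesl\eta)\circ(\xi\otimesl B_1)$, using that $A_2\otimesl-$ is already a triangulated endofunctor, is only an organizational variant of the paper's one-shot formula $(f\otimes g)\circ\gamma_m^{J^nA_1,B_1}\circ J^m(\gamma_n^{A_1,B_1})$.

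There is, however, a gap in your construction of $\kappa_B:J^n(A_1\otimesl B)\to J^nA_1\otimesl B$. This map is the classifying map of the sequences $J^{k+1}A_1\otimesl B\rightarrowtail T(J^kA_1)\otimesl B\twoheadrightarrow J^kA_1\otimesl B$, and for those to be semi-split (or even exact) one needs $-\otimesl B$ to preserve the universal extension of $J^kA_1$. The hypothesis is about $A_i\otimesl-$, not about $-\otimesl B$, so it does not ``furnish'' these extensions directly. The paper closes this gap with a case distinction: if every $F$-split extension is $\ell$-linearly split, tensoring on either side preserves them and the construction is standard; otherwise, by Example~\ref{ex:tenso} the hypothesis forces $A_1,A_2$ to be flat, and the paper notes that $J^nC$ is flat whenever $C$ is, so the tensored sequences are exact and the maps $\gamma^{C,D}:J(C\otimesl D)\to JC\otimesl D$ can be extracted from the universal property of $T(C\otimesl D)$. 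You should insert this observation before invoking $\kappa_B$.

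A minor point: your appeal to Lemma~\ref{lem:fixhomo} is unnecessary here. All maps in the construction---the $\gamma$-maps, the classifying maps, and the stabilizations---are $*$-homomorphisms, and the homotopies witnessing well-definedness of classifying maps are already $*$-homotopies because $T$ and $J$ are functors on $\ahas$. No non-involutive homotopy arises that would need to be repaired.
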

\begin{proof}
If $F$-split extensions are linearly split, then this is standard \cite{ct}*{Example 6.6.5}. Otherwise $A_i$ is flat for $i=1,2$. By Example \ref{ex:tenso}, $A_i\otimesl-$ and $-\otimesl B_i$%\edit{$\otimesl$} extend to functors $A_i\otimesl-:kk^h\to kk^h$ and $-\otimesl B_i:kk^h_f\to kk^h$.
For $\xi\in kk^h(A_1,A_2)$ and $\eta\in kk^h(B_1,B_2)$, set 
\[
\xi\otimes \eta=(\xi\otimes B_2)\circ (A_1\otimes\eta).
\]
It is straightforward to check that the product above has all the desired properties.
\end{proof}

\begin{prop}\label{prop:kkuniv2}
Let $\fC$ be an abelian category and $H:\ahas\to \fC$ a functor. Assume that $H$ is split-exact, homotopy invariant, $\iota_+$-stable and $M_X$-stable. Then there is a unique homological functor 
$\bar{H}:kk^h\to \fC$ such that $\bar{H}\circ j^h=H$.
\end{prop}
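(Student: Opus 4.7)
The plan is to mirror the strategy of the analogous universal property of $kk$ proved in \cite{ct} for split-exact, matricially stable, homotopy invariant functors, adapted to the involutive setting using the lemmas of Section \ref{sec:stabhom}.

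First, by Lemma \ref{lem:braceuniv}, the hypotheses that $H$ is homotopy invariant, $M_X$-stable and $\iota_+$-stable give a unique factorization $H = \tilde H \circ \can$ with $\tilde H : \{\ahas\}_X \to \fC$. Since $\fC$ is abelian and $H$ is split-exact, $H$ is automatically additive; combined with homotopy invariance this forces $H(A) = 0$ for every $\ast$-algebra $A$ that is contractible in $[\ahas]$, and in particular $H(PA) = H(TA) = 0$ for all $A \in \ahas$.

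The core step is to prove that the classifying map $\rho_A : JA \to A^{\S^1}$ of the path extension \eqref{rho} is sent to a natural isomorphism in $\fC$ by $\tilde H$. The standard Cuntz--Higson--Thom argument for this presents the semi-split path extension as a split extension up to contractible summands, via the mapping cylinder of $TA \to A$; split-exactness of $H$, together with the vanishing on contractible algebras, then forces $\tilde H(\rho_A)$ to be an isomorphism, natural in $A$. Granted this, I define $\bar H$ on $\xi \in kk^h(A,B) = \colim_n \{J^n A, B^{\S^n}\}_X^*$ represented by $[f]$ as the composite of $\tilde H(f)\colon H(J^n A) \to H(B^{\S^n})$ with the iterated isomorphisms $H(A) \cong H(J^n A)$ and $H(B^{\S^n}) \cong H(B)$ assembled out of iterates of $\rho$ and of the transformations $\gamma$ of Section \ref{subsec:extclassi}. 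Well-definedness on the colimit uses Lemma \ref{lem:correrho} (the sign there does not affect the induced morphism, since the target is additive), and functoriality under the composition \eqref{compo} follows from the associativity of the $\star$-product.

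Finally, $\bar H$ is homological: each distinguished triangle in $kk^h$ is the image of a homotopy fibration \eqref{seq:hofi} in $\ahas$, and split-exactness of $H$ together with the $\rho$-isomorphisms identifying loop objects yields the corresponding long exact sequence in $\fC$. Uniqueness is forced, since any homological extension of $H$ must invert $\rho_A$, making the formula above unavoidable. The main obstacle is the $\rho$-isomorphism step: the Cuntz mapping-cylinder manipulations in \cite{ct} involve inner conjugations by elements that are not a priori unitary, and in the present setting one must use Lemma \ref{lem:fixhomo} to upgrade those conjugations to involution-preserving constructions, valid after $\iota_+$-stabilization.
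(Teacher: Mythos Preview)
Your proposal is correct and follows the same route the paper intends: the paper's own proof is the single line ``The proof is the same as in \cite{ct}*{Theorem 6.6.6}'', and what you have written is precisely a sketch of that argument adapted to the $*$-setting, including the one genuine modification needed here---using Lemma \ref{lem:fixhomo} to make the inner conjugations in the mapping-cylinder step involution-preserving after $\iota_+$-stabilization.
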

\begin{proof} The proof is the same as in \cite{ct}*{Theorem 6.6.6}. 
\end{proof}

\bigskip

Let $\epsilon\in\ell$ be a unitary, $A,B\in\ahas$ and $n\in\Z$. Put%\edit{coma en display}
\begin{gather}\label{defi:kk}
kk^h_n(A,B):= \hom_{kk}(j^h(A),j^h(B)[n]), \,\, {}_\epsilon kk^h_n(A,B)=kk^h_n(A,{}_\epsilon M_2 B),\\
kk^h(A,B):=kk^h_0(A,B),\,\, {}_\epsilon kk^h(A,B)={}_\epsilon kk^h_0(A,B).\nonumber
\end{gather}
\begin{rem}\label{rem:epsilon1}
In view of \eqref{map:aduiso}, there is a $*$-isomorphism ${}_1 M_2\cong M_{\pm}$. It follows from this and from Proposition \ref{prop:kkuniv} that for all $A,B\in\ahas$, $\iota_+$ induces a canonical isomorphism
\[
{}_1kk^h_*(A,B)\cong kk^h_*(A,B).
\]
\end{rem}

\begin{ex}\label{exa:mapkkk}
The functor $KH^h_0:\ahas\to KH^h_0(\ell)-\Mod$ satisfies the hypothesis of Proposition \ref{prop:kkuniv2}. Hence the functor $\overline{KH}^h_0$ of the proposition induces a natural homomorphism
\[
kk^h(A,B)\to \hom_{KH^h_0(\ell)}(KH^h_0(A),KH^h_0(B))
\]
Setting $A=\ell$ we obtain a natural map
\begin{equation}\label{map:kktok}
kk^h(\ell,B)\to KH^h_0(B).
\end{equation}
We shall show in Proposition \ref{prop:agree} below that \eqref{map:kktok} is an isomorphism.
\end{ex}

\begin{ex}\label{ex:cones}
Let $f:A\to B$ be a $*$-homomorphism. Then $j^h$ maps \eqref{seq:hofi} to a distinguished triangle, by definition of the latter. One can also fit $f$ into an equivalent triangle by other natural constructions. For example, let $T_f$ be the pullback of $TB\to B$ and $f$; we have a commutative diagram
\begin{equation}\label{diag:Tf=Pf}
\xymatrix{JB\ar[d]\ar[r] &T_f\ar[d]\ar[r]& A\ar@{=}[d]\ar[r]^f& B\ar@{=}[d]\\
          \Omega B\ar[r]& P_f\ar[r]& A\ar[r]_f& B}
\end{equation}
By the argument of \cite{ct}*{Lemma 6.3.10}, the vertical map $JB\to \Omega B$ is a $kk^h$-equivalence. Because the first three terms of the top row above form an extension, it follows that the vertical map $T_f\to P_f$ is a $kk^h$-equivalence. Summing up, the top row is $kk^h$-isomorphic to the bottom row, and is thus a triangle in $kk^h$. For another example, let $\Gamma_f$ be the pullback of $\Sigma f$ and the canonical surjection $\Gamma B\to B$. By the argument of \cite{ct}*{Theorem 6.4.1}, $j^h(\Gamma B)=0$. Hence 
the classifying map  $J\Sigma B\to M_\infty B$ of the cone extension is a $kk^h$-equivalence, and therefore $T_{\Sigma f}\to \Gamma_f$ is a $kk^h$-equivalence. Thus the vertical maps in the commutative diagram below form an isomorphism of triangles
\begin{equation}\label{diag:Tsigmaf=gammaf}
\xymatrix{J\Sigma B\ar[d]\ar[r] &T_{\Sigma f}\ar[d]\ar[r]& \Sigma A\ar@{=}[d]\ar[r]^{\Sigma f}& \Sigma B\ar@{=}[d]\\
          M_\infty B\ar[r]& \Gamma_f\ar[r]& \Sigma A\ar[r]_{\Sigma f}& \Sigma B.}
\end{equation}
It follows that the bottom line of \eqref{diag:Tsigmaf=gammaf} is a distinguished triangle in $kk^h$. The map \eqref{diag:Tsigmaf=gammaf} together with that of \eqref{diag:Tf=Pf} with $\Sigma (f)$ substituted for $f$ is a zig-zag of $kk^h$-equivalences. In particular $j^h(\Gamma_f)\cong j^h(P_{\Sigma f})=j^h(\Sigma P_f)$ and the bottom line of \eqref{diag:Tsigmaf=gammaf} is isomorphic in $kk^h$ to the homotopy fibration associated to $\Sigma f$, which in turn is the suspension of \eqref{seq:hofi}: 
\begin{equation}\label{diag:hofikar=sigmahofi}
\xymatrix{j^h(M_\infty B)\ar[r]\ar[d]^{\cong}&j^h(\Gamma_f)\ar[d]^{\cong}\ar[r]& j^h(\Sigma A)\ar[d]^{\cong}\ar[r]^{j^h(\Sigma f)}&j^h(\Sigma B)\ar[d]^{\cong}
\\
j^h(B)\ar[r]& j^h(P_f)[-1]\ar[r]&j^h(A)[-1]\ar[r]^{j^h(f)[-1]}&\,\, j^h(B)[-1].}
\end{equation} 
\end{ex}
\section{Coproducts}\label{sec:copro}
\numberwithin{equation}{section}
A \emph{retract} of a $*$-algebra $A\in\ahas$ consists of a $*$-algebra $C$ and $*$-homomorphisms $i:C\to A$ and $\alpha:A\to C$ such that $\alpha\circ i=\id_C$. Let $j:C\leftrightarrow B:\beta$ be another retract and consider the coproduct $A\coprod_C B$. Let $I=\ker\alpha$, $J=\ker\beta$. Then $A\coprod_CB$ is the $\ell$-module
\[
C\oplus I\oplus J\oplus I\otimes_{\tilde{C}}J\oplus J\otimes_{\tilde{C}}I\oplus\dots
\]
equipped with the obvious product and involution. The projection onto $C$ is a $*$-homomorphism, and its kernel
is the coproduct $I\coprod J$. The direct sum of all tensors with $2$ factors or more is an ideal in $K\triqui A\coprod_CB$, and the quotient 
\[
A\oplus_CB:=(A\coprod_CB)/K
\]
is the $\ell$-module $C\oplus I\oplus J$ equipped with the summand-wise involution and the product
\[
(c_1,i_1,j_1)(c_2,i_2,j_2)=(c_1c_2, c_1i_2+i_1c_2+i_1i_2, c_1j_2+j_1c_2+j_1j_2).
\]
\begin{prop}\label{prop:coprod}
The projection $\pi: A\coprod_CB\to A\oplus_CB$ is a $kk^h$-equivalence. 
\end{prop}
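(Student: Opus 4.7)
The plan is to prove the proposition by showing that the kernel of $\pi$ is zero in $kk^h$. Let $K=\ker(\pi)$; by construction
\[
K=(I\otimes_{\tilde C}J)\oplus(J\otimes_{\tilde C}I)\oplus(I\otimes_{\tilde C}J\otimes_{\tilde C}I)\oplus\cdots
\]
is the $*$-ideal of $A\coprod_C B$ consisting of tensors of length at least two. The obvious $\ell$-linear inclusion $A\oplus_C B=C\oplus I\oplus J\hookrightarrow A\coprod_C B$ is involution preserving and sections $\pi$ in the underlying category $\fU$, so the extension $K\rightarrowtail A\coprod_C B\twoheadrightarrow A\oplus_C B$ is semi-split. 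By Proposition \ref{prop:kkuniv}, excision produces a distinguished triangle in $kk^h$, so it suffices to prove $j^h(K)=0$.

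To prove this I would follow the argument of \cite{ct}*{Theorem 7.1.1}, which treats the analogous statement for $kk$ of plain algebras. There one filters $K$ by the tensor length and exhibits an infinite ``swindle'' style homotopy whose source is $K$ and whose target factors through a $kk$-contractible object. Concretely, one produces elements $u_0,u_1$ in a larger ind-algebra $C\supset A\coprod_C B$ so that the pair $(u_0,u_0^*)$ and $(u_1,u_1^*)$ both $*$-multiply $K$ into a sum of tensor powers, with $\ad(u_0,u_0^*)$ the canonical inclusion $K\hookrightarrow A\coprod_C B$ followed by the shift into higher-length tensors and $\ad(u_1,u_1^*)=0$. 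In the non-involutive setting of \cite{ct} the connecting path $(v(t),w(t))\in C[t]^2$ between these two pairs of multipliers does \emph{not} satisfy $w(t)=v(t)^*$, so the induced homotopy $\ad(v(t),w(t))$ is a plain algebra homotopy but not a $*$-homotopy.

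This is the only step that does not transport verbatim to $kk^h$, and it is repaired by Lemma \ref{lem:fixhomo}: the hypothesis \eqref{wav} is exactly that both $v(t)Kv(t)^*$ and $w(t)^*Kw(t)$ land inside the relevant $*$-ideal of $(A\coprod_C B)[t]$, which can be checked directly from the explicit formulas for $v(t),w(t)$ borrowed from \cite{ct}. Thus after composing with $\iota_+$, the class $c(v,w)$ produces an honest $*$-homotopy $\iota_+\circ\ad(u_0,u_0^*)\sim^*\iota_+\circ\ad(u_1,u_1^*)$ inside $M_\pm(A\coprod_C B)[t]$. Since $j^h$ is $\iota_+$-stable by Proposition \ref{prop:kkuniv}, this is enough to conclude that the inclusion $K\hookrightarrow A\coprod_C B$ is zero in $kk^h$, and iterating the same homotopy up the tensor-length filtration (exactly as in \cite{ct}) yields $j^h(K)=0$.

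The main obstacle I anticipate is bookkeeping rather than anything conceptually new: one must track precisely which tensor-length strata are moved by $v(t)$ and $w(t)^*$ in order to check the ideal-containment condition \eqref{wav}, and one must verify that the ind-algebra version of the swindle that appears in the proof of \cite{ct}*{Theorem 7.1.1} lives naturally in $\ahas$ (which, under the $\lambda$-assumption \ref{stan:lambda}, is harmless because $\ell$-linear sections can be averaged to involution preserving ones as noted in Subsection \ref{subsec:under}). Once these verifications are done, the excision triangle for the semi-split extension above immediately gives $j^h(\pi)$ an inverse, proving the proposition.
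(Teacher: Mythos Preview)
Your proposal diverges from the paper's proof and, more importantly, rests on a mischaracterization of \cite{ct}*{Theorem 7.1.1}. Neither that theorem nor the present paper proves the result by showing $j^h(K)=0$ via a swindle on the kernel. After the reduction to $C=0$, both arguments construct an explicit candidate inverse: one defines $\diag:A\oplus B\to M_2(A\coprod B)$, $(a,b)\mapsto aE_{1,1}+bE_{2,2}$, and exhibits a rotation matrix $u$ (connected to $1$ by an invertible, but not unitary, path) conjugating $M_2(\pi)\circ\diag$ into the corner inclusion $i$. The only new step here is that this non-$*$ homotopy is repaired by Lemma~\ref{lem:fixhomo}, yielding $\iota_+\circ M_2(\pi)\circ\diag\sim^*\iota_+\circ i$; a symmetric argument handles $\diag\circ\pi$. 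So the use of Lemma~\ref{lem:fixhomo} you correctly anticipate is applied to this rotation, not to any swindle.

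The swindle you sketch has a genuine gap. There is no evident ``shift into higher-length tensors'' endomorphism of $K$: the naive rescalings $a\mapsto ta$ on one factor are not algebra homomorphisms, and the tensor-length filtration on $A\coprod B$ does not come with a multiplier pair $(u_0,u_0^*)$ that implements such a shift on $K$. Moreover, even if you produced a $*$-homotopy showing that the \emph{inclusion} $K\hookrightarrow A\coprod B$ is null in $kk^h$, that would not give $j^h(K)=0$; you would need $\id_K\sim^*0$ as endomorphisms of $K$ (after stabilization), which is a strictly stronger statement. Your appeal to ``iterating up the tensor-length filtration exactly as in \cite{ct}'' points to an argument that is not actually there. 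If you want to pursue the kernel route, you would have to produce a genuine contracting $*$-homotopy of $K$ or an Eilenberg swindle inside some $M_X K$, and neither is supplied here.
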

\begin{proof}
As in \cite{ct}*{Theorem 7.1.1} one reduces to proving the case $C=0$. Let $\diag:A\oplus B\to M_2(A\coprod B)$,
$\diag(a,b)=aE_{1,1}+bE_{2,2}$. Let $i:A\oplus B\to M_2(A\oplus B)$ be as in Example \ref{ex:cornerid}. The matrix $u=E_{1,1}(1,0)+E_{1,2}(0,-1)+E_{2,1}(0,1)+E_{2,2}(1,0)\in M_2\tilde{A}$ is unitary, satisfies
$\ad(u)\circ M_2(\pi)\circ\diag=i$ and is connected to the identity by a path $u(t)\in \GL(\tilde{A}[t])$. Thus  $\iota_+ \circ M_2(\pi)\circ\diag\sim^*\iota_+\circ i$ by Lemma 
\ref{lem:fixhomo}; by Lemma \ref{lem:braceuniv} and Example \ref{ex:cornerid}, this implies that $M_2(\pi)\circ\diag$ is an isomorphism in $\{\ahas\}$ and therefore also in $kk^h$. A similar argument shows that $\diag\circ\pi$ is an isomorphism in $kk^h$; this concludes the proof.
\end{proof}
In the next corollary and elsewhere, for $A\in\ahas$, we write  $QA=A\coprod A$ and $QA\vartriangleright qA=\ker(\id_A\coprod\id_A:QA\to A)$.

\begin{coro}\label{coro:deltiso}
Let $\delta:qA\to A$ be the restriction of the map $\id_A\coprod 0:QA\to A$.
Then $j^h(\delta)$ is an isomorphism in $kk^h$.
\end{coro}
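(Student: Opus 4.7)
My plan is to deduce the corollary directly from Proposition~\ref{prop:coprod} together with the split-exactness of $j^h$ (an excisive homology theory sends split extensions to split triangles, and split triangles in an additive category are direct-sum decompositions, so Proposition~\ref{prop:kkuniv} gives exactly what is needed).

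First, note that $\delta$ is the restriction to $qA$ of the $*$-homomorphism $\pi_1:=\id_A\coprod 0:QA\to A$, and $qA=\ker(\nabla)$ where $\nabla:=\id_A\coprod\id_A$. The extension $qA\to QA\xrightarrow{\nabla}A$ is split by the first coproduct inclusion $i_1:A\to QA$, so applying $j^h$ yields
\[
j^h(QA)\cong j^h(qA)\oplus j^h(A)
\]
in $kk^h$; similarly $j^h(A\oplus A)\cong j^h(A)\oplus j^h(A)$ canonically via the two summand inclusions $j_1,j_2:A\to A\oplus A$.

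Next, Proposition~\ref{prop:coprod} gives an isomorphism $j^h(\pi):j^h(QA)\to j^h(A\oplus A)$, and since $\pi\circ i_\alpha=j_\alpha$ for $\alpha=1,2$, this isomorphism matches the summand inclusions on the two sides. Using $\nabla\circ i_1=\nabla\circ i_2=\id_A$, one concludes that $j^h(\nabla)$ corresponds under $j^h(\pi)$ to the codiagonal $\nabla_\oplus:j^h(A)\oplus j^h(A)\to j^h(A)$. Similarly, $\pi_1\circ i_1=\id_A$ and $\pi_1\circ i_2=0$ show that $j^h(\pi_1)$ corresponds to the first projection $p_1:j^h(A)\oplus j^h(A)\to j^h(A)$.

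Finally, in any additive category the kernel of the codiagonal $C\oplus C\to C$ is $C$ embedded via $x\mapsto j_1(x)-j_2(x)$, with retraction $p_1$. Transporting this computation back through $j^h(\pi)$ identifies $j^h(qA)\cong\ker(j^h(\nabla))$ with $j^h(A)$ in such a way that $j^h(\delta)=j^h(\pi_1)\circ j^h(\mathrm{incl})$ becomes the identity on $j^h(A)$, so $j^h(\delta)$ is an isomorphism. No serious obstacle arises: the argument is a direct bookkeeping exercise, whose only inputs are the additive/triangulated structure of $kk^h$ and the isomorphism $j^h(\pi)$ provided by Proposition~\ref{prop:coprod}.
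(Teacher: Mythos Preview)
Your proof is correct and follows exactly the approach the paper indicates: the paper's proof simply says that the statement is a formal consequence of Proposition~\ref{prop:coprod} and cites \cite{newlook}*{Proposition 3.1 (b)} for the formal argument, and what you have written is precisely that formal argument spelled out in the setting of $kk^h$. One small remark: when you say the kernel of the codiagonal is retracted by $p_1$, it is worth noting that $p_1\circ j^h(\pi)\circ j^h(\mathrm{incl})=-p_2\circ j^h(\pi)\circ j^h(\mathrm{incl})$ because $(p_1+p_2)\circ j^h(\pi)\circ j^h(\mathrm{incl})=j^h(\nabla)\circ j^h(\mathrm{incl})=0$, which is what makes your identification of $j^h(\delta)$ with the fiber isomorphism go through cleanly.
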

\begin{proof}
This is a formal consequence of Proposition \eqref{prop:coprod}; see \cite{newlook}*{Proposition 3.1 (b)}.
\end{proof}
\begin{rem}\label{rem:sumtria}
Let $\jmath_i:A\to A\oplus A$ and $\pi_i:A\oplus A\to A$ be the two inclusions and the two projections. Omitting $j^h$, we have an isomorphism of triangles in $kk^h$
\[
\xymatrix{qA\ar[d]^{\delta}\ar[r]&QA\ar[r]\ar[d]& A\ar@{=}[d]\\
           A\ar[r]^(.4){\jmath_1-\jmath_2}&A\oplus A\ar[r]^(.6){\pi_1+\pi_2}&A}
\]					
\end{rem}
\section{Recovering \topdf{$KH^h$}{KHh} from \topdf{$kk^h$}{kkh}}\label{sec:agree}

\begin{prop}\label{prop:agree}
The natural map \eqref{map:kktok} is an isomorphism for every $B\in \ahas$.
\end{prop}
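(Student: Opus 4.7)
The plan is to follow the argument of \cite{ct}*{Theorem 8.2.1}, adapted to the hermitian setting. The strategy is to rewrite both sides of \eqref{map:kktok} as the same filtered colimit indexed by $n$. By the definition \eqref{eq:kk*} of $kk^h(\ell,B)$ and the description \eqref{eq:khykv} of $KH^h_0(B)$, we have
\[
kk^h(\ell,B) = \colim_n \{J^n\ell, B^{\S^n}\}^*, \qquad KH^h_0(B) = \colim_n KV^h_n(\Sigma^n B).
\]

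The first and main step is to establish, for any $A \in \ahas$ and $n \geq 1$, a natural bijection
\[
\{J^n\ell, A\}^* \cong KV^h_n(A).
\]
This is the hermitian analogue of the key identification used in \cite{ct}*{Theorem 8.2.1}. For $n=1$: a $*$-homomorphism $J\ell \to \cM_X A$ classifies, up to equivalence, a semi-split $*$-extension of $\ell$ by $\cM_X A$; using hermitian and matricial stability (Lemma \ref{lem:hermstab} and the $M_X$-stability built into $\cM_X$) together with the explicit simplicial model of $A^\Delta$, one identifies $*$-homotopy classes of such maps with $\pi_0(\cO(A^\Delta)) = KV^h_1(A)$. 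For higher $n$ one iterates using $J^{n+1}\ell = J(J^n\ell)$ together with the boundary maps of the extensions defining $J$ and of the cone extension, which match the connecting maps $KV^h_n \to KV^h_{n+1}\circ\Sigma$ of \eqref{map:partial}.

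Once this identification is in hand, setting $A = B^{\S^n}$ and using that $B^{\S^n}$ and $\Sigma^n B$ are $kk^h$-related through the cone and path extensions yields a compatible system of isomorphisms $\{J^n\ell, B^{\S^n}\}^* \cong KV^h_n(B^{\S^n}) \cong KV^h_n(\Sigma^n B)$, and so an isomorphism $kk^h(\ell,B) \cong KH^h_0(B)$ in the colimit. A final verification checks that this isomorphism coincides with the natural map \eqref{map:kktok}; this follows from the uniqueness part of Proposition \ref{prop:kkuniv2} applied to the functor $KH^h_0$, once one observes that both the colimit-comparison map and \eqref{map:kktok} agree on the class of $\mathrm{id}_\ell$.

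The main obstacle is the first step, i.e.\ the identification $\{J^n\ell, A\}^* \cong KV^h_n(A)$. The analogous non-involutive statement rests on a detailed bookkeeping of classifying maps of linearly split extensions; in our setting not every algebra homotopy between $*$-homomorphisms is itself a $*$-homotopy. The key technical input here will be Lemma \ref{lem:fixhomo}, which, together with the $\lambda$-assumption \ref{stan:lambda} for averaging sections, allows us to upgrade algebra homotopies to $*$-homotopies after composing with a corner inclusion into $M_\pm$. Absorbing this corner inclusion into $\cM_X$ is exactly what lets the hermitian count of $*$-homotopy classes match $KV^h_n(A)$, and hence what lets the classical argument of \cite{ct}*{Theorem 8.2.1} be carried over.
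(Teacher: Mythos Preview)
Your proposal does not follow the argument of \cite{ct}*{Theorem 8.2.1}; it attempts something quite different, and the central step is not justified.

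The actual argument in \cite{ct}*{Theorem 8.2.1}, and the one the paper adapts, does \emph{not} proceed by identifying $\{J^n\ell,A\}^*$ with $KV_n$ termwise in the colimit. Instead it constructs an explicit candidate inverse $\alpha_A:KH_0^h(A)\to kk^h(\ell,A)$ using quasi-homomorphisms (Remark~\ref{rem:qhoms}) and the Cuntz $q$-algebra: a class in $K_0^h(A)$ is represented by a quasi-homomorphism $\ell\rightrightarrows {}_1M_2M_\infty\tilde A\unrhd {}_1M_2M_\infty A$, which is the same as a $*$-homomorphism $q\ell\to {}_1M_2M_\infty A$; composing with the inverse of $j^h(\delta):j^h(q\ell)\iso j^h(\ell)$ from Corollary~\ref{coro:deltiso} produces an element of $kk^h(\ell,A)$. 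One then checks that $\alpha$ is a $KH_0^h(\ell)$-module map and that $\alpha_\ell$ is a ring homomorphism, and the verification that $\alpha$ and $\beta$ are mutually inverse is a short formal argument using this module structure. The $q$-algebra result of Section~\ref{sec:copro} is precisely what makes this work, and your outline never invokes it.

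Your alternative route has two concrete gaps. First, the identification $\{J^n\ell,A\}^*\cong KV^h_n(A)$ is asserted, not proved; your $n=1$ sketch (``classifies a semi-split $*$-extension \dots one identifies \dots with $\pi_0\cO(A^\Delta)$'') hides exactly the nontrivial content, and there is no such statement available in the paper or in \cite{ct} to cite. Second, even granting that identification, the passage $KV^h_n(B^{\S^n})\cong KV^h_n(\Sigma^n B)$ is not justified: $KV^h$ is not excisive and does not factor through $kk^h$, so ``$B^{\S^n}$ and $\Sigma^nB$ are $kk^h$-related'' does not by itself give an isomorphism on $KV^h_n$. You would also have to match the two systems of transition maps (one via $\rho_B\circ J(-)$, the other via the boundary of the cone extension), which you do not address.
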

\begin{proof}
Write $\beta:kk^h(\ell, B)\to KH^h_0(B)$ for \eqref{map:kktok}. Our proof proceeds much in the same way as \cite{ct}*{Theorem 8.2.1}. By Remark \ref{rem:qhoms}, the set 
$qq(\ell,B)$ of $*$-quasi-homomorphisms%\edit{$*$-quasi} $\ell \rightrightarrows {}_1M_2 M_\infty \tilde{B}\rhd {}_1M_2 M_\infty B$ maps onto $K_0^h(B)$. As in \emph{loc. cit.} we use this together with Corollary \ref{coro:deltiso}, to obtain a natural map $\alpha_B :KH_0^h(B) \to kk^h(\ell,B)$. Moreover, it is straightforward to check that $\alpha_\ell:KH_0^h(\ell)\to kk^h(\ell,\ell)$ is a ring homomorphism for the product of Lemma \ref{lem:prodkhh} on its source and composition on its target, and moreover that for every $B$, $\alpha_B$ is a $KH_0^h(\ell)$-module homomorphism. Taking this into account, the argument of \cite{ct}*{Theorem 8.2.1} applies verbatim to show that that $\alpha$ and $\beta$ are inverse isomorphisms.
\end{proof}

\begin{coro}\label{coro:agree}
For every unitary element $\epsilon\in\ell$, every $A\in\ahas$ and every $n\in\Z$, there is a natural isomorphism of $KH^h_0(\ell)$-modules
\[
{}_\epsilon kk^h_n(\ell,A) \cong {}_{\epsilon}KH^h_n(A).
\]
\end{coro}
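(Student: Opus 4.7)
The plan is to reduce the corollary to Proposition \ref{prop:agree} via the definition of \({}_\epsilon kk^h\) and \({}_\epsilon KH^h\). Unfolding \eqref{defi:kk} gives \({}_\epsilon kk^h_n(\ell,A) = kk^h_n(\ell, {}_\epsilon M_2 A)\), while Lemma \ref{lem:kepsilon} supplies a natural isomorphism \({}_\epsilon KH^h_n(A) \cong KH^h_n({}_\epsilon M_2 A)\). So it suffices to produce, for each $B \in \ahas$ and each $n \in \Z$, a natural isomorphism $kk^h_n(\ell,B) \cong KH^h_n(B)$, and then apply it with $B = {}_\epsilon M_2 A$.

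For $n=0$ this is exactly Proposition \ref{prop:agree}. For $n \ge 0$, unfolding the shift as defined in the proof of Proposition \ref{prop:kkuniv} yields
\[
kk^h_n(\ell,B) = \hom_{kk^h}(j^h(\ell), j^h(B)[n]) = kk^h(\ell, \Omega^n B) \cong KH^h_0(\Omega^n B),
\]
where the last step is Proposition \ref{prop:agree} applied to $\Omega^n B$. That $KH^h_0(\Omega^n B) = KH^h_n(B)$ is immediate from the colimit formula \eqref{defi:khh}, since both expressions unravel to $\colim_m K^h_{-m}(\Omega^{m+n} B)$.

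For $n = -k < 0$ one has $j^h(B)[-k] = j^h(\Sigma^k B)$, whence $kk^h_{-k}(\ell, B) \cong KH^h_0(\Sigma^k B)$ by Proposition \ref{prop:agree}. To identify this with $KH^h_{-k}(B)$, I would use the alternative description \eqref{eq:khykv}, which gives $KH^h_0(\Sigma^k B) = \colim_m KV^h_m(\Sigma^{m+k} B)$; reindexing $m' = m+k$ converts this cofinal colimit into $\colim_{m' \ge k} KV^h_{m'-k}(\Sigma^{m'} B) = KH^h_{-k}(B)$. Combining these three cases and precomposing with ${}_\epsilon M_2$ as above yields the claimed isomorphism.

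The principal point requiring care, and the main potential obstacle, is ensuring that every step of this chain is natural in $A$ and compatible with the remaining structure: the isomorphism from Proposition \ref{prop:agree} should intertwine the loop/suspension equivalences $j^h(\Omega B) \cong j^h(B)[1]$ in $kk^h$ (coming from contractibility of $\cP B$) with the shift isomorphisms $KH^h_n(B) \cong KH^h_0(\Omega^n B)$ coming from the defining colimit of $KH^h$. This compatibility should be automatic because both the triangulated structure on $kk^h$ and the colimit description of $KH^h$ are built out of the same path and cone extensions, but I would verify it explicitly by tracking how the connecting maps in the two theories correspond under the natural map \eqref{map:kktok}.
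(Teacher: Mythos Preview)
Your proposal is correct and follows the route the paper intends: the corollary is stated without proof precisely because it is an immediate consequence of Proposition~\ref{prop:agree} applied to ${}_\epsilon M_2 A$ (together with Lemma~\ref{lem:kepsilon}), with the degree shift handled by the loop/suspension identifications you spell out. The only small wrinkle is that the suspension in $kk^h$ is $\Sigma_X$ rather than $\Sigma=\Sigma_\N$, but since both cone algebras $\Gamma_X$ and $\Gamma$ are $kk^h$-contractible (see Example~\ref{ex:cones} and the proof of Proposition~\ref{prop:kkuniv}) one has $j^h(\Sigma B)\cong j^h(\Sigma_X B)$, so your reindexing argument for negative $n$ goes through unchanged.
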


\begin{lem}\label{lem:prodagree}
If either $A$ or $B$ are flat or if $F$-split extensions are $\ell$-linearly split, then under the isomorphism of Proposition \ref{prop:agree} the cup-product of Lemma \ref{lem:prodkhh} corresponds to the tensor product of Lemma \ref{lem:tenso}.  
\end{lem}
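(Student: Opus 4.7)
The plan is to reduce the statement to the base case established in Proposition \ref{prop:agree}, namely that $\beta_\ell : kk^h(\ell,\ell) \to KH_0^h(\ell)$ is a ring isomorphism. The argument proceeds by a preliminary reduction to degree zero followed by a Yoneda argument.

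First, one reduces to the case $m=n=0$. Both products are compatible with the passage to higher degrees: the cup product on $KH^h$ intertwines the boundary maps of the path extension by Lemma \ref{lem:prod-sigmaomega} and Corollary \ref{coro:prod-sigmaomega}, while the tensor product in $kk^h$ respects shifts because $A\otimesl -$ is a triangulated functor (Example \ref{ex:tenso}). Since the isomorphism $\beta$ of Corollary \ref{coro:agree} is natural for these boundary maps, matching the two products in general degrees reduces to matching them in degree zero.

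For $m=n=0$, fix $\xi\in kk^h(\ell,A)$ and consider the natural transformations
\[
\tau_\xi : kk^h(\ell,-) \Rightarrow kk^h(\ell, A\otimesl -), \quad \eta \longmapsto \xi\otimes\eta,
\]
\[
\sigma_\xi : KH_0^h(-) \Rightarrow KH_0^h(A\otimesl -), \quad \eta \longmapsto \beta_A(\xi)\star\eta,
\]
viewed as transformations of functors $kk^h\to\ab$. Both targets extend to $kk^h$: the first via Example \ref{ex:tenso} and functoriality of $\otimes$; the second because $KH_0^h(A\otimesl-)$ is split-exact, homotopy invariant, matricially and hermitian stable (Lemma \ref{lem:khprops} combined with the hypothesis that $A\otimesl-$ preserves semi-split extensions), and hence factors through $kk^h$ by Proposition \ref{prop:kkuniv2}. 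Since $kk^h(\ell,-)$ is representable by $\ell$, Yoneda identifies a natural transformation $T$ from $kk^h(\ell,-)$ to any additive functor $F$ on $kk^h$ with the element $T_\ell(\id_\ell)\in F(\ell)$.

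Under this identification, $\tau_\xi$ corresponds to $\tau_\xi(\id_\ell)=\xi\otimes\id_\ell=\xi \in kk^h(\ell, A\otimesl\ell)=kk^h(\ell,A)$, using Lemma \ref{lem:tenso} and the identification $A\otimesl\ell\cong A$. Transported back through $\beta$, $\sigma_\xi$ corresponds to $\beta_A^{-1}(\beta_A(\xi)\star\beta_\ell(\id_\ell))=\beta_A^{-1}(\beta_A(\xi)\star 1)=\xi$, using that $\beta_\ell(\id_\ell)=1\in KH_0^h(\ell)$ is the unit (by the ring isomorphism assertion of Proposition \ref{prop:agree}) together with the unital module property from Corollary \ref{coro:kh-mod}. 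Both natural transformations thus correspond to the same element $\xi$, so $\beta\circ\tau_\xi = \sigma_\xi\circ\beta$ by Yoneda, which is the desired identity $\beta(\xi\otimes\eta)=\beta(\xi)\star\beta(\eta)$. The main delicate point will be verifying that $\sigma_\xi$ genuinely extends from a natural transformation on $\ahas$ to one on $kk^h$, which requires invoking the uniqueness clause of Proposition \ref{prop:kkuniv2} together with naturality of $\star$ in both variables from Lemma \ref{lem:prodkhh}.
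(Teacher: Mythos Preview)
The paper's own proof is the single word ``Straightforward.'' Your Yoneda argument is a correct way to supply the details, and the delicate point you flag—that $\sigma_\xi$ descends from $\ahas$ to $kk^h$—is handled by the uniqueness clause of Proposition \ref{prop:kkuniv2} applied to the arrow-category valued functor $B\mapsto\bigl(\sigma_{\xi,B}:KH_0^h(B)\to KH_0^h(A\otimesl B)\bigr)$, so there is no circularity.

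What the authors likely intend by ``straightforward'' is a more direct unwinding: the inverse $\alpha$ constructed in the proof of Proposition \ref{prop:agree} sends a $K_0^h$-class, represented by a quasi-homomorphism $\ell\rightrightarrows {}_1M_2M_\infty\tilde{B}\unrhd {}_1M_2M_\infty B$, through the $q$-algebra isomorphism of Corollary \ref{coro:deltiso}; both $\star$ (tensor of idempotents, then pass to the colimit \eqref{defi:khh}) and $\otimes$ (tensor of representing maps $J^nA\to\cM_X B^{\S^n}$ in Lemma \ref{lem:tenso}) are literal tensor products at the level of representatives, and one checks by inspection that $\alpha$ carries one to the other. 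The proof of Proposition \ref{prop:agree} already records that $\alpha_\ell$ is a ring homomorphism and that each $\alpha_B$ is a $KH_0^h(\ell)$-module map, which is essentially the base case of this verification. Your approach trades that explicit tracing for a cleaner representability argument; both routes are valid.
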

\begin{proof} Straightfoward. 
\end{proof}

\section{The functors \topdf{$\indu$}{ind}, \topdf{$\res$}{res} and \topdf{$\Lambda$}{Lambda}}\label{sec:indres}
%If $A\in\ahas$, we write $A^{\op}$ for the opposite ring, regarded as an $\ell$-algebra via the conjugate action. 
Let $\inv(\ell)$ be as in Example \ref{ex:inv}; the map $\ell\to \inv(\ell)$, $a\mapsto (a,a^*)$ is a homomorphism of $*$-rings. Composing $\inv:\aha\to\alg^*_{\inv(\ell)}$ with restriction of scalars along the map we have just defined, we obtain a functor $\indu:\aha\to \ahas$.  
The forgetful functor $\res:\ahas\to\aha$ is left adjoint to $\indu$. The unit and counit of this adjunction are the maps
\begin{equation}\label{map:conunit}
\eta_A:A\to A\oplus A^{op},\quad \eta(a)=(a,a^*), \quad pr_1:B\oplus B^{\op}\to B, \quad pr_1(x,y)=x.
\end{equation}
Fix the underlying category $\fA$ to be one of $\sets$ or $\ell-\Mod$ and let $j:\aha\to kk$ and $j^h:\ahas\to kk^h$ be the universal functors. Then $\res$ and $\indu$ induce functors $kk^h\leftrightarrow kk$ which by abuse of notation we shall still call $\res$ and $\indu$. 

\begin{prop}\label{prop:adjunction}
The functors $\res:kk^h\leftrightarrow kk:\indu$ are both right and left adjoint to one another; in other words, for every $A\in\ahas$ and $B\in \aha$ there are natural isomorphisms
\[
kk(\res(A),B)\cong kk^h(A,\indu(B))\text{ and }kk^h(\indu(B),A)\cong kk(B,\res(A)).
\]
\end{prop}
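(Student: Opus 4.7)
The two adjunctions require rather different arguments. The first, $\res \dashv \indu$, descends from the corresponding algebra-level adjunction: a $*$-homomorphism $A \to \indu B = B \oplus B^{\op}$ is uniquely determined by its first projection, an arbitrary $\ell$-algebra map $\res A \to B$, giving the natural bijection $\hom_{\ahas}(A, \indu B) \cong \hom_{\aha}(\res A, B)$ witnessed by the unit $\eta_A$ and counit $pr_1$ of \eqref{map:conunit}. Both $\res$ and $\indu$ descend to functors $kk^h \leftrightarrows kk$ by the universal property of Proposition \ref{prop:kkuniv}: $\res$ trivially preserves all stability properties, while $\indu(B) \cong \Lambda \otimes_\ell B$ is an instance of Example \ref{ex:tenso}. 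The formulas $\Phi(f) = \indu(f) \circ j^h(\eta_A)$ and $\Psi(g) = j(pr_1) \circ \res(g)$ are then mutually inverse, the zigzag identities being inherited from their algebraic counterparts.

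For $\indu \dashv \res$ no analogous algebraic adjunction exists, and the essential ingredient is a $kk^h$-level counit $\phi_A: \indu \res A \to A$. To build it, consider the $*$-algebra homomorphism
\[
\mu_A: \indu \res A = A \oplus A^{\op} \lra {}_1 M_2 A, \qquad (a, b) \mapsto \diag(a, b^*).
\]
Direct computation confirms $\mu_A$ is $\ell$-linear (using $\ell$-commutativity) and involution-preserving: the involution on ${}_1 M_2 A$ sends $\diag(a, b^*)$ to $\diag(b, a^*)$, matching the swap on $A \oplus A^{\op}$; multiplicativity is immediate from $(a_1, b_1)(a_2, b_2) = (a_1 a_2, b_2 b_1)$ in $A \oplus A^{\op}$. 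By Lemma \ref{lem:hermstab2}, the corner inclusion $\iota_\lambda: A \to {}_1 M_2 A$, $a \mapsto p_\lambda \otimes a$, is a $kk^h$-isomorphism; define $\phi_A := j^h(\iota_\lambda)^{-1} \circ j^h(\mu_A)$. The corresponding $kk$-unit $\eta'_B: B \to \res \indu B$ is taken to be the $j$-image of the non-unital $\ell$-algebra inclusion $b \mapsto (b, 0)$.

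One zigzag identity, $\res(\phi_A) \circ j(\eta'_{\res A}) = \id_{\res A}$, reduces to a routine matricial-stability computation since $\mu_{\res A}(a, 0) = \diag(a, 0)$ decomposes in $kk$ as a corner class on $a$. The main technical obstacle is the other zigzag $\phi_{\indu B} \circ \indu(\eta'_B) = \id_{\indu B}$ in $kk^h$: the composition produces the diagonal $\diag((x, 0), (0, y)) \in {}_1 M_2 \indu B$, which in $kk^h$ decomposes as the abelian-monoid sum (in $\{\ahas\}$, cf.\ Section \ref{sec:stabhom}) of two corner-class applications to $(x, 0)$ and $(0, y)$ respectively; both corner classes agree with the identity in $kk^h$ by matricial and hermitian stability, so applying $j^h(\iota_\lambda)^{-1}$ recovers $(x, 0) + (0, y) = (x, y) = \id_{\indu B}$. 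Given both zigzag identities, the formulas $\Phi'(h) = \res(h) \circ j(\eta'_B)$ and $\Psi'(k) = \phi_A \circ \indu(k)$ are mutually inverse bijections witnessing the adjunction $\indu \dashv \res$.
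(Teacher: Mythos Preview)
Your overall strategy matches the paper's: the first adjunction descends from the algebra-level adjunction $\res\dashv\indu$, and for the second you use the same unit $\eta'_B=j(b\mapsto(b,0))$ and the same counit built from $\mu_A=\psi_A:(a,b)\mapsto\diag(a,b^*)$ composed with the inverse of the corner inclusion from Lemma~\ref{lem:hermstab2}. Your treatment of the first zigzag (in $kk$, both $a\mapsto e_{1,1}a$ and $a\mapsto p_\lambda a$ are rank-one corner inclusions, hence agree by matricial stability) is correct, and the paper does the same thing via an explicit conjugating matrix.

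The second zigzag, however, has a genuine gap. You claim the map
\[
(x,y)\longmapsto \diag\bigl((x,0),(0,y)\bigr)\in {}_1M_2\,\indu B
\]
``decomposes as the abelian-monoid sum in $\{\ahas\}$ of two corner-class applications to $(x,0)$ and $(0,y)$''. But neither of the intended summands $(x,y)\mapsto e_{1,1}(x,0)$, $(x,y)\mapsto e_{2,2}(0,y)$ is a $*$-homomorphism: the involution on $\indu B$ swaps coordinates, so $(x,y)^*=(y,x)$, while the ${}_1M_2$-involution sends $e_{1,1}(x,0)$ to $e_{2,2}(0,x)$, not to $e_{1,1}(y,0)$. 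Likewise the coordinate projections $(x,y)\mapsto(x,0)$ and $(x,y)\mapsto(0,y)$ fail to be $*$-maps. Hence the sum you invoke does not exist in $\{\ahas\}$ or in $kk^h$, and the informal conclusion ``$(x,0)+(0,y)=(x,y)$'' has no meaning there.

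The paper closes this gap by a direct unitary conjugation: taking $\lambda=(0,1)\in\indu(\tilde B)$ (which satisfies $\lambda+\lambda^*=1$), it exhibits the unitary
\[
u=\begin{bmatrix}(1,-1)&(1,1)\\(0,0)&(-1,1)\end{bmatrix}\in{}_1M_2\,\indu(\tilde B)
\]
with $\ad(u)\circ\iota_{p_\lambda}$ equal to the map $(x,y)\mapsto\diag((x,0),(0,y))$. Then Lemma~\ref{lem:hermstab2} (via Lemma~\ref{lem:conju1}) gives $\can(\iota_{p_\lambda})=\can(\text{this map})$, and the zigzag follows. You should replace your sum argument with this conjugation, or with any other argument that stays inside $*$-homomorphisms.
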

\begin{proof}
The maps \eqref{map:conunit} satisfy the unit and conunit conditions at the algebra level, and therefore also at the $kk$-level; this proves the first adjunction. To prove the second, observe that $\res:\ahas\to\aha$ has a left adjoint $\indu':\aha\to\ahas$, which maps an algebra $B$ to $\indu'(B)=B\coprod B^{\op}$ equipped with the the involution that interchanges the summands. There is natural projection map $\pi_B:\indu'(B)\to \indu(B)$; the same argument as in the proof of Proposition \ref{prop:coprod} shows that $j^h(\pi_B)$ is an isomorphism. This completes the proof.
\end{proof}

\begin{rem} By the proof of Proposition \ref{prop:adjunction}, the unit and counit maps of the second adjunction of the proposition are obtained from those of the adjunction between $\indu'$ and $\res$ using the projection $\pi:\indu'\to \indu$ and the map $\diag:\indu\to M_2\indu'$ of the proof of Proposition \ref{prop:coprod}. Explicitly, for $B\in\aha$ and $A\in \ahas$, let  
\begin{gather}\label{map:psib}
\psi_B:B\to B\oplus B^{\op},\,\,b\mapsto (b,0),\\
\phi_A:A\oplus A^{\op}\to {}_1M_2A,\quad \phi_A(x,y)=\diag(x,y^*).\nonumber
\end{gather}
The unit map is $j(\psi_B)$ and the counit is the composite of $j^h(\phi_A)$ with the image in $kk^h$ of the isomorphism $\can({}_1M_2A)\cong \can(A)$ of Lemma \ref{lem:hermstab2}. 
\end{rem}

Let $\Lambda=\ell\oplus\ell$ equipped with involution
\[
(\lambda,\mu)^*=(\mu^*,\lambda^*).
\]
If $A\in\ahas$ we shall identify $\indu(\res(A))$ with $\Lambda A=\Lambda\otimesl A$ via the isomorphism
\[
\Lambda A\to \indu (\res(A)),\quad (x,y)\mapsto (x,y^*).
\]
Under this identification, the maps $\eta_A$ of \eqref{map:conunit} and $\phi_A$ of \eqref{map:psib} become the scalar extensions of the diagonal embeddings
\begin{gather}
\eta:\ell\to\Lambda,\quad \eta(x)=(x,x)\label{eta},\\
\phi:\Lambda\to {}_1M_2 \quad \phi(x,y)=E_{1,1}x+E_{2,2}y\label{phi}.
\end{gather}

\begin{rem}\label{rem:lambdautoadjoint}
The functor induced by tensoring with $\Lambda$ is left and right adjoint to itself since $\Lambda \cong \indu \res \ell$.
Also, Proposition \ref{prop:adjunction} shows that $kk^h(\cdot, \Lambda(\cdot))=kk(\res(\cdot),\res(\cdot))$. In other words, $\Lambda$ represents $kk$. Note in particular, that
\[
{}_\epsilon kk^h(\cdot,\Lambda(\cdot))=kk^h(\cdot,\Lambda(\cdot))
\]
for any unitary $\epsilon\in \ell$. Moreover by Example \ref{ex:phinv}, if $R\in\ahas$ is unital, $\epsilon\in R$
a central unitary and $\Psi\in R$ an invertible $\epsilon$-hermitian element, then 
\begin{equation}\label{map:lambda-epsilon}
\ad{(1,\Psi^{-1})}:\Lambda R \to \Lambda R^\psi 
\end{equation}
is an isomorphism in $\ahas$. In particular, we have $*$-isomorphisms
\begin{equation}\label{map:lambda-matrix}
\Lambda M_\pm \cong \Lambda ({}_\epsilon M_2) \cong \Lambda M_2.
\end{equation}
 Let 
\begin{equation}\label{eq:twist}
t:\Lambda \to \Lambda,\,\,
    t(x,y) = (y,x).
\end{equation}
Then $t$ is a $*$-homomorphism, $t^2 = \id_\Lambda$, and one checks that the following diagram commutes
\[
\xymatrix{\Lambda\ar[drr]_(.4){E_{1,1}\id+E_{2,2}t\,\,\,}\ar[rr]^{({}_1M_2\eta)\phi}&& {}_1M_2\Lambda\ar[d]^{\cong}\\
&& M_2\Lambda}
\]
Hence
\begin{equation}\label{eq:1+t}
j^h(\iota_1)^{-1}j^h(\eta)j^h(\phi)=\id_{j^h(\Lambda)}+j^h(t).
\end{equation}
\end{rem}

\section{ The functors \texorpdfstring{$U$}{U} and \texorpdfstring{$V$}{V}}\label{sec:uv}

In \ref{diag:fiber} we have defined the homotopy fiber $P_f$ of any $*$-homomorphism $f$. Now consider the homotopy fibers of the maps \eqref{eta} and \eqref{phi}, 
\[
V=P_{\eta},\quad U=P_\phi.
\]
For $A\in\ahas$, put $UA = U\otimesl A$ and $VA = V \otimesl A$; these are, respectively, the homotopy fibers of $\phi\otimes id_A:\Lambda A \to {}_1M_2A$ and $\eta\otimes id_A: A \to \Lambda A$. Because $U$ and $V$ are flat $\ell$-modules, they define functors $U,V:kk^h \to kk^h$, by Example \ref{ex:tenso}.

\begin{rem}\label{rem:uukar}
In Example \ref{ex:cones} we have defined, for every $*$-homomorphism $f$, a $*$-algebra $\Gamma_f$, and shown that $j^h(\Gamma_f)=j^h(P_f)[-1]$. In his paper \cite{karfund}, Karoubi uses, in the case $\ell=\Z$, the letters $U$ and $V$ for $U':=\Gamma_\phi$ and $V':=\Gamma_\eta$. Thus $j^h(U)[-1]=j^h(U'\ell)$ and $j^h(V)[-1]=j^h(V'\ell)$.  Karoubi shows that for every $*$-ring $R$, there are weak equivalences $\Omega K(\Lambda U'R)\weq K^h(\Lambda R)$, $K^h(V'\Lambda R)\weq K^h(\Lambda R)$ and $\Omega K^h(U'V'R)\weq K^h(R)$ (see \cite{karfund}*{Sections 1.3 and 1.4}). Lemmas \ref{lem:ul-vlo} and \ref{lem:sigmavu-l} recast the latter equivalences into the framework of $kk^h$.  
\end{rem}    

\begin{lem}\label{lem:ul-vlo}
There are isomorphisms
\begin{align*}
    j^h(U\Lambda) &\cong j^h(\Lambda) \text{ and}\\
    j^h(V\Lambda) &\cong j^h(\Omega\Lambda).
\end{align*}
\end{lem}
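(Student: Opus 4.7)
The plan is to apply the triangulated endofunctor $\Lambda\otimes-:kk^h\to kk^h$, which exists by Example \ref{ex:tenso} because $\Lambda=\ell\oplus\ell$ is a flat $\ell$-module, to the distinguished triangles in $kk^h$ arising from the homotopy fibration definitions $U=P_\phi$ and $V=P_\eta$. Using the $M_X$-stability isomorphism $j^h({}_1M_2)\cong j^h(\ell)$, this produces distinguished triangles
\begin{align*}
j^h(U\Lambda)\to j^h(\Lambda\Lambda)\xrightarrow{\phi\otimes\Lambda} j^h(\Lambda) &\to j^h(U\Lambda)[-1],\\
j^h(V\Lambda)\to j^h(\Lambda)\xrightarrow{\eta\otimes\Lambda} j^h(\Lambda\Lambda) &\to j^h(V\Lambda)[-1].
\end{align*}
The key structural input is that $\Lambda\otimes\Lambda$ decomposes in $\ahas$ as a direct sum of $*$-subalgebras $L_1\oplus L_2$, where $L_1$ is spanned by $(1,0)\otimes(1,0)$ and $(0,1)\otimes(0,1)$ and $L_2$ by $(1,0)\otimes(0,1)$ and $(0,1)\otimes(1,0)$; each $L_i$ is $*$-isomorphic to $\Lambda$, so by split-exactness $j^h(\Lambda\Lambda)\cong j^h(\Lambda)\oplus j^h(\Lambda)$.

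To split the first triangle I will invoke the triangle identity $\beta_\Lambda\circ\indu(\alpha_\ell)=\id_\Lambda$ for the $\indu\dashv\res$ adjunction, which is established in the proof of Proposition \ref{prop:adjunction}. A direct computation identifies $\indu(\alpha_\ell)$ as the $*$-homomorphism inclusion $\Lambda\xrightarrow{\cong}L_1\hookrightarrow\Lambda\Lambda$, so the triangle splits as $j^h(\Lambda\Lambda)\cong j^h(L_1)\oplus j^h(U\Lambda)$. Comparing this with the algebraic decomposition $j^h(L_1)\oplus j^h(L_2)$, in which the common summand $j^h(L_1)$ is embedded via the very same $*$-homomorphism, yields $j^h(U\Lambda)\cong j^h(L_2)\cong j^h(\Lambda)$. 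Dually, the triangle identity $\indu(pr_1)\circ\eta_\Lambda=\id_\Lambda$ of the $\res\dashv\indu$ adjunction provides a $*$-homomorphism retraction of $\eta_\Lambda$, which an analogous computation shows to be the projection $L_1\oplus L_2\to L_1$. The second triangle then splits as $j^h(\Lambda\Lambda)\cong j^h(\Lambda)\oplus j^h(V\Lambda)[-1]$, and the same summand-matching argument gives $j^h(V\Lambda)[-1]\cong j^h(L_2)\cong j^h(\Lambda)$, i.e., $j^h(V\Lambda)\cong j^h(\Omega\Lambda)$.

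The main obstacle, largely bookkeeping, will be to verify that in $kk^h$ the adjunction-theoretic morphisms $\beta_\Lambda$, $\indu(\alpha_\ell)$ and $\indu(pr_1)$ genuinely agree with the concrete maps $\phi\otimes\Lambda$ followed by the $M_X$-stability identification, the inclusion $L_1\hookrightarrow\Lambda\Lambda$, and the projection $L_1\oplus L_2\to L_1$ respectively. This is an unwinding of the construction of $\beta_A$ in Proposition \ref{prop:adjunction} — defined as $\psi_A$ composed with the stability isomorphism of Lemma \ref{lem:hermstab2} — together with the explicit identification $\Lambda B\cong\indu\res B$, $(x,y)\mapsto(x,y^*)$, recalled at the start of Section \ref{sec:indres}. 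Once these identifications are in place, the splittings and the match of $j^h(L_2)$ with the complementary summand follow formally.
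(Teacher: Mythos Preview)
Your argument is correct and rests on the same structural observation as the paper: the $*$-algebra decomposition $\Lambda\otimes\Lambda\cong L_1\oplus L_2$ with each $L_i\cong\Lambda$. The paper records this decomposition via the explicit isomorphism
\[
\tau:\Lambda^2\to\Lambda\oplus\Lambda,\qquad (x_1,x_2)\otimes(x_3,x_4)\mapsto(x_1x_3,x_2x_4,x_1x_4,x_2x_3),
\]
which is exactly your $L_1\oplus L_2$. The difference is in how the two proofs exploit it. The paper computes $(\phi\otimes\id_\Lambda)\circ\tau^{-1}$ on \emph{each} summand, using Lemma~\ref{lem:hermstab2} to see that both restrictions agree with the corner inclusion, hence the map is the fold $\pi_1+\pi_2$ after the stability identification; it then compares with the split triangle on $\id\oplus(-\id)$. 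For $V\Lambda$ the paper similarly computes $\tau\circ(\eta\otimes\id_\Lambda)=\id\oplus t$ with $t$ the swap on $\Lambda$, and compares with the split triangle on $\id\oplus t$.

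You instead invoke the triangle identities of the two adjunctions in Proposition~\ref{prop:adjunction} to produce a section of $\beta_\Lambda$ (namely $\indu(\alpha_\ell)$, which is indeed the inclusion of $L_1$) and a retraction of $\eta_\Lambda$ (namely $\indu(pr_1)$, which is indeed the projection onto $L_1$); the complements in the two resulting splittings are then read off as $j^h(L_2)$. Your bookkeeping claims check out: under the identification $\Lambda B\cong\indu\res B$, one finds $\indu(\phi_\ell)(e_i)=e_i\otimes e_i$ and $\indu(pr_1)(e_i\otimes e_j)=\delta_{ij}e_i$. One small caution on phrasing: in the $V\Lambda$ case the image of $\eta_\Lambda$ is the graph of $t$, not $L_1$; your argument actually matches complements via the common \emph{retraction} $\indu(pr_1)$ (whose fiber is $L_2$), not via a common inclusion, so ``the same summand-matching argument'' should be read in that dual sense.

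What each approach buys: yours is slightly more conceptual and avoids computing the map on the second summand or introducing $t$; the paper's explicit identification of $t$ is not wasted, however, since the twist reappears in Lemma~\ref{lem:commutes} and in the $12$-term sequence of Section~\ref{sec:12}.
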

\begin{proof} 
Let us prove the first equivalence. To ease the notation we ommit the functor $j^h$.
Let
\[ U\Lambda \to \Lambda^2 \xrightarrow{\phi \otimes \id_\Lambda} {}_1M_2\Lambda \]
be the triangle in $kk^h$ defining $U\Lambda$. We have an isomorphism %\edit{labelled}
\begin{align}\label{map:tau}
 \tau:\Lambda^2 &\iso\Lambda \oplus \Lambda\\
 (x_1,x_2) \otimes (x_3,x_4) &\mapsto (x_1 x_3,x_2 x_4, x_1 x_4,x_2 x_3).\nonumber
\end{align}
Put $\lambda_1= (0,1)$, $\lambda_2=(1,0)$  and $\iota_i:\Lambda \to {}_1M_2\Lambda$ as in Lemma \ref{lem:hermstab2}. Let $\jmath_i$ $(i=1,2)$ be as in Remark \ref{rem:sumtria}. Observe that 
\[
((\phi\otimes \id_{\Lambda})\circ\tau^{-1}\circ \jmath_1)(x,y)=\begin{bmatrix} (x,0)& (0,0)\\
(0,0)& (0,y) 
\end{bmatrix}.
\]
The matrix 
\begin{equation}\label{conj-incl}
u=\left[\begin{matrix}
(1,-1)& (1,1)\\
(0,0)& (-1,1)
\end{matrix}\right]\in {}_1M_2(\indu(\tilde{B}))
\end{equation}
is unitary and satisfies
\[ \ad(u)\circ\iota_1= (\phi \otimes id_\Lambda)\circ \tau^{-1}\circ \jmath_1: \Lambda \to {}_1M_2\Lambda.\]
So by Lemma \ref{lem:conju1}, the following diagram commutes in $kk^h$
\begin{equation}\label{comm-fold}
\begin{tikzcd}
  \Lambda \ar[r,"\jmath_1"]
          \ar[dr,"\iota_1"', "\sim" {rotate=-30}]
 &\Lambda\oplus\Lambda \ar[d,"(\phi\otimes id_\Lambda)\circ\tau^{-1}"] \\
 &{}_1M_2\Lambda.
\end{tikzcd}
\end{equation}
Similarly, the diagram 
\begin{equation}\label{comm-fold2}
\begin{tikzcd}
  \Lambda \ar[r,"\jmath_2"]
          \ar[dr,"\iota_2"', "\sim" {rotate=-30}]
 &\Lambda\oplus\Lambda \ar[d,"(\phi\otimes id_\Lambda)\circ\tau^{-1}"] \\
 &{}_1M_2\Lambda
\end{tikzcd}
\end{equation}
commutes in $kk^h$. Let $\pi_i$ $(i=1,2)$ be as in Remark \ref{rem:sumtria}. Because diagrams \eqref{comm-fold} and \eqref{comm-fold2} commute and because by 
Lemma \ref{lem:hermstab2} we have $j^h(\iota_1)=j^h(\iota_2)$, the following solid arrow diagram commutes in $kk^h$
\[
\begin{tikzcd}[column sep = 2cm]
    U\Lambda \ar[r]
 &\Lambda^2 \ar[r, "\phi\otimes \id_\Lambda"]
 &  {}_1M_2\Lambda  \\
    \Lambda \ar[r, "\jmath_1-\jmath_2"] \ar[u, dashed]
 &  \Lambda\oplus\Lambda \ar[r, "\pi_1 + \pi_2"] 
                           \ar[u, "\tau^{-1}", "\sim"' {anchor=north, rotate=90}]
 &  \Lambda. \ar[u, "\iota_1"]
\end{tikzcd}
\]
Because the middle and right vertical arrows are isomorphisms in $kk^h$, we get that the dashed map is an isomorphism in $kk^h$. 
Next we prove the second isomorphism of the Lemma. Let 
\[ V\Lambda \to \Lambda \xrightarrow{\eta \otimes \id_\Lambda} \Lambda^2 \]
be the triangle in $kk^h$ defining $V\Lambda$. Let $t$ be as in \eqref{eq:twist}; one checks that the following square commutes
\begin{equation}\label{diag:ttau}
\begin{tikzcd}
    \Lambda \ar[r,"\eta \otimes \id_\Lambda"] \ar[d,equal] &\Lambda^2 \ar[d,"\tau"] \\
    \Lambda \ar[r,"\jmath_1+\jmath_2 t"] &\Lambda \oplus \Lambda.
\end{tikzcd}
\end{equation}
The map $\jmath_1+\jmath_2 t$ completes to a split distinguished triangle in $kk^h$
\[ \Lambda \xrightarrow{\jmath_1+ \jmath_2t} \Lambda \oplus \Lambda \xrightarrow{\pi_1 - t\pi_2}  \Lambda. \]
Rotating the split triangle above we get the triangle
\[ \Omega\Lambda \xrightarrow{0} \Lambda \xrightarrow{\jmath_1+\jmath_2 t} \Lambda \oplus\Lambda. \]
Next \eqref{diag:ttau} extends to a map of triangles in $kk^h$
\[
\begin{tikzcd}
    V\Lambda \ar[r] \ar[d, dashed]
 &\Lambda \ar[r, "\eta\otimes \id_\Lambda"] \ar[d,equal]
 &  \Lambda^2  \ar[d,"\tau"]\\
    \Omega\Lambda \ar[r,"0"]
 &  \Lambda \ar[r, "\jmath_1+\jmath_2 	t"]
 &  \Lambda.
\end{tikzcd}
\]
It follows that the dashed map is an isomorphism.
\end{proof}

\begin{lem}\label{lem:sigmavu-l}
There is an isomorphism
\[ j^h(\Sigma VU) \cong j^h(\ell).\]
In particular, $j^h(VU) \cong j^h(\Omega)$.
\end{lem}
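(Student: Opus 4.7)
The plan is to tensor the triangle defining $V$ against $U$, then use Lemma \ref{lem:ul-vlo} to recast the resulting triangle so that its middle map becomes the canonical map $\iota:U\to\Lambda$ from the triangle defining $U$; uniqueness of the third vertex in a triangulated category will then force $UV\cong\Omega$, which is essentially what is wanted. First I would dispose of the bookkeeping: the two stated isomorphisms are equivalent, since $j^h(\Sigma VU)\cong j^h(\ell)$ desuspends to $j^h(VU)\cong j^h(\Omega\ell)=j^h(\Omega)$. So I focus on proving $j^h(UV)\cong j^h(\Omega)$.

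Because $U$ is $\ell$-flat, Example \ref{ex:tenso} makes $U\otimesl-$ a triangulated endofunctor of $kk^h$. Applying it to the defining triangle $V\to \ell\xrightarrow{\eta}\Lambda\to V[-1]$ of $V$ yields a distinguished triangle in $kk^h$
\[
UV\to U\xrightarrow{\id_U\otimes\eta} U\Lambda\to UV[-1].
\]
Lemma \ref{lem:ul-vlo} provides an isomorphism $\alpha:j^h(U\Lambda)\xrightarrow{\sim}j^h(\Lambda)$. The crucial computation will be that, modulo this isomorphism, the composite $\alpha\circ(\id_U\otimes\eta):U\to\Lambda$ agrees with the canonical map $\iota:U\to\Lambda$ coming from the defining triangle of $U$.

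The main obstacle is exactly this identification, and here is how I would address it. Inspecting the proof of Lemma \ref{lem:ul-vlo}, the inverse of $\alpha$ is the dashed map $\delta:\Lambda\to U\Lambda$ characterized in $kk^h$ by the relation $(\iota\otimes\id_\Lambda)\circ\delta=\tau^{-1}\circ(\id\oplus-\id)$. I claim that $\alpha$ is also represented, in $kk^h$, by the honest $*$-homomorphism $\mu_U:=\mu\circ(\iota\otimes\id_\Lambda):U\Lambda\to\Lambda^2\to\Lambda$, where $\mu$ denotes the multiplication of $\Lambda$. To verify this one computes $\mu\circ\tau^{-1}\circ(\id\oplus-\id)=\id_\Lambda$ directly on $\Lambda=\ell\oplus\ell$ (a short calculation with the explicit formula for $\tau$ from Lemma \ref{lem:ul-vlo}); this shows $\mu_U\circ\delta=\id_\Lambda$ in $kk^h$, so $\mu_U$ is a left (hence two-sided) inverse of $\delta$, and thus represents $\alpha$. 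Evaluating in $\ahas$,
\[
\alpha\circ(\id_U\otimes\eta)(u)=\mu_U(u\otimes(1,1))=\iota(u)\cdot(1,1)=\iota(u),
\]
since $(1,1)$ is the algebra unit of $\Lambda$. Hence $\alpha\circ(\id_U\otimes\eta)=\iota$ in $\ahas$, and a fortiori in $kk^h$.

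With this identification in hand, the triangle above becomes, in $kk^h$, $UV\to U\xrightarrow{\iota}\Lambda\to UV[-1]$. Rotating the defining triangle of $U$ and invoking the hermitian-stability isomorphism ${}_1M_2\cong \ell$ in $kk^h$ from Remark \ref{rem:epsilon1}, we obtain a second distinguished triangle $\Omega\to U\xrightarrow{\iota}\Lambda\to \ell$ with the same middle arrow. By uniqueness (up to noncanonical isomorphism) of the third vertex of a distinguished triangle on a fixed morphism, $UV\cong\Omega$ in $kk^h$. Symmetry of the tensor product gives $VU\cong UV$, and applying the suspension yields $j^h(\Sigma VU)\cong j^h(\Sigma\Omega)=j^h(\ell)$, as required.
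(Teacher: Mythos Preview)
Your argument is correct and follows the same strategy as the paper: tensor the defining triangle of $V$ with $U$, identify $U\Lambda\cong\Lambda$ via Lemma~\ref{lem:ul-vlo}, and compare with the defining triangle of $U$ to conclude. Your explicit identification of the isomorphism $\alpha$ with the multiplication map (observe that $\pi_1\circ\tau=\mu$, so this is exactly the description the paper gives) makes transparent the key commutativity $\alpha\circ(\id_U\otimes\eta)=\iota$ that the paper's proof asserts more tersely; one minor wording point is that $\delta$ is not uniquely \emph{characterized} by the relation $(\iota\otimes\id_\Lambda)\circ\delta=\tau^{-1}\circ(\id\oplus-\id)$, but it does satisfy it as part of a morphism of triangles, which is all you actually use.
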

\begin{proof}
As before, we omit $j^h$ from the notation. In view of Lemma \ref{lem:hermstab2}%\edit{\ref{lem:hermstab2}} %\eqref{eq:pm=12} and because $\iota_+:\ell\to M_\pm$ is an isomorphism in $kk^h$,
it suffices to show that $\Sigma VU$ is $kk^h$-isomorphic to ${}_1M_2$.  Let
\[ VU \to U \xrightarrow{\eta \otimes id_U} \Lambda U \]
be the triangle in $kk^h$ that defines $VU$. The $kk^h$ isomorphism between $\Lambda U = U\Lambda$ and $\Lambda$ established in  Lemma \ref{lem:ul-vlo} is induced by mapping $\Lambda^2$ to $\Lambda \oplus \Lambda$ and then retracting onto the first coordinate. Using this fact we get that there is a map of triangles in $kk^h$
\[
\begin{tikzcd}
   U \ar[r, "\eta \otimes id_U"] \ar[d,equal] 
  &\Lambda U \ar[r] \ar[d,"\sim" {anchor=south, rotate=90}] 
  &\Sigma VU \ar[d,dashed] \\
  U \ar[r]
  &\Lambda \ar[r, "\phi"]
  & {}_1M_2.
\end{tikzcd}
\]
It follows that the dashed $kk^h$-map is an isomorphism. 

\end{proof}

\begin{rem}\label{rem:tenso}
By Example \ref{ex:tenso}, the isomorphisms of Lemmas \ref{lem:ul-vlo} and \ref{lem:sigmavu-l} induce isomorphisms $j^h(U\Lambda A)\cong j^h(\Lambda A)$, $j^h(V\Lambda A)\cong j^h(\Omega\Lambda A)$ and $j^h(VUA)=j^h(A)[1]$ for every $A\in\ahas$. %\edit{Cita Ex,\\ no Lema}
\end{rem}

\section{Karoubi's Fundamental Theorem}\label{sec:fund}
\comment{In \cite{karfund} Karoubi defines similar functors $U$ and $V$. The main difference is that between his and ours is that Karoubi's versions of such functors are defined by $U_{Kar} := \Gamma_\phi$ and $V_{Kar} := \Gamma_\eta$, that is, the pullbacks of $\Sigma \phi$ and $\Sigma\eta$ like in example \ref{ex:cones}.}

\begin{thm}[Karoubi, \cite{karfund}]\label{thm:fund}
Assume that $\ell$ satisfies the $\lambda$-assumption \ref{stan:lambda}. There is an element $\theta_0 \in {}_{-1}K_2^h(U'^2)$ such that 

\item[i)] The composite ${}_{-1}K_2^h((U')^2)\to {}_{-1}K_2^h(\Sigma \inv(U'))\cong {}_{-1}K_1^h(\inv(U'))\cong K_1(U')\cong K_0(\Z)=\Z$ maps $\theta_0$ to $1$.
\item[ii)] For every unital $*$-$\ell$-algebra $R$, the product with $\theta_0$ induces an isomorphism%\edit{flecha iso}
\[
\theta_0\star-:K_*^h(R) \iso K^h_{*+2}({}_{-1}M_2(U')^2R)
\]
\end{thm}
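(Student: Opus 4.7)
The statement is Karoubi's theorem from \cite{karfund}, so the plan is to recall his argument and repackage the conclusion into the cup-product form presented here. Both the construction of $\theta_0$ and the isomorphism in (ii) will come from iterating Karoubi's fundamental equivalence \eqref{intro:echteskar}, namely ${}_\epsilon \sV(R) \weq \Omega{}_{-\epsilon}\sU(R)$.

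\medskip

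I would construct $\theta_0$ by reading the composite described in (i) backwards. Starting with $[1] \in K_0(\Z) = \Z$, pass to $K_1(U')$ via the boundary map of the fundamental fibration underlying $U'$ (cf.\ Remark \ref{rem:uukar}), then to ${}_{-1}K_1^h(\inv(U'))$ via the classical identification of ordinary $K$-theory with hermitian $K$-theory of the $\inv$ construction, then to ${}_{-1}K_2^h(\Sigma\inv(U'))$ by the suspension isomorphism, and finally push forward to ${}_{-1}K_2^h((U')^2)$ via the natural $*$-homomorphism $\Sigma\inv(U') \to (U')^2$. Call the resulting element $\theta_0$; part (i) is then tautological, since the composite in (i) inverts each of these steps in turn.

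\medskip

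For part (ii), the crucial input is that Karoubi's equivalence \eqref{intro:echteskar} is natural in $R$ and multiplicative with respect to tensor products. Applied to a unital $*$-algebra $R$, and then again with a sign flip, it yields a natural chain of weak equivalences
\[
K^h(R) \simeq \Omega K^h({}_1M_2 V'R) \simeq \Omega^2 K^h({}_{-1}M_2(U')^2 R).
\]
Multiplicativity ensures that the induced isomorphism on $\pi_*$ factors as cup product with a fixed universal class in ${}_{-1}K_2^h((U')^2)$; naturality together with the construction above identifies that class with $\theta_0$. The cup-product compatibility with boundary maps from Lemmas \ref{lem:prodindex} and \ref{lem:prod-sigmaomega} makes the identification rigorous.

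\medskip

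The main obstacle is establishing the multiplicativity of Karoubi's equivalence at the spectrum level, rather than merely of $K$-groups. This is the technical heart of \cite{karfund}: one has to trace through the construction of the equivalence on hermitian-form exact sequences and check compatibility with tensor products of $*$-algebras. Once this is in place, both (i) and (ii) follow from the iterated application of \eqref{intro:echteskar} and the naturality of $\star$.
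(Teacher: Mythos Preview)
The paper does not attempt to reprove this statement; its proof is a citation. It identifies $\theta_0$ with Karoubi's element $\sigma$ from \cite{karfund}*{Section 3.1}, and then observes that the cup-product formulation here is equivalent to what Karoubi actually proves in \cite{karfund}*{Section 3.5}, namely that $\sigma\star-:K^h_*(V'R)\iso {}_{-1}K^h_{*+1}(U'R)$, once one combines it with the equivalence $\Omega K^h(U'V'R)\weq K^h(R)$ recalled in Remark \ref{rem:uukar}. Your proposal instead tries to sketch a direct argument, and this sketch has real gaps.

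First, your construction of $\theta_0$ does not make part (i) tautological. You build $\theta_0$ by pushing forward along a $*$-homomorphism $\Sigma\inv(U')\to (U')^2$, but the first arrow in (i) goes the other way, ${}_{-1}K_2^h((U')^2)\to {}_{-1}K_2^h(\Sigma\inv(U'))$. For (i) to be automatic you would need that map to be a retraction of your push-forward, and you have not argued this; in fact neither the existence of the $*$-homomorphism you invoke nor the retraction property is evident. In Karoubi's paper $\sigma$ is given explicitly, and (i) is checked rather than built in.

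Second, the chain of equivalences you write for (ii),
\[
K^h(R)\simeq \Omega K^h({}_1M_2 V'R)\simeq \Omega^2 K^h({}_{-1}M_2(U')^2R),
\]
is not what one gets from iterating \eqref{intro:echteskar}. That equivalence relates the \emph{fibres} ${}_\epsilon\sV(R)$ and ${}_{-\epsilon}\sU(R)$, not $K^h(R)$ and $K^h(V'R)$ directly; the passage from the fibre statement to a statement about $K^h$ of $U'R$, $V'R$ requires exactly the identifications in Remark \ref{rem:uukar} and is where the equivalence $\Omega K^h(U'V'R)\weq K^h(R)$ enters. Moreover, the assertion that the resulting isomorphism ``factors as cup product with a fixed universal class'' is precisely the content of \cite{karfund}*{Section 3.5} and cannot be deduced from multiplicativity alone; invoking Lemmas \ref{lem:prodindex} and \ref{lem:prod-sigmaomega} does not help here, since those concern compatibility of an already-defined product with boundary maps, not the identification of an abstract natural isomorphism with a cup product.
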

\begin{proof} By Lemma \ref{lem:kepsilon}, for any unital $*$-ring $S$, we have ${}_{-1}K^h_*(S)=K^h_*({}_{-1}M_2S)$. The element $\theta_0$ of the present theorem appears under the name of $\sigma$ in the first line of \cite{karfund}*{Section 3.1}. Thus as mentioned by Karoubi in \cite{karfund}*{line -3 of page 263 through line 3 of page 264}, the current theorem is just another way of phrasing Karoubi's fundamental theorem. Indeed, as said in Remark \ref{rem:uukar}, Karoubi showed that $\Omega K^h(V'U'R)\weq K^h(R)$,
so the theorem as stated here is equivalent to that proved in \cite{karfund}*{Section 3.5}, which says that cup-product with $\theta_0$ induces an isomorphism $K^h_*(V'R)\iso {}_{-1}K^h_{*+1}(U'R)$.
\end{proof}

\begin{rem}\label{rem:clau}
Max Karoubi pointed out to us that his proof of the fundamental Theorem in \cite{karfund} implicitly uses Clauwens' calculation in \cite{clau} that the ring $\ell=\Z[x]$ equipped with the involution $x^*=1-x$ has Witt group $W(\ell)=\Z$. 
\end{rem}

We shall use Theorem \ref{thm:fund} to prove the following.

\begin{thm}\label{thm:bivafund}
Assume that $\ell$ satisfies the $\lambda$-assumption \ref{stan:lambda}. Then the image $\theta$ of  $c_2(\theta_0)$ under the isomophism $KH_2^h({}_{-1}M_2(U'\ell)^2)\cong KH_0^h({}_{-1}M_2U^2)=kk^h(\ell,{}_{-1}M_2U^2)$ induces a natural isomorphism
\[
\theta_A:=\theta\otimes\id_{j^h(A)}:j^h(A)\iso j^h({}_{-1}M_2U^2A)\,\,\forall A\in\ahas.
\]
\end{thm}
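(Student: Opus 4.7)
The plan is to deduce Theorem \ref{thm:bivafund} from Karoubi's Theorem \ref{thm:fund} in three moves: (i) translate Karoubi's result from $K^h$ to $KH^h$ via the comparison map, (ii) replace $U'$ by $U$ using Remark \ref{rem:uukar}, and (iii) upgrade the resulting $KH^h$-isomorphism to an isomorphism in $kk^h$ by constructing an explicit inverse for $\theta$, exploiting the invertibility of $U$ and of ${}_{-1}M_2$ inside $kk^h$.

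First I will translate Karoubi's isomorphism $\theta_0\star- : K^h_*(R) \iso K^h_{*+2}({}_{-1}M_2(U')^2 R)$ (valid for unital $R$) into an isomorphism of $KH^h$-groups. Karoubi's iso applies termwise to each of the $K^h$-groups appearing in the colimit formulas \eqref{defi:khh} and \eqref{eq:khykv}; the compatibility of the cup product with the comparison map $c:K^h\to KH^h$ (Lemma \ref{lem:prodkhh}) and with the loop/suspension boundaries (Corollary \ref{coro:prod-sigmaomega}), together with excision (Lemma \ref{lem:khprops}), let me pass to the colimit and to nonunital $A$, yielding a natural isomorphism
\[ c_2(\theta_0)\star - \,:\, KH^h_*(A) \iso KH^h_{*+2}({}_{-1}M_2(U')^2A),\quad A\in\ahas. \]
Using Remark \ref{rem:uukar} to identify $j^h({}_{-1}M_2(U')^2A) \cong j^h({}_{-1}M_2U^2A)[-2]$ and $c_2(\theta_0)$ with $\theta$, this becomes $\theta\star- : KH^h_*(A) \iso KH^h_*({}_{-1}M_2U^2A)$, natural in $A$.

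To upgrade this into $\theta_A$ being an iso in $kk^h$, I use Proposition \ref{prop:agree} and Lemma \ref{lem:prodagree} to identify $\theta\star-$ on $KH^h_*(A)=kk^h_*(\ell,A)$ with post-composition by $\theta_A=\theta\otimes\id_{j^h(A)}$. Functoriality of $\otimes$ (Lemma \ref{lem:tenso}) reduces the theorem to showing that $\theta$ itself is an iso in $kk^h$. To exhibit an inverse I observe that ${}_{-1}M_2U^2$ is invertible in $kk^h$: by Lemma \ref{lem:sigmavu-l}, $U$ has inverse $j^h(V)[-1]$; by Proposition \ref{prop:hermistab} combined with matricial stability, $({}_{-1}M_2)^{\otimes 2}=M_4^{h_{-1}\otimes h_{-1}}$, equipped with the $1$-hermitian involution induced by $h_{-1}\otimes h_{-1}$, is $kk^h$-isomorphic to $\ell$, so ${}_{-1}M_2$ is self-inverse. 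Hence ${}_{-1}M_2U^2$ has $kk^h$-inverse ${}_{-1}M_2V^2[-2]$, and duality for invertible objects gives
\[ kk^h({}_{-1}M_2U^2,\,\ell) \,\cong\, kk^h(\ell,\, {}_{-1}M_2V^2[-2]) \,=\, {}_{-1}KH^h_{-2}(V^2). \]
Applying the iso $\theta\star-$ with $A={}_{-1}M_2V^2$ in degree $-2$, and using $({}_{-1}M_2)^{\otimes 2}(UV)^{\otimes 2}[-2]\cong \ell$, produces a preferred preimage $\sigma \in kk^h({}_{-1}M_2U^2,\ell)$ of $\id_\ell\in kk^h(\ell,\ell)$. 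The defining identity $\theta\otimes\tilde\sigma=\id_\ell$ (for $\tilde\sigma\in kk^h(\ell,{}_{-1}M_2V^2[-2])$ corresponding to $\sigma$ under the duality), combined with the interchange law for $\otimes$ and the evaluation map of ${}_{-1}M_2U^2$, translates into $\sigma\circ\theta=\id_\ell$; symmetry of $\otimes$ yields $\theta\circ\sigma=\id_{{}_{-1}M_2U^2}$. Hence $\theta$ is iso in $kk^h$ and so is $\theta_A$ for every $A$.

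The hardest step is the last. Two sub-points are delicate: establishing that ${}_{-1}M_2$ is invertible in $kk^h$ requires a direct application of hermitian stability (Proposition \ref{prop:hermistab}), since $-1$-hermitian involutions fall outside the $\iota_+$-type reductions used for $M_\pm$; and verifying that $\sigma$ is a two-sided inverse of $\theta$ requires carefully unwinding the compatibility between the cup product $\star$ and composition $\circ$ through the evaluation/coevaluation maps of the invertible object ${}_{-1}M_2U^2$ in the symmetric monoidal triangulated category $kk^h$.
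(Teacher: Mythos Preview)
Your strategy is the same as the paper's: pass from Karoubi's $K^h$-isomorphism to a $KH^h$-isomorphism via the comparison map and Lemma \ref{lem:prodkhh}, then use invertibility of ${}_{-1}M_2U^2$ in $kk^h$ (from Lemma \ref{lem:sigmavu-l} and the identification ${}_{-1}M_2\otimesl{}_{-1}M_2\cong M_\pm({}_1M_2)$) to promote this to an isomorphism in $kk^h$. Steps (i) and (ii) match the paper almost verbatim.

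The difference is in how step (iii) is executed. You try to build an explicit inverse $\sigma$ by dualizing along the invertible object ${}_{-1}M_2U^2$ and then invoking ``symmetry of $\otimes$'' to get the second triangle identity $\theta\circ\sigma=\id$. That last move is where your argument is underspecified: once you have $\sigma\circ\theta=\id_\ell$ you still need a reason why $\theta$ is epi, and ``symmetry'' alone does not supply it without further unwinding of the evaluation/coevaluation compatibilities you allude to. The paper sidesteps this entirely with a cleaner Yoneda-type reduction: a map $\theta:\ell\to L$ is an isomorphism as soon as $kk^h(\ell,\theta)$ and $kk^h(L,\theta)$ are bijections; since $L={}_{-1}M_2U^2$ is $\otimes$-invertible, tensoring with $L^{-1}\cong{}_{-1}M_2(\Sigma V)^2$ identifies $kk^h(L,\theta)$ with $kk^h(\ell,\theta_{L^{-1}})$. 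Thus both conditions reduce to the statement that $(\theta_A)_*$ is an isomorphism on $kk^h(\ell,-)$ for suitable $A$, which is exactly the $KH^h$-isomorphism already established. This avoids constructing $\sigma$ and the attendant coherence checks; your approach can be made to work, but when fully justified it unwinds to the paper's argument.
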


\begin{coro}\label{coro:bivafund} Let $\epsilon\in\ell$ be unitary. For every $A\in \ahas$, 
$$j^h({}_{\epsilon}M_2VA)\cong j^h({}_{-\epsilon}M_2UA)[1].$$ 
\end{coro}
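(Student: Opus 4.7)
The plan is to apply Theorem \ref{thm:bivafund} with $A$ replaced by ${}_\epsilon M_2 VA$, obtaining $j^h({}_\epsilon M_2 VA)\cong j^h({}_{-1}M_2\,U^2\,{}_\epsilon M_2\,VA)$, and then simplify the right-hand side using two $kk^h$-identifications already established in the paper.

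For the matrix coefficient ${}_{-1}M_2\otimesl {}_\epsilon M_2$, equation \eqref{eq:rosphi} gives $M_4^{h_{-1}\otimes h_\epsilon}$ with $h_{-1}\otimes h_\epsilon$ a $(-\epsilon)$-hermitian invertible element of $M_4$. A block permutation analogous to the one used to derive \eqref{eq:1mepsi=1m2}, followed by a diagonal conjugation that rescales the $h_{-1}$ sign, produces an $\ell$-linear $*$-isomorphism ${}_{-1}M_2\otimesl {}_\epsilon M_2\cong M_2\otimesl {}_{-\epsilon}M_2$. Hermitian stability (Lemma \ref{lem:hermstab}, with $R=A=M_2$ and $\phi=\psi=h_{-\epsilon}$) then identifies the corner inclusion ${}_{-\epsilon}M_2\hookrightarrow M_2\otimesl {}_{-\epsilon}M_2$ as a $kk^h$-isomorphism, so $j^h({}_{-1}M_2\otimesl {}_\epsilon M_2)\cong j^h({}_{-\epsilon}M_2)$.

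For the $U^2V$ factor, write $U^2V=U\otimesl UV$; Remark \ref{rem:tenso}, obtained from Lemma \ref{lem:sigmavu-l} via Lemma \ref{lem:tenso}, supplies the natural isomorphism $j^h(UVB)\cong j^h(B)[1]$, which applied to $B=UA$ becomes $j^h(U^2 VA)\cong j^h(UA)[1]$. Combining both simplifications through the bilinearity of the $kk^h$-tensor product (Lemma \ref{lem:tenso}) rewrites the image of Theorem \ref{thm:bivafund} to produce the asserted natural isomorphism. The only non-formal step is the first: the $*$-congruence $h_{-1}\otimes h_\epsilon\cong h_{-\epsilon}\oplus h_{-\epsilon}$ expresses the familiar fact that the tensor product of two hyperbolic forms is hyperbolic, and its explicit realization over $\ell$ uses only $\pm 1$, so it goes through uniformly under the $\lambda$-assumption.
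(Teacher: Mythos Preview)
Your argument is correct and uses the same three ingredients as the paper's own proof: Theorem \ref{thm:bivafund}, the isomorphism $j^h(VU)\cong j^h(\Omega)$ from Lemma \ref{lem:sigmavu-l}/Remark \ref{rem:tenso}, and the $*$-identification ${}_{-1}M_2\otimesl {}_\epsilon M_2\cong M_{\pm}\otimesl{}_{-\epsilon}M_2$ (the paper's \eqref{-epsilon}) followed by matricial/hermitian stability to absorb the extra $2\times 2$ factor. The only difference is the order of operations—the paper first establishes the case $\epsilon=1$ and then substitutes ${}_\epsilon M_2A$ for $A$, whereas you apply the theorem directly to ${}_\epsilon M_2VA$ and simplify.
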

\begin{proof} It is immediate from Theorem \ref{thm:bivafund}, Lemma \ref{lem:sigmavu-l} and Remark \ref{rem:tenso} that $j^h(VA)\cong j^h({}_{-1}M_2UA)[1]$. The corollary follows from this applied to ${}_\epsilon M_2A$ using the isomorphism %\edit{puse $()$}
\begin{equation}\label{-epsilon}
{}_{-1}M_2({}_\epsilon M_2)\cong M_{\pm}({}_{-\epsilon}M_2)
\end{equation}
 and hermitian stability. 
\end{proof}

\begin{proof}[Proof of Theorem \ref{thm:bivafund}] By Example \ref{ex:tenso}%\edit{ex no lema} it suffices to show that $\theta=\theta_\ell$ is an isomorphism, or what is the same, that $kk^h(\ell,\theta)$ and $kk^h({}_{-1}M_2U^2,\theta)$ are isomorphisms. Taking into account \eqref{-epsilon}%\edit{\ref{-epsilon}} and hermitian stability and using Lemma \ref{lem:sigmavu-l}, we see that $kk^h({}_{-1}M_2U^2,\theta)$ is an isomorphism if and only if $kk^h(\ell,\theta_{{}_{-1}M_2(\Sigma V)^2})$ is one. Hence the theorem will follow if we prove that  $(\theta_A)_*:=kk^h(\ell,\theta_A)$ is an isomorphism for all $A$. 
By Lemma \ref{lem:prodagree}, under the isomorphism of Proposition \ref{prop:agree}, $(\theta_A)_*$ corresponds to the cup-product with $\theta$. By Lemma \ref{lem:prodkhh} the latter is induced by the product of Corollary \ref{coro:prod-sigmaomega} i) with $\theta_0$, up to a degree shift. By Theorem \ref{thm:fund}, $\theta_0\star-:K^h_m(A)\to K^h_{m+2}({}_{-1}M_2(U')^2A)$ is an isomorphism for all $m\in\Z$ and all unital $A\in\ahas$. Using excision, we obtain that the same is true also for not necessarily unital $A$ if $m\le -2$. It follows that $\theta\star-:KH_*(A)\to KH_*({}_{-1}M_2U^2A)$ is an isomorphism for all $A$, concluding the proof. 
\end{proof}

We finish the section with a lemma that will be used in Section \ref{sec:12}.

\begin{lem}\label{lem:commutes}
Consider the isomorphisms $j^h(U\Lambda)\cong j^h(\Lambda)$ of Lemma \ref{lem:ul-vlo} and $M_2\Lambda\cong {}_{-1}M_2\Lambda$ of \eqref{map:lambda-matrix}. Then the following diagram commutes
\[
\xymatrix{j^h(\Lambda)\ar[r]^(.4){\theta_{\Lambda}}\ar[d]_{\wr}^{\iota}& j^h({}_{-1}M_2U^2\Lambda)\ar[d]^{\wr}\\
           j^h(M_2\Lambda)\ar[r]_(.45){\cong}&j^h({}_{-1}M_2\Lambda)}
\]
%In particular, $\theta_\Lambda$ is an isomorphism. 
\end{lem}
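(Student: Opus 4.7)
My plan is to reduce the commutativity of the square to an equality inside the abelian group $kk^h(\Lambda,{}_{-1}M_2\Lambda)$ and to translate the latter into $kk$ by means of Proposition \ref{prop:adjunction}. Since $\Lambda B=\indu\res B$ for any $B\in\ahas$, Remark \ref{rem:lambdautoadjoint} gives a natural identification
\[
kk^h(\Lambda,{}_{-1}M_2\Lambda)\;\cong\;kk(\res\Lambda,\res({}_{-1}M_2))\;=\;kk(\ell\oplus\ell,M_2),
\]
and additivity plus Morita invariance of $kk$ further identifies the right-hand side with $KH_0(\ell)\oplus KH_0(\ell)$. It is therefore enough to show that the two paths around the square represent the same element of this group.

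First I would handle the left-then-bottom composite. The map $\iota:\Lambda\to M_2\Lambda$ is a $kk^h$-equivalence by Example \ref{ex:cornerid}, and the $*$-isomorphism in \eqref{map:lambda-matrix} is clearly an iso. Unwinding the counit $\psi:\indu\res B\to {}_1M_2B$ of the adjunction (see \eqref{map:psib}) together with \eqref{map:aduiso}, one verifies without difficulty that this composite corresponds to the diagonal corner inclusion $\ell\oplus\ell\to M_2$, $(x,y)\mapsto\diag(x,y)$, whose class in $KH_0(\ell)\oplus KH_0(\ell)$ is $(1,1)$.

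Next I would treat the top-then-right composite. By construction (Theorem \ref{thm:bivafund}) the class $\theta\in kk^h(\ell,{}_{-1}M_2U^2)$ is the image of $c_2(\theta_0)$ under Corollary \ref{coro:agree} and $j^h({}_{-1}M_2(U'\ell)^2)[2]\cong j^h({}_{-1}M_2U^2)$, and tensoring with $\Lambda$ is compatible with cup product by Lemma \ref{lem:prodagree}. The right vertical iso is obtained by applying Lemma \ref{lem:ul-vlo} twice (so that $j^h(U^2\Lambda)\cong j^h(\Lambda)$) and tensoring with $j^h({}_{-1}M_2)$. My claim is that after passing to $kk(\ell\oplus\ell,M_2)$ through the adjunction, this composite is computed precisely by the normalization in Theorem \ref{thm:fund}(i): the chain
\[
{}_{-1}K_2^h((U')^2)\to {}_{-1}K_2^h(\Sigma\inv(U'))\cong {}_{-1}K_1^h(\inv(U'))\cong K_1(U')\cong K_0(\Z)=\Z
\]
is, up to the identifications of Corollary \ref{coro:agree} and the isomorphism $j^h(U\Lambda)\cong j^h(\Lambda)$, the same as reading off the image of $\theta_\Lambda$ in $kk(\ell\oplus\ell,M_2)$. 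Since $\theta_0\mapsto 1$, both $\Z$-components must equal $1$, and the square commutes.

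The main obstacle is the identification in the previous paragraph. Concretely, one must verify that the twofold use of Lemma \ref{lem:ul-vlo}'s iso matches the natural maps $U'\to\inv(U')\to\Sigma\inv(U')$ hidden in Karoubi's chain, taking into account that Karoubi's $U'$ satisfies $j^h(U'\ell)\cong j^h(U)[-1]$ (Remark \ref{rem:uukar}) so each of the two isomorphisms accounts for one suspension and one ``inv-dressing''. Once this bookkeeping is in place, everything follows from Theorem \ref{thm:fund}(i) and the formal properties of the adjunction $\res\dashv\indu$.
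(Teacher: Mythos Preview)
Your approach is essentially the paper's own: both arguments reduce the commutativity to the normalization property of $\theta_0$ in Theorem \ref{thm:fund}(i). The paper's proof is a single sentence (``$\theta_0$ goes to $[1]\in K_0(\Z)$; it is straightforward from this that $\theta$ goes to $[1]\in KH_0^h(\Lambda)\cong KH_0(\ell)$''), whereas you make the passage through the adjunction of Proposition \ref{prop:adjunction} explicit and land in $kk(\ell\oplus\ell,M_2)\cong KH_0(\ell)^{2}$. That extra bookkeeping is a genuine improvement in clarity.

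There is, however, one step you should tighten. From Theorem \ref{thm:fund}(i) you only obtain the image of $\theta_0$ under a \emph{single} chain terminating in $K_0(\Z)=\Z$, so a priori this pins down only one of the two coordinates in $KH_0(\ell)^{2}$. Your sentence ``both $\Z$-components must equal $1$'' is exactly the point that needs an argument. The cleanest way to close it is to observe that both composites around the square are equivariant for the swap $t:\Lambda\to\Lambda$ of \eqref{eq:twist}: the left vertical $\iota$ and the bottom $*$-isomorphism \eqref{map:lambda-matrix} are visibly $t$-equivariant, while $\theta_\Lambda=\theta\otimes\id_\Lambda$ and the right vertical (built from $\id_{{}_{-1}M_2}\otimes$ the isomorphism of Lemma \ref{lem:ul-vlo}) are natural in the $\Lambda$-slot and hence commute with $t$. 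Under your identification $kk^h(\Lambda,{}_{-1}M_2\Lambda)\cong KH_0(\ell)^{2}$, the action of $t$ interchanges the two summands, so both composites lie on the diagonal $\{(a,a)\}$, and it suffices to compute one coordinate. That single coordinate is precisely what Theorem \ref{thm:fund}(i) gives you. With this symmetry remark added, your plan goes through and matches the paper's argument.
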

\begin{proof} By part i) of Theorem \ref{thm:fund}, we have a commutative diagram in $kk^h$, where as usual we have omitted $j^h$, 
\begin{equation}\label{diag:eldeka}
\xymatrix{\ell\ar[d]^{\iota\eta}\ar[r]^(.4)\theta &{}_{-1}M_2U^2\ar[r]& {}_{-1}M_2\Lambda U\ar[d]^{\wr}\\
                                                   M_2\Lambda \ar[rr]^{\cong}  &&{}_{-1}M_2\Lambda.}
\end{equation}
Let $p=pr_1\circ\tau:\Lambda^2\to\Lambda$; we have 
\begin{equation}\label{map:elp}
p((x_1,x_2)\otimes (x_3,x_4))=(x_1x_3,x_2x_4).
\end{equation}
Tensoring \eqref{diag:eldeka} with $\Lambda$ and composing the resulting vertical maps with those induced by $p$, we get another commutative diagram
\begin{equation}\label{diag:prupi}
\xymatrix{\Lambda\ar[d]^{\iota}\ar[r]^(.41){\theta_\Lambda} &{}_{-1}M_2U^2\Lambda\ar[r]& {}_{-1}M_2\Lambda U\Lambda\ar[d]^{\wr}\\
                                                   M_2\Lambda \ar[rr]^{\cong}  &&{}_{-1}M_2\Lambda.}
\end{equation}
Using \eqref{map:elp} and the fact that the $kk^h$ isomorphism $U\Lambda\cong\Lambda$ is induced by first mapping to $\Lambda^2$ and then applying $p$, we obtain that the following diagram commutes in $kk^h$
\begin{equation}\label{diag:upi}
\xymatrix{U^2\Lambda\ar[r]\ar[d]^{\wr}&\Lambda U\Lambda\ar[dl]^{\wr}\\
           \Lambda}
\end{equation}
Appling ${}_{-1}M_2$ to \eqref{diag:upi} we obtain that the composite ${}_{-1}M_2U^2\Lambda\to {}_{-1}M_2\Lambda$ in diagram \eqref{diag:prupi}
is the map in the diagram of the proposition, finishing the proof. %\edit{composite, map}
\end{proof}

\section{The bivariant 12 term exact sequence}\label{sec:12}

\begin{defi}\label{defi:pre12}
Let $A,B \in \ahas$, $\epsilon\in\ell$ a unitary, ${}_\epsilon kk^h(A,B)$ as in \eqref{defi:kk} and $t$ as in \eqref{eq:twist}. Let $\eta:\ell\to \Lambda$ and $\phi:\Lambda\to {}_1M_2$ be as in \eqref{eta} and \eqref{phi}. Put $\bar{\phi}=j^h(\iota_1)^{-1}\circ j^h(\phi)$. Set%\edit{agregu\'e $)$}
\begin{align*}
    {}_{\epsilon}W(A,B)  &= \coker( {}_{\epsilon}kk^h(A,\Lambda B) \xrightarrow{\bar{\phi}_*} {}_{\epsilon}kk^h(A,B)) \\
    {}_{\epsilon}W'(A,B) &= \ker  ({}_{\epsilon}kk^h(A,B)         \xrightarrow{\eta_*} {}_{\epsilon}kk^h(A,\Lambda B)) \\
    k(A,B)  &= \{ x \in kk^h(A,\Lambda B) : x = t_*x\}/\{x = y + t_*y\} \\
    k'(A,B)  &= \{ x \in kk^h(A,\Lambda B) : x = - t_*x\}/\{x = y - t_*y\}
\end{align*}
If $\epsilon = 1$ we omit it from the notation.%\edit{saqu\'e where} %Note that $k$ and $k'$ do not need the $\epsilon$ prescript
%due to the isomorphism \ref{map:lambda-epsilon}.
\end{defi}
\begin{thm}[cf. \cite{karfund}*{Th\'eor\`eme 4.3}]\label{thm:12}
There is an exact sequence
\[
\begin{tikzcd}[transform shape, nodes={scale=0.8}, column sep={0.4cm}]
   k(A,\Omega B) \ar[r] 
 & {}_{-1}W(A,\Omega^2B) \ar[r] 
 & W'(A,B) \ar[r]
 & k'(A,\Omega B) \ar[r]
 & {}_{-1}W'(A,\Omega B)  \ar[r]
 & {}_{-1}W(A,\Omega B)  \ar[d] \\
   W(A,\Omega B) \ar[u] 
 & W'(A,\Omega B) \ar[l]
 & k'(A,\Omega B) \ar[l]
 & {}_{-1}W'(A,B)  \ar[l] 
 & W(A,\Omega^2 B) \ar[l]
 & k(A,\Omega B) \ar[l]
\end{tikzcd}
\]
\end{thm}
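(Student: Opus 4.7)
The proof follows Karoubi's original argument for \cite{karfund}*{Th\'eor\`eme 4.3}, reinterpreted in the triangulated category $kk^h$. Beyond the triangulated structure of $kk^h$, the new ingredients compared to Karoubi's setting are the distinguished triangles defining $U$ and $V$ (Section \ref{sec:uv}), the bivariant fundamental theorem (Corollary \ref{coro:bivafund}), and Lemma \ref{lem:commutes}, which ensures compatibility between the Karoubi isomorphism $\theta$ and the natural involution $t$ on $\Lambda$.

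The first step is to apply ${}_\epsilon kk^h(A,-)$ to the two distinguished triangles
\[
j^h(UB)\to j^h(\Lambda B)\xrightarrow{\phi_B}j^h(B)\to j^h(UB)[-1],\qquad
j^h(VB)\to j^h(B)\xrightarrow{\eta_B}j^h(\Lambda B)\to j^h(VB)[-1].
\]
Because ${}_\epsilon kk^h(A,\Lambda B)=kk^h(A,\Lambda B)$ by Remark \ref{rem:lambdautoadjoint}, the resulting long exact sequences place ${}_\epsilon W(A,B)$ as $\coker(\phi_{B*})$ and ${}_\epsilon W'(A,B)$ as $\ker(\eta_{B*})$, with ${}_\epsilon kk^h(A,UB)$ and ${}_\epsilon kk^h(A,VB)$ fitting as extensions of these Witt-type groups. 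Invoking Corollary \ref{coro:bivafund} in the form ${}_\epsilon kk^h_n(A,VB)\cong{}_{-\epsilon}kk^h_{n-1}(A,UB)$ splices the LESes for parities $\epsilon$ and $-\epsilon$ into a single $6$-term periodic exact sequence that passes through $kk^h(A,\Lambda B)$ twice per period.

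The second step is to refine this $6$-term sequence into the desired $12$-term one using the $\Z/2$-action of the twist $t:\Lambda\to\Lambda$ on $kk^h(A,\Lambda B)$. The technical core is to verify, using the $(\res,\indu)$-adjunction of Proposition \ref{prop:adjunction}, the equality $\iota_+=\iota_-$ in $\{\ahas\}$ (Lemma \ref{lem:hermstab2}), and Lemma \ref{lem:commutes}, that the compositions $\eta_{B*}\phi_{B*}$ and its rotated companion agree with $1+t_*$ and $1-t_*$ up to appropriate shifts of parity and suspension. Once this is in place, the images and kernels of $\phi_{B*}$ and $\eta_{B*}$ in the $6$-term sequence break up as extensions whose associated graded pieces are precisely $k(A,B)$ and $k'(A,B)$, namely the Tate cohomology groups of the $t$-action. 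Splicing these short exact sequences into the $6$-term sequence yields the $12$-term exact sequence of the theorem, with each of the maps $j,j',c,d,r,\beta$ arising either as a connecting map of one of the original triangle LESes, as a natural inclusion or projection identifying a Witt-type group with a kernel or cokernel, or as a splicing map from the Tate cohomology. The main obstacle will be the careful sign and parity bookkeeping; all of it follows the pattern of Karoubi's original proof once the key identity $\eta_*\phi_*=1+t_*$ (and its companion for $1-t_*$) has been verified in the bivariant setting.
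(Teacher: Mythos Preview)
Your outline is correct and matches the paper's approach: set up the two long exact sequences from the $U$- and $V$-triangles, link them via the fundamental theorem, identify the round-trip map on $kk^h(A,\Lambda B)$ with $1\pm t_*$, and then quote Karoubi's diagram chase. You also cite exactly the right auxiliary results (Corollary \ref{coro:bivafund}, Lemma \ref{lem:commutes}, Remark \ref{rem:lambdautoadjoint}).

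One point deserves sharpening. The identity $\eta_{B*}\phi_{B*}=1+t_*$ on $kk^h(A,\Lambda B)$ is the elementary one (it does not require the fundamental theorem). The identity that actually drives the argument, and the only one the paper verifies explicitly, is not a ``rotated companion'' of $\eta_*\phi_*$ but the composite
\[
kk^h(A,\Omega\Lambda B)\xrightarrow{\partial_*} kk^h(A,VB)\xrightarrow{\theta_*}{}_{-1}kk^h(A,\Omega UB)\xrightarrow{\psi_*}{}_{-1}kk^h(A,\Omega\Lambda B)\cong kk^h(A,\Omega\Lambda B),
\]
where $\partial$ is the boundary of the $V$-triangle, $\theta$ is the Karoubi isomorphism of Theorem \ref{thm:bivafund}, and $\psi$ is the map in the $U$-triangle. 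The paper shows, via the commutative diagram built from Lemma \ref{lem:commutes} and the isomorphism $\tau:\Lambda^2\to\Lambda\oplus\Lambda$ of Lemma \ref{lem:ul-vlo}, that $(\psi\theta\partial)_*=\id-t_*$. This is where Lemma \ref{lem:commutes} is genuinely used; Lemma \ref{lem:hermstab2} is not invoked directly here. Once this identity is in hand, the rest is indeed Karoubi's original splice.
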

\begin{proof}
As above, we omit $j^h$ in our notation. Write $\rho$ for the map obtained upon tensoring the canonical map $U\to\Lambda$ with $\Omega ({}_{-1}M_2)$. Consider the following distinguished triangles in $kk^h$
\[
\begin{tikzcd}
     \Omega\Lambda \ar[r,"\partial"]& V \ar[r] \ar[d,"\theta", "\sim"' {anchor=south, rotate=90}]
     &\ell \ar[r, "\eta"]
     &\Lambda  \\
     \Omega^2 {}_{-1}M_2 \ar[r,"\delta"]& \Omega {}_{-1}M_2U \ar[r,"\rho"]
     &\Omega {}_{-1}M_2\Lambda  \ar[r, "\phi_{\Omega}"]
     &\Omega {}_{-1}M_2.
\end{tikzcd}
\]
Let $\tau:\Lambda^2 \iso \Lambda \oplus \Lambda$ be as in \eqref{map:tau}.%\edit{ref $\tau$} Let $\widetilde{\tau}:\Omega_{-1}M_2\Lambda^2\to \Omega(\Lambda\oplus \Lambda)$ be the composite of the isomorphism \ref{map:lambda-matrix}, the inverse of the corner inclusion, and $\Omega\tau$. Using Lemma \ref{lem:commutes} we get the following commutative diagram in $kk^h$:
\[
\begin{tikzcd}[column sep = 1.25cm]
 \Omega\Lambda \ar[r,"\partial"] \ar[d,"{\Omega\Lambda}\eta"] 
&V            \ar[r, "\theta", "\sim"'] \ar[d,"V  \eta"]
&\Omega {}_{-1}M_2U \ar[r,"\rho"] \ar[d, "{\Omega {}_{-1} M_2 U }\eta"]
&\Omega {}_{-1}M_2\Lambda  \ar[d, "{\Omega ({}_{-1} M_2\Lambda)} \eta"] \\
 \Omega\Lambda^2  \ar[r,"\partial"] \ar[d, " {\Omega }\tau"]
&V\Lambda               \ar[r, "\theta", "\sim"'] \ar[d, "\sim" {anchor=south, rotate=90}]
&\Omega {}_{-1}M_2U\Lambda  \ar[r,"\rho"] \ar[d,"\sim" {anchor=south, rotate=90}]
&\Omega {}_{-1}M_2\Lambda^2  \ar[d,"\widetilde{\tau}"]\\
\Omega(\Lambda  \oplus \Lambda)  \ar[r, "\Omega(\pi_1 - t\pi_2)"]
 &\Omega \Lambda               \ar[r, equal] 
&\Omega \Lambda  \ar[r, "\Omega(\jmath_1-\jmath_2)"']
&\Omega (\Lambda  \oplus \Lambda) .
\end{tikzcd}
\]

A direct computation shows that $\tau\circ(\Lambda\otimes\eta):\Lambda\to\Lambda\oplus\Lambda$ is the diagonal map. Hence from the diagram we get following equality
\begin{equation}\label{eq:compo}
\widetilde{\tau}(\Omega{}_{-1}M_2\eta)\rho\theta\partial
= \Omega((\jmath_1-\jmath_2)) (\pi_1-t \pi_2)(\jmath_1+\jmath_2)).
\end{equation}

Similarly, for $h_{-1}$ as in Examples \ref{exas:hermele} and $\iota_1$ the upper left-hand corner inclusion, we have
\[\widetilde{\tau}({\Omega {}_{-1}M_2 }\eta) =
    (\Omega (\jmath_1+\jmath_2))(\iota_1)^{-1}\ad(1,h_{-1}^{-1}).\]
Therefore, composing both sides of the equality \eqref{eq:compo} on the left with the projection onto the first coordinate, we get
\[ (\iota_1)^{-1}\ad(1,h_{-1}^{-1}) \rho \theta\partial = \Omega((\pi_1 - t \pi_2)(\jmath_1+\jmath_2) = \id-t.\]
Because the $*$-algebras involved in the argument above are flat, for any $B\in\ahas$ we map apply the functor $\otimesl B$ of Example \ref{ex:tenso} to obtain the same identity between $kk^h$ maps $\Omega \Lambda B\to \Omega\Lambda B$. Next apply the functor $kk^h(A,-)$ and use the identification ${}_{-1}kk^h(A,\Lambda B) \cong kk^h(A,\Lambda B)$, to obtain that $\rho\theta\partial$ induces
$\id-t_*$ on $kk^h(A,\Omega \Lambda B)$. Using this together with \eqref{eq:1+t}, the rest of the proof proceeds exactly as in  \cite{karfund}*{Th\'eor\`eme 4.3}.
\end{proof}

\begin{rem}\label{rem:12}%\edit{${}_\epsilon M_2$}
Theorem \ref{intro:12} follows from Theorem \ref{thm:12} applied with $B$ replaced by ${}_\epsilon M_2\Omega^n\Sigma B$ if $n\ge 0$ and by ${}_\epsilon M_2\Sigma^{-n-1}B$ otherwise, using the isomorphism \eqref{-epsilon} and hermitian stability. 
\end{rem}
\begin{rem}\label{rem:13} 
Let $\fC$ and $H:\ahas\to\fC$ be as in Proposition \ref{prop:kkuniv2}. The same argument as in Theorem \ref{thm:12} proves an analogous exact sequence for the groups obtained substituting $H(-)$ for $kk^h(A,-)$ in Definition \ref{defi:pre12}.
\end{rem}
\begin{bibdiv}%\edit{complet\'e datos}
\begin{biblist}
\comment{\bib{lpabook}{book}{
author={Abrams, Gene},
author={Ara, Pere},
author={Siles Molina, Mercedes},
title={Leavitt path algebras}, 
date={2017},
series={Lecture Notes in Math.},
volume={2008},
publisher={Springer},
doi={$10.1007/978-1-4471-7344-1$},
}
}
\bib{bass}{article}{
author={Bass, Hyman},
title={Unitary algebraic $K$-theory},
book={
title={Algebraic $K$-theory III},
editor={Bass, Hyman},
series={Lecture Notes in Math.},
volume={343},
publisher={Springer},
},
pages={57--265},
}
\bib{battikh}{article}{
author={Battikh, Naoufel},
title={Some applications of the fundamental theorem for hermitian $K$-theory},
journal={Missouri J. Math. Sci.},
volume={23},
number={1},
date={2011},
pages={48-64},
}
\comment{
\bib{black}{book}{
AUTHOR = {Blackadar, Bruce},
     TITLE = {{$K$}-theory for operator algebras},
    SERIES = {Mathematical Sciences Research Institute Publications},
    VOLUME = {5},
 PUBLISHER = {Springer-Verlag, New York},
      YEAR = {1986},
     PAGES = {viii+338},
      ISBN = {0-387-96391-X},
   %MRCLASS = {46L80 (18F25 19Kxx 46M20 58G12)},
  review = {\MR{859867}},
       URL = {http://dx.doi.org/10.1007/978-1-4613-9572-0},
}}

\bib{clau}{article}{
author={Clauwens, Frans J. B. J.},
title={The $K$-theory of almost symmetric forms},
booktitle={Topological structures, II (Proc. Sympos. Topology and Geom., Amsterdam, 1978), Part 1},
pages={41–-49}, 
series={Math. Centre Tracts},
volume={115}, 
publisher={Math. Centrum}, 
address={Amsterdam}, 
date={1979},
}
\bib{friendly}{article}{
   author={Corti\~nas, Guillermo},
   title={Algebraic v. topological $K$-theory: a friendly match},
   conference={
      title={Topics in algebraic and topological $K$-theory},
   },
   book={
	    editor={Corti\~nas, Guillermo},
      series={Lecture Notes in Math.},
      volume={2008},
      publisher={Springer, Berlin},
   },
   date={2011},
   pages={103--165},
   review={\MR{2762555}},
}
\bib{corel}{article}{
author={Corti\~nas, Guillermo},
author={Ellis, Eugenia},
title={Isomorphism conjectures with proper coefficients},
journal={J. Pure Appl. Algebra},
volume={218},
year={2014},
pages={1224--1263},
}
\comment{
\bib{jot}{article}{
   author={Corti\~nas, Guillermo},
   author={Rodr\'\i guez, Mar\'\i a Eugenia},
   title={$L^p$-operator algebras associated with oriented graphs},
   journal={J. Operator Theory},
   volume={81},
   date={2019},
   pages={225--254},
   }}
\bib{ct}{article}{
    AUTHOR = {Corti\~nas, Guillermo},
    author={Thom, Andreas},
     TITLE = {Bivariant algebraic {$K$}-theory},
  JOURNAL = {J. Reine Angew. Math.},
    VOLUME = {610},
      YEAR = {2007},
     PAGES = {71--123},
      ISSN = {0075-4102},
       DOI = {10.1515/CRELLE.2007.068},
       URL = {https://doi.org/10.1515/CRELLE.2007.068},
}
\bib{newlook}{article}{
author={Cuntz, Joachim},
title={A new look at $KK$-theory},
journal={$K$-theory},
volume={1},
date={1987},
pages={31--51},
}

\bib{cmr}{book}{
 AUTHOR ={Cuntz, Joachim},
 author={Meyer, Ralf},
 author={Rosenberg, Jonathan M.},
     TITLE = {Topological and bivariant {$K$}-theory},
    SERIES = {Oberwolfach Seminars},
    VOLUME = {36},
 PUBLISHER = {Birkh\"{a}user Verlag, Basel},
      YEAR = {2007},
     PAGES = {xii+262},
      ISBN = {978-3-7643-8398-5},
      }
\bib{GJ}{book}{
author={Goerss, Paul G.},
author={Jardine, John F.},
title={Simplicial homotopy theory},
publisher={Birkh\"auser},
year={1999},
}

			\comment{
\bib{kkg}{article}{
    AUTHOR = {Ellis, Eugenia},
     TITLE = {Equivariant algebraic {$kk$}-theory and adjointness theorems},
   JOURNAL = {J. Algebra},
    VOLUME = {398},
      YEAR = {2014},
     PAGES = {200--226},
      ISSN = {0021-8693},
       DOI = {10.1016/j.jalgebra.2013.09.023},
       URL = {https://doi.org/10.1016/j.jalgebra.2013.09.023},
}
}
\bib{karfund}{article}{
    AUTHOR = {Karoubi, Max},
    title={Le th\'eor\`eme fondamental de la $K$-th\'eorie hermitienne},
    journal={Ann. Math.},
    volume={112},
    year={1980},
    pages={259--282},
}
\bib{kar343}{incollection}{
author={Karoubi, Max},
title={Periodicit\'e de la $K$-th\'eorie hermitienne},
book={
editor={Bass, Hyman},
title={Algebraic $K$-theory III},
series={Lecture Notes in Math.},
publisher={Springer},
volume={343},
},
pages={301--411},
}
\bib{karloca}{article}{
author={Karoubi, Max},
title={Localisation des formes quadratiques II},
journal={Ann. Sci. \'Ec. Norm. Sup\'er. (4)},
volume={8},
number={1},
year={1975},
pages={99--155},
}
\bib{kv2}{article}{
    AUTHOR = {Karoubi, Max},
    author={Villamayor, Orlando},
     TITLE = {{$K$}-th\'{e}orie alg\'{e}brique et {$K$}-th\'{e}orie topologique. {II}},
   JOURNAL = {Math. Scand.},
    VOLUME = {32},
      YEAR = {1973},
     PAGES = {57--86},
      ISSN = {0025-5521},
       DOI = {10.7146/math.scand.a-11446},
       URL = {https://doi.org/10.7146/math.scand.a-11446},
       }
\bib{loday}{article}{
author={Loday, Jean Louis},
title={$K$-th\'eorie alg\'ebrique et repr\'esentations de groupes},
journal={Ann.Sci.\'Ec. Norm. Sup.},
volume={9},
date={1976},
pages={309--377},
}
\comment{
\bib{ojapa}{article}{
    AUTHOR = {Ojanguren, Manuel},
    author={Panin, Ivan},
     TITLE = {The {W}itt group of {L}aurent polynomials},
   JOURNAL = {Enseign. Math. (2)},
    VOLUME = {46},
      YEAR = {2000},
    NUMBER = {3-4},
     PAGES = {361--383},
      ISSN = {0013-8584},
}
\bib{vorst}{article}{
author={Vorst, Ton},
title={Localization of the $K$-theory of polynomial extensions},
journal={Math. Annalen},
volume={244},
date={1979},
pages={33--54},
}
}
\bib{mvsnk}{incollection}{
author={Weibel, Charles A.},
title={Meyer-Vietoris sequences and module structures on $NK_*$},
book={
title={Algebraic $K$-theory},
editor={Stein, Michael R.},
editor={Friedlander, Eric M.},
series={Lecture Notes in Math.},
publisher={Springer},
volume={854},
year={1981},
},
pages={466--493},
}
\bib{kh}{article}{
   author={Weibel, Charles A.},
   title={Homotopy algebraic $K$-theory},
   conference={
      title={Algebraic $K$-theory and algebraic number theory (Honolulu, HI,
      1987)},
   },
   book={
      series={Contemp. Math.},
      volume={83},
      publisher={Amer. Math. Soc.},
      place={Providence, RI},
   },
   date={1989},
   pages={461--488},
%   review={\MR{991991 (90d:18006)}},
}

\end{biblist}
\end{bibdiv} 

\end{document}